\newtheorem{lemma}{Lemma}
\newtheorem{proof}{Proof}
\newtheorem{theorem}{Theorem}
\def\BINS{\mathcal B}
\def\BIN{\mathcal B}
\def\lll{\ell}
\title{\bf Piecewise polynomial approximation of probability density functions with application to uncertainty quantification for stochastic PDEs$^{\mbox{\footnotesize{0}}}$}
\author{%
  Giacomo Capodaglio\footnote{Department of Scientific Computing, Florida State University, Tallahassee, FL 32306-4120, USA}%
  \and Max Gunzburger$^*$
  }
\date{}
\begin{document}

\maketitle
\footnotetext{{\bf{Funding:}} This work was supported by US Air Force Office of Scientific Research grant FA9550-15-1-0001 and by the Sandia National Laboratories contract 1985151.}

\begin{abstract}
The probability density function (PDF) associated with a given set of samples is approximated by a piecewise-linear polynomial constructed with respect to a binning of the sample space. 
The kernel functions are a compactly supported basis for the space of such polynomials, {i.e. finite element hat functions, that} are centered at the bin nodes rather than at the samples, as is the case for the standard kernel density estimation approach. This feature {naturally} provides an approximation that is scalable with respect to the sample size.  On the other hand, unlike other strategies that {use a finite element approach}, the {proposed} approximation does not require the solution of a linear system. In addition, a simple rule that relates the bin size to the sample size eliminates the need for bandwidth selection procedures. 
The {proposed} density estimator has unitary integral, does not require a constraint to enforce positivity, and is consistent. The {proposed} approach is validated through numerical examples in which samples are drawn from  known PDFs. The approach is also used to determine approximations of (unknown) PDFs associated with outputs of interest that depend on the solution of a stochastic partial differential equation.
\end{abstract}

\section{Introduction}
The problem of estimating a probability density function (PDF) associated with a given set of samples is of major relevance in a variety of mathematical and statistical applications; see, e.g.,
\cite{andronova2001objective, botev2010kernel, botev2011generalized, capodaglio2018approximation, criminisi2012decision, gerber2014predicting, izenman1991review, silverman2018density, zivkovic2006efficient}. Histograms are perhaps the most popular means used in practice for this purpose. A histogram is a piecewise-constant approximation of an unknown PDF that is based on the subdivision of the sample space into subdomains that are commonly referred as bins. For simplicity, consider the case in which all bins have equal volume. The value of the histogram on any bin is given by the number of samples that lie within that bin along with a global scaling applied to ensure that the histogram has unitary integral. As an example, assume that a bounded one-dimensional sample domain $\Gamma$ has been discretized into a set $\BINS^\delta=\{\BIN_\lll\}_{\lll=1}^{N_{bins}}$ of $N_{bins}$ non-overlapping, covering, bins (i.e., intervals) of length $\delta$. Assume also that one has in hand $M$ samples $\{Y_m\}_{m=1}^M$ of an unknown PDF $f(Y)$. For every $\BIN_\lll\in{\BINS^\delta}$, the value of the histogram function ${f}^{histo}_{\delta,M}(Y)$ on $\BIN_\lll$ is given by 
\begin{align}\label{histo_def}
 {f}^{histo}_{\delta,M}(Y)\big|_{\BIN_\lll} = \dfrac{1}{\delta M} \sum_{m=1}^{M} {\mathcal X}_{\BIN_\lll}(Y_m),
 \quad \lll=1,\ldots,N_{bins},
\end{align}
where $ {\mathcal X}_{\BIN_\lll}(Y)$ denotes the indicator function for $\BIN_\lll$. The above formula easily generalizes to higher-dimensional sample domains. Although the histogram is probably the easiest, with regards to implementation, means for estimating a PDF, it suffers from some limitations. For instance, being only a piecewise-constant approximation, it is discontinuous across bin boundaries and also the use of piecewise-constant approximations severely limits the discretization accuracy one can achieve.

A {popular} alternative method capable of overcoming the differentiability issue is {\em kernel density estimation} (KDE) for which the PDF is approximated by a sum of {\em kernel} functions ${\mathcal K}(\cdot)$ centered at the samples so that a desired smoothness of the approximation can be obtained \cite{izenman1991review}. Considering again a one-dimensional sample domain $\Gamma$, the KDE approximation is defined as
\begin{align}\label{KDE}
 {f}^{kde}(Y) =  \dfrac{1}{b M} \sum_{m=1}^{M} {\mathcal K}\Big(\dfrac{Y - Y_m}{b}\Big),
\end{align}
where $b$, which is referred to as the bandwidth, usually governs the decay rate of the kernel as $|Y-Y_m|$ increases. KDE approximations have been shown to be effective in a variety of applications; see, e.g.,  \cite{gerber2014predicting, lopez2015multi, xie2008kernel,  xu2015estimating}. {A limitation of KDE is that the choice of the bandwidth strongly affects the accuracy \cite{heidenreich2013bandwidth,turlach1993bandwidth} of the approximation}. More important, the {naive KDE method of \eqref{KDE}  } does not scale well with the dimension $M$ of the sample data set, i.e., for a given $Y$, the evaluation of the KDE approximation \eqref{KDE} requires $M$ kernel evaluations so that clearly evaluating \eqref{KDE} becomes more expensive as the value of $M$ grows.
{A way to overcome this issue is performing an appropriate binning of the sample set, as for the histogram, and appropriately transferring the information from the samples to the grid points. The kernel functions are then centered at the grid points and scalability with respect to the sample size can be achieved \cite{fan1994fast,hall1996accuracy}.} 
{Another method related to KDE is the one presented in \cite{hegland2009finite}, which is based on spline smoothing and on a finite element discretization of the estimator. This method {is related to our approach because the PDF is also approximated by a finite element function}. Our method  presents several advantages compared to that of \cite{hegland2009finite}. First, to determine the coefficients, the solution of a linear system is not required. Second, our method only involves the binning size as a smoothing parameter, whereas the one in \cite{hegland2009finite} also requires the treatment of  an additional smoothing parameter $\lambda$. Third, no special constraints have to be introduced to ensure positivity of the PDF approximation. In turn, the method in \cite{hegland2009finite} has features that our approach does not provide such as the ability to match the sample moments up to a certain degree and the possibility of allowing aggregate data.
We note that in \cite{peherstorfer2014density}, sparse-grid basis functions are substituted for the standard finite element basis functions in the method of \cite{hegland2009finite}, allowing for consideration of larger $N_\Gamma$.} 

In our approach, the unknown PDF is approximated by a piecewise-polynomial function, specifically a piecewise-linear polynomial, that is defined, as are histograms, with respect to a subdivision of the sample domain $\Gamma$ into bins. 
{The piecewise-linear approximation we propose is a finite element function obtained as a linear combination of hat functions. The procedure to determine the coefficients of the linear combination is purely algebraic and can be efficiently carried out.}
The approach is consistent in the sense that the approximate PDF converges to the exact PDF with respect to the $L^2(\Gamma)$ norm as the number of samples tends to infinity and the volume of the bins tend to zero. {Moreover, the only smoothing parameter involved is the bin size that can be related heuristically to the sample size by a simple rule.}

The paper is structured as follows. In Section \ref{proposed_method}, the mathematical foundation of our approach is laid out; there, it is shown analytically that the proposed approximation satisfies several requirements needed for it to be considered as a PDF estimator. A numerical investigation of the accuracy and computational costs incurred by our approach is then provided. First, in Section \ref{kpdf}, our method is tested and validated using sample sets associated with different types of known PDFs so that exact errors can be determined. Then, in Section \ref{updf}, the method is applied to the estimation of {\em unknown} PDFs of outputs of interest associated with the solution of a stochastic partial differential equation. Finally, concluding remarks and future work are discussed in Section \ref{conclusions}.

\section{Piecewise-linear polynomial approximations of PDFs}\label{proposed_method}

Let ${\bm Y}$ denote a multivariate random variable belonging to a closed, bounded parameter domain $\Gamma \subset \mathbb{R}^{N_\Gamma}$ which, for simplicity, we assume is a polytope in $\mathbb{R}^{N_\Gamma}$. The probability density function (PDF) $f({\bm Y})$ corresponding to ${\bm Y}$ is not known. However, we assume that we have in hand a data set of $M$ samples ${\bm Y}_m\in\Gamma$, $m=1,\ldots,M$, of ${\bm Y}\in\Gamma$. The goal is to construct an estimator for $f({\bm Y})$ using the given data set $\{{\bm Y}_m\}_{m=1}^M$ of samples. To this end, we use an approximating space that is popular in the finite element community \cite{brenner2007mathematical,ciarlet}.

Let ${\BINS}^\delta=\{\BIN_\lll\}_{\lll=1}^{N_{bins}}$ denote a covering, non-overlapping subdivision of the sample domain $\Gamma$ into ${N_{bins}}$ bins.\footnote{In the partial differential equation (PDE) setting, what we refer to as bins are often referred to as grid cells or finite elements or finite volumes. We instead refer to the subdomains $\{\BIN_\lll\}_{\lll=1}^{N_{bins}}$ as {\em bins} because that is the notation in common use for histograms which we use to compare to our approach. Furthermore, in Section \ref{updf}, we also use finite element grids for spatial discretization of partial differential equations, so that using the notation ``bins'' for parameter domain subdivisions helps us differentiate between subdivisions of parameter and spatial domains. For the same reason, we use $\delta$ instead of $h$ to parametrize parameter bin sizes because $h$ is in common use to parametrize spatial grid sizes.} Here, $\delta$ parametrizes the subdivision and may be taken as, e.g, the largest diameter of any of the bins $\{\BIN_\ell\}$. The bins are chosen to be hyper-quadrilaterals; for example, if $N_\Gamma=2$, they would be quadrilaterals. It is also assumed that the faces of the bins are either also complete faces of abutting bins or part of the boundary of $\Gamma$. From a practical point of view, our considerations are limited to relatively small ${N_\Gamma}$ because ${N_{bins}}={\mathcal O}(1/\delta^{N_\Gamma})$. Detailed discussion about the subdivisions we use can be found in, e.g., \cite{brenner2007mathematical,ciarlet}.

Let $\{\widehat{{\bm Y}}_j\}_{j=1}^{{N_{nodes}}}$ denote the set of nodes, i.e., vertices, of ${\BINS}^\delta$ with ${N_{nodes}}$ denoting the number of nodes of ${\BINS}^\delta$.  Note that we also have that ${N_{nodes}}={\mathcal O}(1/\delta^{N_\Gamma})$. Based on the subdivision ${\BINS}^\delta$, we define the {\em space of continuous piecewise polynomials}
$$
    {\mathcal V}_\delta =\big\{  v\in C(\Gamma) \,\,:\,\,  v|_{\BIN_\ell}\in {\mathcal P}_1(\BIN_\ell)\quad\mbox{for $\ell=1,\ldots,N_{bins}$} \big\},
$$
where, 
for hyper-quadrilateral elements, ${\mathcal P}_1(\cdot)$ denotes the space of {\em $N_\Gamma$-linear} polynomials, e.g., bilinear and trilinear polynomials in two and three dimensions, respectively. 

A basis $\{\phi_j({\bm Y})\}_{j=1}^{N_{nodes}}$ for ${\mathcal V}_\delta$ is given by, for $j=1,\ldots,N_{nodes}$,   
$$
    \phi_j({\bm Y}) = 
    \big\{ \phi_j({\bm Y})\in {\mathcal V}_\delta\, :\, \phi_j({\widehat{\bm Y}}_{j'}) = \delta_{jj'} \quad\mbox{for $j'=1,\ldots,N_{nodes}$} \big\},
$$
where $\delta_{jj'}$ denotes the Kronecker delta function. In detail, we have that $\{\phi_j({\bm Y})\}_{j=1}^{N_{nodes}}$ denotes the continuous piecewise-linear or piecewise ${N_\Gamma}$-linear Lagrangian FEM basis corresponding to ${\BINS}^\delta$, i.e, we have that, for $j=1,\ldots,{N_{nodes}}$,
\begin{itemize}
\item[--]for hyper-quadrilateral bins, $\phi_j({\bm Y})$ is an ${N_\Gamma}$-linear function on each bin $\BIN_\ell$, $\ell=1,\ldots,{N_{bins}}$,  e.g., for ${N_\Gamma}=\{1,2,3\}$, a linear, bilinear, or trilinear function, respectively;
\item[--]$\phi_j({\bm Y})$ is continuous on $\Gamma$;
\item[--]$\phi_j(\widehat{{\bm Y}}_{j})=1$ at the $j$-th node $\widehat{{\bm Y}}_{j}$ of the subdivision ${\BINS}^\delta$; and
\item[--]if $j'\ne j$, $\phi_j(\widehat{{\bm Y}}_{j'})=0$ at the $j'$-th node $\widehat{{\bm Y}}_{j'}$ of the subdivision ${\BINS}^\delta$.
\end{itemize}
For $j=1,\ldots,{N_{nodes}}$, let $S_j({\bm Y})=\mbox{\em support}\,\{\phi_j({\bm Y})\}\subset\Gamma$ and let $V_j=\mbox{\em volume}\,\{S_j({\bm Y})\}$; note that $S_j({\bm Y})$ consists of the union of the bins $\BIN_\ell\in{\BINS}^\delta$ having the node $\widehat{{\bm Y}}_j$ as one of its vertices. Thus, the basis functions have compact support with respect to $\Gamma$.  An illustration of the basis functions in one dimension is given in Figure \ref{fig:hatfunctions}. We further let $N_{bins,j}$, for $j=1,\ldots,N_{nodes}$, denote the number of bins in $S_j({\bm Y})$, i.e., the number of bins that share the vertex $\widehat{{\bm Y}}_{j}$.

\begin{figure}[h!]
\centerline{ \includegraphics[width=3.0in]{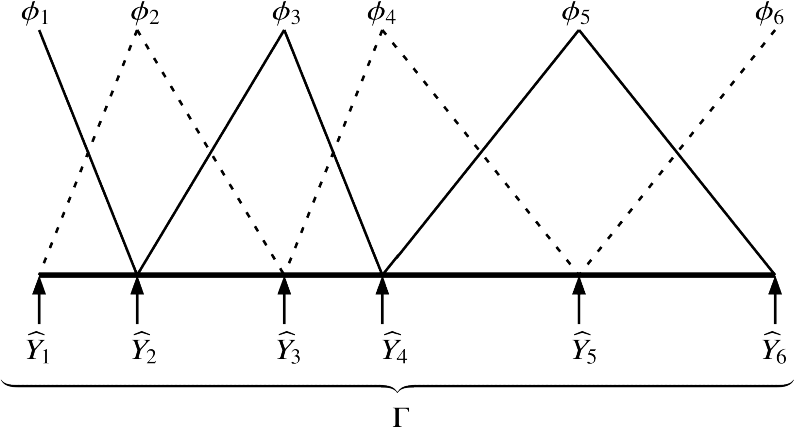}}
\caption{\em The set of basis function $\{\phi_j({\bm Y})\}_{j=1}^{6}$ for the case of $N_{nodes}=6$ in one dimension. Note that the support of the basis functions is limited to the two intervals that contain the corresponding node. Here, the number of bins is $N_{bins}=5$ and the number of bins $N_{bins,j}$ in the support of the basis functions $\phi_j(Y)$ is one for $j=1,6$ and two for $j=2,3,4,5$.}
\label{fig:hatfunctions}
\end{figure}

Note that the approximating space ${\mathcal V}_\delta$ and the basis $\{\phi_j({\bm Y})\}_{j=1}^{N_{nodes}}$ are in common use for the finite element discretization or partial differential equations. Details about the geometric subdivision ${\BINS}^\delta$, the approximation space ${\mathcal V}_\delta$, and the basis functions $\{\phi_j({\bm Y})\}$ and their properties may be found in, e.g., \cite{brenner2007mathematical,ciarlet}.

Below we make use of two properties of the basis $\{\phi_j(\mathbf{Y})\}_{j=1}^{N_{nodes}}$. First, we have the well-known relation
\begin{equation}\label{unity1}
\sum_{j=1}^{N_{nodes}}\,\phi_j(\mathbf{Y})=1  \quad\forall \, \mathbf{Y}\in\Gamma.
\end{equation}
We also have that
\begin{equation}\label{unity2}
C_j = \int_\Gamma \phi_j({\bm Y}) d{\bm Y}=
\int_{S_j({\bm Y})} \phi_j({\bm Y}) d{\bm Y}
=\sum_{{\BIN}_\ell\in S_j({\bm Y})} \int_{{\BIN}_\ell} \phi_j({\bm Y}) d{\bm Y}.
\end{equation}
Note that, in general, $C_j$ is proportional to $V_j$.

\subsection{The piecewise-linear approximation of a PDF}

Given the $M$ samples values $\{{\bm Y}_m\}_{m=1}^M$ in $\Gamma$, we define the approximation $f_{\delta,M}({\bm Y})$ $\in{\mathcal V}_\delta$ of the unknown PDF $f({\bm Y})$ given by   
\begin{equation}\label{proposed_f}
\fbox{$\displaystyle
\begin{aligned}
& 
f({\bm Y}) \approx f_{\delta,M}({\bm Y}) = \sum_{j=1}^{N_{nodes}} F_j \phi_j({\bm Y}) \in{\mathcal V}_\delta,
\\[1ex]&
\qquad\qquad\mbox{where}\quad
 F_j =  \dfrac{1}{MC_j} \,\sum_{{\bm Y}_m\in S_j({\bm Y}) } \phi_j({\bm Y}_m)
\\[1ex]
&
\qquad\qquad\qquad \mbox{with}
 \quad
 C_j = \int_{S_j({\bm Y})} \phi_j({\bm Y}) d{\bm Y},\,\,j=1,\ldots,N_{nodes}.
\end{aligned}
$}
\end{equation}
Note that only the samples ${\bm Y}_m\in S_j({\bm Y})$, i.e., only the samples in the support $S_j({\bm Y})$ of the basis function $\phi_j({\bm Y})$, are used to determine $F_j$. 
{We observe that the proposed estimator can be regarded as a kernel density estimator with linear binning \cite{hall1996accuracy}, where the binning kernel and the kernel associated with a given grid point are equal to the same hat function. With this choice, the smoothing parameter of the kernel becomes the binning parameter $\delta$, so no additional tuning of the bandwidth is necessary.}

Of course, the approximate PDF \eqref{proposed_f} should be a PDF in its own right. That it is indeed a PDF is shown in the following lemma.

\begin{lemma}
$f_{\delta,M}(\mathbf{Y})\ge0$ for all $\mathbf{Y}\in\Gamma$ and $\int_{\Gamma} f_{\delta,M}(\mathbf{Y}) d\mathbf{Y}=1$.

\begin{proof}
Clearly $f_{\delta,M}({\bm Y})$ is non-negative because it is a linear combination of non-negative functions with non-negative coefficients.
\begin{equation}
\begin{aligned}
 \int_{\Gamma} f_{\delta,M}(\mathbf{Y}) d\mathbf{Y} &
 = \sum_{j=1}^{N_{nodes}} F_j  \int_{\Gamma}  \phi_j(\mathbf{Y}) d\mathbf{Y}
\\& = \sum_{j=1}^{N_{nodes}} \dfrac{1}{M \, C_j } \,\sum_{m=1}^M \phi_j(\mathbf{Y}_m) \int_{\Gamma}  \phi_j(\mathbf{Y}) d\mathbf{Y} \\
 &=\dfrac{1}{M} \,\sum_{i=1}^{N_{nodes}}\, \sum_{m=1}^M \phi_j(\mathbf{Y}_m) = \dfrac{1}{M} \, \sum_{m=1}^M 1 = 1.
\end{aligned}
\end{equation}
The third and fourth equalities hold because of \eqref{unity2} and \eqref{unity1}, respectively.
\end{proof}
\end{lemma}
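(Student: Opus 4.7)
The plan is to handle the two assertions separately and exploit two structural facts about the hat-function basis: each $\phi_j$ is non-negative, and the basis forms a partition of unity.

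For non-negativity, I would first argue that each coefficient $F_j$ is non-negative. Since $\phi_j \ge 0$ everywhere on $\Gamma$, every evaluation $\phi_j(\mathbf{Y}_m)$ appearing in the defining sum is non-negative, and the normalizing constant $C_j$ is a positive number (being the integral of a non-negative, not-identically-zero function over a set of positive volume), while $M$ is a positive integer. Hence $F_j \ge 0$. Then $f_{\delta,M}(\mathbf{Y}) = \sum_j F_j \phi_j(\mathbf{Y})$ is a non-negative combination of non-negative functions, giving the first claim.

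For the integral identity, I would compute directly, integrate termwise, and use property \eqref{unity2} to recognize $\int_\Gamma \phi_j \, d\mathbf{Y} = C_j$. This cancels the $C_j$ in the denominator of $F_j$, reducing the expression to $\frac{1}{M}\sum_{j} \sum_{\mathbf{Y}_m \in S_j} \phi_j(\mathbf{Y}_m)$. The only subtle step is recognizing that the restriction $\mathbf{Y}_m \in S_j(\mathbf{Y})$ in the inner sum can be harmlessly dropped: since $\phi_j$ vanishes outside $S_j$, extending the sum to all $m = 1, \dots, M$ adds only zero terms. Then I would swap the order of summation and apply the partition-of-unity identity \eqref{unity1} pointwise at each sample $\mathbf{Y}_m \in \Gamma$ to collapse the inner sum $\sum_j \phi_j(\mathbf{Y}_m)$ to $1$. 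What remains is $\frac{1}{M} \sum_{m=1}^M 1 = 1$.

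Neither step presents a real obstacle; the main thing to get right is the index-juggling in the second part, specifically the observation that extending the restricted sum over samples in $S_j$ to the full sample set is valid precisely because $\phi_j$ has support in $S_j$. This is what makes the partition-of-unity trick applicable and is the only place where the compact support of the basis is implicitly used.
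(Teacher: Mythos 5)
Your proposal is correct and follows essentially the same route as the paper: non-negativity from non-negative coefficients times non-negative basis functions, and the integral identity from termwise integration, cancellation of $C_j$ via \eqref{unity2}, and the partition of unity \eqref{unity1}. The only difference is that you explicitly justify extending the sum over $\mathbf{Y}_m \in S_j(\mathbf{Y})$ to all $m=1,\dots,M$ using the compact support of $\phi_j$, a step the paper performs silently; this is a welcome clarification but not a different argument.
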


The next lemma is useful to prove the convergence of our approximation.
\begin{lemma}\label{bias}
 Let $f \in C^2(\Gamma)$ with $f|_{\partial \Gamma} = 0$ and let $\mathbb{E}[ F_j]$ denote the expectation of  $F_j$ with respect to $f$. Then
 \begin{align}
   \Big|f(\widehat{{\bm Y}}_{j}) - \mathbb{E}[ F_j] \Big| \leq C \delta^{\alpha},
 \end{align}
 where the constant $C$ does not depend on either $\delta$ or $M$, and $\alpha$ is a positive integer.
 If $[-1,1]^{N_{\Gamma}} \cap \Gamma = [-c_1,c_1]^{{N_{\Gamma}}}$ for some positive constant $c_1$, then $\alpha = 2$. Otherwise $\alpha=1$.
\end{lemma}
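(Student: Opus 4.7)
My plan is first to compute $\mathbb{E}[F_j]$ by exploiting that $\{\bm Y_m\}_{m=1}^M$ are i.i.d.\ with density $f$, which yields
\begin{equation*}
\mathbb{E}[F_j] \;=\; \frac{1}{M C_j}\sum_{m=1}^M \mathbb{E}[\phi_j(\bm Y_m)] \;=\; \frac{1}{C_j}\int_{S_j(\bm Y)} \phi_j(\bm Y)\, f(\bm Y)\, d\bm Y.
\end{equation*}
Using \eqref{unity2}, I then rewrite $f(\widehat{\bm Y}_j)$ as $(1/C_j)\int_{S_j(\bm Y)}\phi_j(\bm Y)\, f(\widehat{\bm Y}_j)\, d\bm Y$, which gives the error representation
\begin{equation*}
f(\widehat{\bm Y}_j) - \mathbb{E}[F_j] \;=\; \frac{1}{C_j}\int_{S_j(\bm Y)} \phi_j(\bm Y)\, \bigl[f(\widehat{\bm Y}_j) - f(\bm Y)\bigr]\, d\bm Y.
\end{equation*}
The problem thus reduces to estimating a weighted moment of $f(\widehat{\bm Y}_j) - f(\bm Y)$ against the nonnegative, unit-mass weight $\phi_j/C_j$ supported on $S_j$.

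Next, since $f\in C^2(\Gamma)$, I would Taylor expand $f$ about $\widehat{\bm Y}_j$ to second order:
\begin{equation*}
f(\bm Y) - f(\widehat{\bm Y}_j) \;=\; \nabla f(\widehat{\bm Y}_j)\cdot(\bm Y - \widehat{\bm Y}_j) + R(\bm Y), \qquad |R(\bm Y)| \le \tfrac{1}{2}\|D^2 f\|_{L^\infty(\Gamma)}\|\bm Y - \widehat{\bm Y}_j\|^2.
\end{equation*}
Since $\|\bm Y - \widehat{\bm Y}_j\| \le C\delta$ throughout $S_j$, the remainder always contributes $O(\delta^2)$ after integration, so the decisive term is the linear one, whose contribution is $(1/C_j)\,\nabla f(\widehat{\bm Y}_j)\cdot\int_{S_j(\bm Y)} \phi_j(\bm Y)(\bm Y - \widehat{\bm Y}_j)\, d\bm Y$. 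When $S_j$ is symmetric about $\widehat{\bm Y}_j$ --- the geometric content of the hypothesis $[-1,1]^{N_\Gamma}\cap\Gamma = [-c_1,c_1]^{N_\Gamma}$ interpreted after translation of coordinates to the node, namely the case of an interior node fully surrounded by $2^{N_\Gamma}$ congruent hyper-quadrilateral bins --- $\phi_j$ is even and $(\bm Y - \widehat{\bm Y}_j)$ is odd about the node, so this vector moment vanishes componentwise. Only the quadratic remainder survives and one obtains $\alpha=2$. Without the symmetry, the crude bound $\|\bm Y - \widehat{\bm Y}_j\| \le C\delta$ makes the linear term at worst $O(\delta)$, giving $\alpha=1$.

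The main obstacle is ensuring a single constant that is uniform across all nodes. For boundary nodes $S_j$ is necessarily lopsided, but $f(\widehat{\bm Y}_j)=0$ by the hypothesis $f|_{\partial\Gamma}=0$, and a Taylor expansion of $f$ from the nearest boundary point yields $|f(\bm Y)|\le C\delta$ on $S_j$; combined with the identity for $\mathbb{E}[F_j]$ and the fact that $\phi_j/C_j$ integrates to one, this produces the same $O(\delta)$ bound, consistent with $\alpha=1$. Apart from this bookkeeping, the remaining constants --- $\|D^2 f\|_{L^\infty(\Gamma)}$, the $L^\infty$ norm of $\phi_j$, and the ratio $V_j/C_j$ --- are all independent of $\delta$ and $M$ under the mesh regularity implicit in Section \ref{proposed_method}, so collecting them produces a single $C$ for which the claimed bound holds.
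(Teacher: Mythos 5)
Your proposal is correct and follows essentially the same route as the paper: both identify $\mathbb{E}[F_j]=\frac{1}{C_j}\int_{S_j}\phi_j f$ as a kernel-smoothed value of $f$ at the node and then apply the standard kernel-bias argument, i.e.\ a second-order Taylor expansion about $\widehat{\bm Y}_j$ in which the first-moment term vanishes by symmetry of the hat kernel when the support is a symmetric box (giving $\alpha=2$) and is otherwise bounded crudely by $O(\delta)$. The only difference is presentational: the paper rescales to the reference kernel $\phi\big((\widehat{\bm Y}_j-\bm Y)/\delta\big)$ and cites the standard bias estimate, whereas you carry out the expansion in the original coordinates and treat the boundary nodes (where $C_j\neq\delta^{N_\Gamma}$) explicitly via $f|_{\partial\Gamma}=0$, which is if anything slightly more careful than the paper's blanket assertion that $C_j=\delta^{N_\Gamma}$ for all $j$.
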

\begin{proof}
 Let $\chi_{[-1,1]}$ be the characteristic function of $[-1,1]$ and define $\phi(Y) := (1 - |Y|)\chi_{[-1,1]}$.
 Let $\phi(\mathbf{Y}):= \prod_{n=1}^{N_{\Gamma}} \phi(Y_n) $ be defined in the usual tensor product fashion.
 Assuming $f|_{\partial \Gamma} = 0$, we have $C_j = \delta^{N_{\Gamma}}$ for all $j$.
 Then 
 \begin{align}
  \phi_j(\mathbf{Y}_m) = \phi\Big(\dfrac{\widehat{{\bm Y}}_{j} - \mathbf{Y}_m}{\delta}\Big), \qquad F_j = \dfrac{1}{\delta^{N_{\Gamma}} \, M} \sum_{m=1}^M \phi\Big(\dfrac{\widehat{{\bm Y}}_{j} - \mathbf{Y}_m}{\delta}\Big).
 \end{align}
 $F_j$ is the value of a naive kernel density estimator of $f$ evaluated at $\widehat{{\bm Y}}_{j}$, with the function $\phi$ as a kernel. 
 Using a standard argument for the bias of kernel density estimators we have that
 \begin{equation}
 \begin{aligned}
  \Big|f(\widehat{{\bm Y}}_{j}) - \mathbb{E}[F_j] \Big| \leq &\delta  \Big|\dfrac{\partial f(\widehat{{\bm Y}}_{j})}{\partial \mathbf{Y}}\int_{[-1,1]^{N_{\Gamma}}\cap\Gamma} \phi(\mathbf{Y}^{'}) \mathbf{Y}^{'}d\mathbf{Y}^{'} \Big|\\
  & \mbox{+} \dfrac{\delta^2}{2}  \Big|\int_{[-1,1]^{N_{\Gamma}}\cap\Gamma} {\mathbf{Y}^{'}}^T \dfrac{\partial^2 f(\widehat{{\bm Y}}_{j})}{\partial \mathbf{Y}^2} \mathbf{Y}^{'} d\mathbf{Y}^{'} \Big| + \mathcal{O}(\delta^2).
 \end{aligned}
 \end{equation}
 The above inequality proves the result for $\alpha=1$.
 Thanks to the symmetry of $\phi$, if $[-1,1]^{N_{\Gamma}} \cap \Gamma = [-c_1,c_1]^{N_{\Gamma}}$ for some positive constant $c_1$, then $\int_{[-1,1]^{N_{\Gamma}}\cap\Gamma}  \phi(\mathbf{Y}^{'})\mathbf{Y}^{'}d\mathbf{Y}^{'} = 0$, hence the result follows with $\alpha=2$.
\end{proof}
The next theorem shows that the approximate PDF obtained with our method converges to the exact PDF with respect to the $L^2(\Gamma)$ norm.
\begin{theorem}\label{consistency}
Let $\Gamma$ be a polytope in $\mathbb{R}^{N_\Gamma}$ and $f \in C^2(\Gamma)$ with $f|_{\partial \Gamma} = 0.$ 
If $f_{\delta,M}$ is the approximation of $f$ given in  \eqref{proposed_f}, then:
 $$ \lim_{\delta \rightarrow 0} \lim_{M \rightarrow \infty}  \|f - f_{\delta,M} \|_{L^2(\Gamma)} = 0.$$
\end{theorem}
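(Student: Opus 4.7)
The plan is to insert two deterministic intermediate functions and split the error by the triangle inequality. Let $I_\delta f = \sum_{j=1}^{N_{nodes}} f(\widehat{{\bm Y}}_j)\,\phi_j({\bm Y})$ be the nodal interpolant of $f$ in ${\mathcal V}_\delta$, and let $\widetilde{f}_\delta = \sum_{j=1}^{N_{nodes}} \mathbb{E}[F_j]\,\phi_j({\bm Y})$ be the expected-value version of $f_{\delta,M}$. Then write
$$
\|f - f_{\delta,M}\|_{L^2(\Gamma)} \le \|f - I_\delta f\|_{L^2(\Gamma)} + \|I_\delta f - \widetilde{f}_\delta\|_{L^2(\Gamma)} + \|\widetilde{f}_\delta - f_{\delta,M}\|_{L^2(\Gamma)},
$$
and show that, as $M\to\infty$ and then $\delta\to 0$, the three terms on the right-hand side go to zero in turn.

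The first term is purely deterministic and is controlled by standard finite element interpolation theory: since $f\in C^2(\Gamma)$, one has $\|f - I_\delta f\|_{L^2(\Gamma)} = \mathcal{O}(\delta^2)$ (see, e.g., \cite{brenner2007mathematical,ciarlet}), which vanishes as $\delta\to 0$. The second term reduces to an $L^2$ bound on a finite element function whose nodal coefficients are $f(\widehat{{\bm Y}}_j) - \mathbb{E}[F_j]$. Lemma \ref{bias} provides a uniform pointwise bound $|f(\widehat{{\bm Y}}_j) - \mathbb{E}[F_j]| \le C\delta^{\alpha}$ with $\alpha \ge 1$. Combining this with the standard local $L^2$ estimate $\|\sum_j c_j \phi_j\|_{L^2(\BIN_\ell)}^2 \le C\delta^{N_\Gamma}\sum_{j:\widehat{{\bm Y}}_j\in\BIN_\ell} c_j^2$, summed over the $\mathcal{O}(\delta^{-N_\Gamma})$ bins, yields $\|I_\delta f - \widetilde{f}_\delta\|_{L^2(\Gamma)} = \mathcal{O}(\delta^{\alpha})$, which also vanishes as $\delta\to 0$.

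The third term is the only stochastic contribution. For fixed $\delta$, the number of nodes $N_{nodes}$ is finite, and each coefficient $F_j$ is the average of $M$ i.i.d.\ evaluations of the bounded function $\phi_j({\bm Y})/C_j$ (times the indicator of $S_j$). The strong law of large numbers gives $F_j \to \mathbb{E}[F_j]$ almost surely, and a variance computation shows $\mathbb{E}[(F_j - \mathbb{E}[F_j])^2] = \mathcal{O}(1/M)$. Taking the $L^2$ norm of the finite element function $\widetilde{f}_\delta - f_{\delta,M} = \sum_j (\mathbb{E}[F_j] - F_j)\phi_j$ and using again the local $L^2$ estimate gives $\mathbb{E}\|\widetilde{f}_\delta - f_{\delta,M}\|_{L^2(\Gamma)}^2 \to 0$ as $M\to\infty$ for each fixed $\delta$. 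The iterated limit in the statement (first $M\to\infty$, then $\delta\to 0$) then kills the stochastic term first and the two deterministic terms second, completing the proof.

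The main obstacle I anticipate is the second term: Lemma \ref{bias} is stated for an interior situation in which $C_j = \delta^{N_\Gamma}$, so care is needed at nodes whose support touches $\partial\Gamma$, where $C_j$ and the support volume are smaller and the Taylor expansion in Lemma \ref{bias} no longer uses a symmetric domain of integration. Since $f|_{\partial\Gamma}=0$, the Taylor remainder at boundary nodes is still controlled (at the potentially worse rate $\alpha=1$), but packaging this cleanly — together with bookkeeping that the number of boundary nodes is $\mathcal{O}(\delta^{-(N_\Gamma-1)})$ rather than $\mathcal{O}(\delta^{-N_\Gamma})$ — is the main technical step that goes beyond simply quoting \cite{brenner2007mathematical,ciarlet} and invoking Lemma \ref{bias}.
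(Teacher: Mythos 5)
Your proposal is correct and follows essentially the same route as the paper: the paper also inserts the nodal interpolant $\mathcal{I}_\delta f$, controls $\|f-\mathcal{I}_\delta f\|_{L^2(\Gamma)}$ by standard interpolation theory, and then splits the coefficient error $|f(\widehat{{\bm Y}}_j)-F_j|$ into the bias part handled by Lemma \ref{bias} and the sampling part handled by a variance bound $\sigma(\phi_j)/(\delta^{N_\Gamma}\sqrt{M})$ with $\sigma(\phi_j)\le C\sqrt{\delta^{N_\Gamma}}$ --- your explicit intermediate function $\widetilde{f}_\delta=\sum_j\mathbb{E}[F_j]\phi_j$ is just the paper's same two-way split written as a three-term triangle inequality. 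Your closing remark about boundary nodes (where $C_j\neq\delta^{N_\Gamma}$ and Lemma \ref{bias} loses symmetry) identifies a point the paper glosses over by invoking $f|_{\partial\Gamma}=0$, so it is a fair caveat rather than a divergence in method.
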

Moreover, if $[-1,1]^{N_{\Gamma}} \cap \Gamma = [-c_1,c_1]^{N_{\Gamma}}$ for some positive constant $c_1$, then 
 $$ \lim_{M \rightarrow \infty}  \|f - f_{\delta,M} \|_{L^2(\Gamma)} \leq C \delta^2,$$
 where $C$ is a constant that does not depend on $\delta$ or $M$.
\begin{proof}
Let $\mathcal{I}_{\delta} f = \sum\limits_{j=1}^{N_{nodes}} f(\widehat{{\bm Y}}_{j}) \phi_j$ be the finite element nodal interpolant of $f$, then
\begin{equation}
\begin{aligned}
    \|f - f_{\delta,M} \|_{L^2(\Gamma)} &\leq \|f - \mathcal{I}_{\delta} f \|_{L^2(\Gamma)} \mbox{+}  \|\mathcal{I}_{\delta} f -  f_{\delta,M} \|_{L^2(\Gamma)}\\
    &\leq C_1\delta^2  \mbox{+}  \|\mathcal{I}_{\delta} f -  f_{\delta,M} \|_{L^2(\Gamma)},
\end{aligned}
\end{equation}
where $C_1$ is a constant that does not depend on $\delta$ \cite{brenner2007mathematical}.
Considering the second term in the above inequality, we have
\begin{equation}
\begin{aligned}
 \|\mathcal{I}_{\delta} f - f_{\delta,M} \|_{L^2(\Gamma)} & \leq \sqrt{\int_{\Gamma} \Big(\sum\limits_{j=1}^{N_{nodes}} | f(\widehat{{\bm Y}}_{j}) - F_j|\phi_j\Big)^2} \\
 & \leq \sqrt{\int_{\Gamma} \Big[\Big(\sum\limits_{j=1}^{N_{nodes}} | f(\widehat{{\bm Y}}_{j}) - \mathbb{E}[F_j]|\phi_j\Big) \mbox{+}  \Big(\sum\limits_{j=1}^{N_{nodes}} | \mathbb{E}[ F_j] - F_j|\phi_j\Big)\Big]^2} \\
 & \leq \sqrt{\int_{\Gamma} \Big[ C_2\delta^{\alpha} \mbox{+}  \Big(\sum\limits_{j=1}^{N_{nodes}} | \mathbb{E}[ F_j] - F_j|\phi_j\Big)\Big]^2}.
\end{aligned}
\end{equation}
The last inequality is obtained using Lemma \ref{bias} and  \eqref{unity1}. 
Considering that $ \mathbb{E}[ F_j] = \mathbb{E}\Big[\dfrac{\phi_j}{\delta^{N_{\Gamma}}}\Big] = \dfrac{1}{\delta^{N_{\Gamma}}} \mathbb{E}\Big[\phi_j\Big]$, we have 
\begin{equation}
\begin{aligned}
  | \mathbb{E}[ F_j] - F_j| &=   \Big| \mathbb{E}\Big[\dfrac{\phi_j}{\delta^{N_{\Gamma}}}\Big] - \dfrac{1}{M}\sum\limits_{m=1}^M \dfrac{\phi_j(\mathbf{Y}_m)}{\delta^{N_{\Gamma}}} \Big| 
\\&  =  \dfrac{1}{\delta^{N_{\Gamma}}}\Big| \mathbb{E}\Big[\phi_j\Big] - \dfrac{1}{M}\sum\limits_{m=1}^M \phi_j(\mathbf{Y}_m) \Big| \leq \dfrac{\sigma(\phi_j)}{\delta^{N_{\Gamma}} \, \sqrt{M}},
\end{aligned}
\end{equation}
where {$\sigma(\phi_j) = \sqrt{\mathbb{E}[\phi_j^2] - \mathbb{E}[\phi_j]^2} \leq C_3\sqrt{\delta^{N_{\Gamma}}}$ for all $j$, with $C_3$ independent of both $\delta$ and $M$.}
Hence
\begin{align}\label{someEq}
   \|f - f_{\delta,M} \|_{L^2(\Gamma)} \leq C_1 \delta^2 \mbox{+}  C_2 \delta^{\alpha} \mbox{+}  \dfrac{{C_3}}{{\sqrt{\delta^{N_{\Gamma}}}} \, \sqrt{M}},
\end{align}
so the first result is obtained.
If $[-1,1]^{N_{\Gamma}} \cap \Gamma = [-c_1,c_1]^{N_{\Gamma}}$ for some positive constant $c_1$, then $\alpha=2$ in  \eqref{someEq}, so the second result also follows taking the limit as $M\rightarrow \infty$. 
 \end{proof}

We note that the numerical examples considered below show that convergence can be obtained even for cases where the PDF is not in $C^2(\Gamma)$, even when the PDF is not differentiable or even continuous.

\subsection{Numerical illustrations}\label{kkpdf}

In Section \ref{kpdf}, we validate our approach by approximating known joint PDFs $f({\bm Y})$. Of course, in comparing approximations to an exact known PDF, we pretend that we have no or very little knowledge about the latter except that we have available $M$ samples of the PDF $\{f({\bm Y}_m)\}_{m=1}^M$ at $M$ points ${\bm Y}_m\in\Gamma$, $m=1,\ldots,M$. 
{For the rest of the paper, whenever $M_1<M_2$, then $M_1 \subset M_2$, meaning that smaller sample sets are obtained as subsets of a larger sample set.}
Comparing with known PDFs allows us to precisely determine errors in the approximation of the PDF determined using our method. Then, in Section \ref{updf}, we use our method to approximate the PDFs of outputs of interest associated with the solution of a stochastic partial differential equation; in that case, the PDF is not known. All computations were performed on a Dell Inspiron 15, 5000 series laptop with the CPU \{Intel(R) Core(TM) i3-4030U CPU\@1.90GHz, 1895 MHz\} and 8 GB of RAM.

Note that in all the numerical examples, $\Gamma$ denotes a sampling domain, i.e., all samples $\{\bm Y_m\}_{m=1}^M$ lie within $\Gamma$. For most cases, $\Gamma$ is also the support domain for the PDF. However, we also consider the case in which the support of the PDF is not known beforehand so that the sampling domain $\Gamma$ is merely assumed to contain, but not be the same as, the support domain. 

For simplicity, the sample space is assumed to be a bounded ${N_\Gamma}$-dimensional box $\Gamma = [a,b]^{N_\Gamma}$ with $a<b$. We subdivide the parameter domain $\Gamma$ into a congruent set of bins ${\BINS}^\delta=\{\BIN_\ell\}_{k=1}^{N_{bins}}$ consisting of ${N_\Gamma}$-dimensional hypercubes of side $\delta=(b-a)/N_\delta$, where $N_\delta$ denotes the number of intervals in the subdivision ${\BINS}^\delta$ of $\Gamma$ along each of the ${N_\Gamma}$ coordinate directions. We then have that the number of bins is given by $N_{bins}=N_\delta^{N_\Gamma}$ and the number of nodes is given by ${N_{nodes}}=(N_\delta$+$1)^{N_\Gamma}$. 
For simplicity, we assume throughout that the components of the random variable ${\bm Y}$ are independently distributed so that the joint PDFs are given as the product of univariate PDFs; our method can also be {applied in a straightforward way} to cases in which the components of ${\bm Y}$ are correlated.

\section{Validation through comparisons with known PDFs}\label{kpdf}
 
In this section, we assume that we have available $M$ samples $\{{\bm Y}_m\}_{m=1}^M$ drawn from a {\em known} PDF $f({\bm Y})$. The error incurred by any approximation ${f}_{approx}({\bm Y})$ of the exact PDF $f({\bm Y})$ is measured by 
\begin{equation}\label{al2e}
 {\mathcal E}_{f^{approx}} = \Big(\dfrac{1}{M}\sum_{m=1}^M \big(f({\bm Y}_m) - {f}^{approx}({\bm Y}_m)\big)^2 \Big)^{1/2}.
\end{equation}
In particular, we use this error measure for our approximation ${f}_{\delta,M}({\bm Y})$ defined in \eqref{proposed_f}. 

The accuracy of approximations of a PDF, be they by histograms or by our method, depends on both $M$ (the number of samples available) and $\delta$ (the length of the bin edges). Thus, $M$ and $\delta$ should be related to each other in such a way that errors {in \eqref{al2e}} due to sampling and bin size are commensurate with each other. Thus, if the bin size errors  {in \eqref{al2e}} are of ${\mathcal O}(\delta^r)$ and the sampling error is of ${\mathcal O}(M^{-1/2})$, we set
\begin{equation}\label{ns}
M = {(b-a)^{2r}}\delta^{-2r} = {{N_\delta}^{2r}}.
\end{equation}
Thus, once $a$ and $b$ are specified, one can choose $N_\delta$ (or equivalently $\delta$) and the value of $M$ is determined by \eqref{ns} or vice versa. Clearly, $M$ increases as $\delta$ decreases. 

For most of the convergence rate illustrations given below, we 
\begin{equation}\label{ns1}
\mbox{choose \quad{$N_\delta=2^{(3-r)k}$},\, $k=1,2,\ldots$,\quad so that\quad ${\delta = \dfrac{(b-a)}{2^{(3-r)k}}}$ \quad and\quad
${M = 2^{2r(3-r)k}}$.}
\end{equation}
Note that neither \eqref{ns} or \eqref{ns1} depend on the dimension $N_\Gamma$ of the parameter domain but, of course, $N_{bins}=N_\delta^{N_\Gamma}$ and $N_{nodes}=(N_{\delta}$ + $1)^{N_\Gamma}$ do. If the variance of the PDF is large, one may want to increase the size of $M$ by multiplying the term $\delta^{-2r}$ in \eqref{ns} by the variance.

The computation of the coefficients $F_j$ defined in \eqref{proposed_f} may be costly if $M$ is large and consequently $\delta$ is small. To improve the computational efficiency, we evaluate a basis function $\phi_j({\bm Y})$ at a sample point ${\bm Y}_m$ only if the point is within the support of $\phi_j({\bm Y})$. However, the determination of the bin $\BIN_\ell\in{\BINS}^\delta$ such that ${\bm Y}_m\in\BIN_\ell$ may be expensive in case of large $M$ and $N_\Gamma>2$. For this task, we employ the efficient point locating algorithm described in \cite{capodaglio2017particle}.


\subsection{A smooth PDF with known support}\label{tgpdf}

For the first example we consider, we ignore the fact that we know the exact PDF we are trying to approximate, but assume we know the support of the PDF so the sampling domain $\Gamma$ is also the support domain. We also use this example to illustrate that the number of Monte Carlo samples needed is independent of the dimension $N_\Gamma$ of the parameter space.

We set $\Gamma=[-5.5,5.5]^{N_\Gamma}$ so that $(b-a)=11$ and assume that the components of the random vector ${\bm Y} = (Y_1, \ldots, Y_{N_\Gamma})$ are independently and identically distributed according to a truncated standard Gaussian PDF so that the joint PDF is given by 
\begin{equation}\label{stdGaussiantrunc}
\begin{aligned}
 &f({\bm Y}) = \prod\limits_{n=1}^{N_\Gamma} \dfrac{1}{\sqrt{2 \pi} C_G}\exp{\Big( -\dfrac{Y_n^2}{2} \Big)}\qquad\mbox{for ${\bm Y}\in\Gamma=[-5.5,5.5]^{N_\Gamma}$}
 \\[-.5ex]
 &\qquad\qquad\mbox{with}\quad C_G = \frac12\big(\mbox{erf}(5.5/\sqrt{2}) - \mbox{erf}(-5.5/\sqrt{2})\big).
\end{aligned}
 \end{equation}
The scaling factor $C_G$ is introduced to insure that we indeed have a PDF, i.e., that the  integral of $f({\bm Y})$ over $\Gamma$ is unity. Note that because the standard deviation of the underlying standard Gaussian PDF is unity, the values of the truncated Gaussian distribution \eqref {stdGaussiantrunc} near the faces of the box $\Gamma=[-5.5,5.5]^{N_\Gamma}$ are very small so that the results of this example are given to a precision such that they would not change if one considers instead the (non-truncated) standard Gaussian distribution. Also note that because the second moment of the standard Gaussian distribution is unity, the absolute error \eqref{al2e} is also very close to the error relative to the given PDF.

Before we use the formula \eqref{ns1} to relate $M$ and $\delta$, we first separately examine the convergence rates with respect to $\delta$ and $M$. To this end, to illustrate the convergence with respect to $\delta$, we set
\begin{align}\label{h_order}
M = 10^7 \quad \mbox{and} \quad \delta=11/2^k \quad \mbox{for} \qquad  k=3,4,5,6, 
\end{align}
so that the error due to sampling is relatively negligible. For illustrating the convergence with respect to $M$, we set
\begin{align}\label{M_order}
\delta = 11/2^8 \quad \mbox{and} \quad M=10^k \qquad \mbox{for} \qquad  k=3,4,5,6, 
\end{align}
so that the error due to the bin size is relatively negligible. The plots for $N_\Gamma=1$ in Figure \ref{order_analytic} illustrate a second-order convergence rate with respect to $\delta$ and a half-order convergence rate with respect to $1/M$.

We now turn to relating $M$ and $\delta$ using the formula \eqref{ns1}. We consider the multivariate truncated standard Gaussian PDF \eqref{stdGaussiantrunc} for $N_\Gamma=1,2,3$. Plots of the error vs. both $\delta$ and $M$ are given in Figure \ref{order_analytic_2} from which we observe, in all cases, the second-order convergence rate with respect $\delta$ and the half-order convergence rate with respect to $1/M$. We also observe that the errors and the number of samples used are largely independent of the value of $N_\Gamma$. A visual comparison of 
the exact truncated standard Gaussian distribution \eqref{stdGaussiantrunc} and its approximation \eqref{proposed_f} for the bivariate case is given in Figure \ref{2D_analytic}.

\begin{figure}[h!]
\centerline{
\includegraphics[scale=0.37]{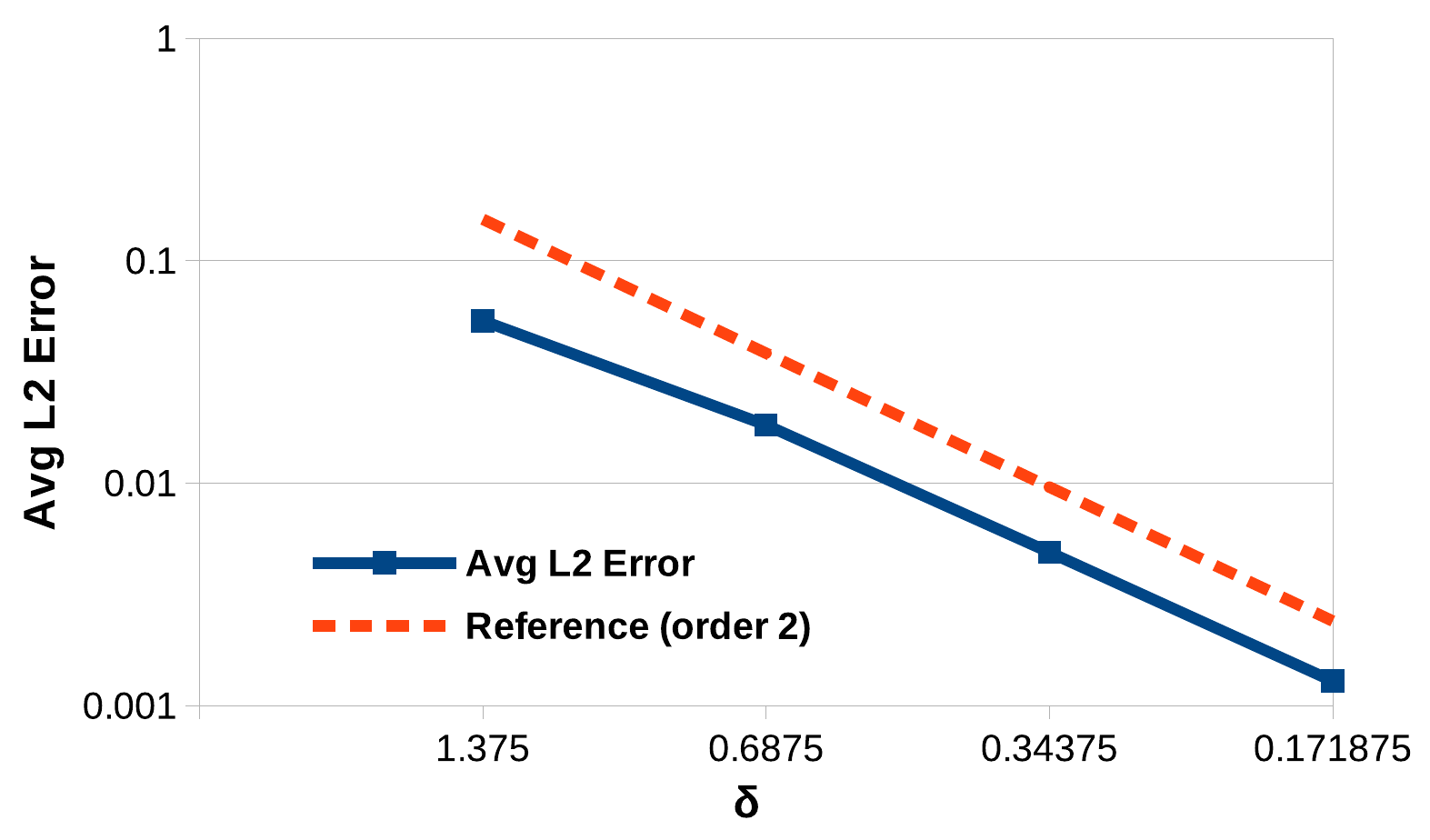}
\quad
\includegraphics[scale=0.37]{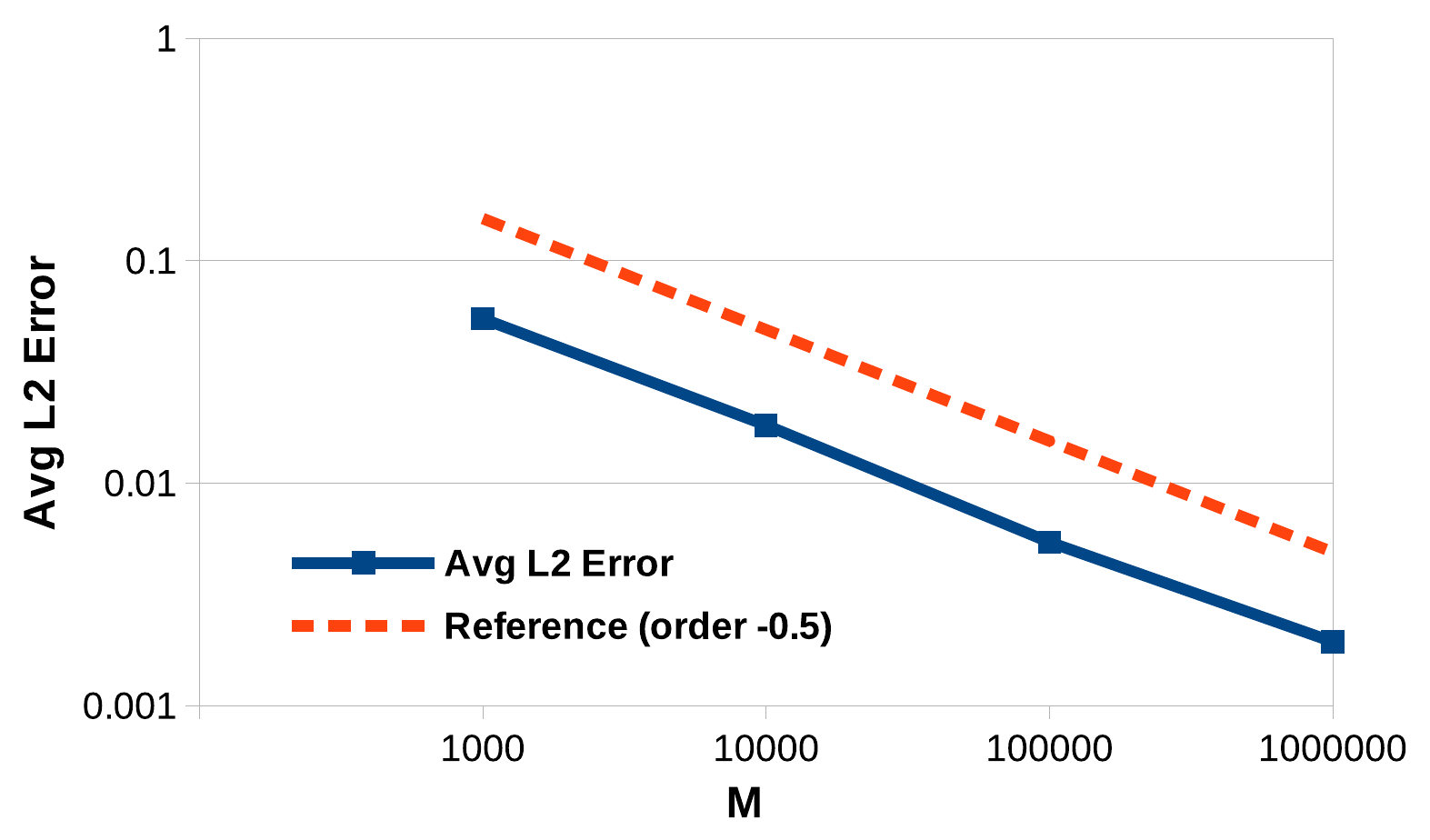}
}
\caption{\em Errors and convergence rates for the approximation \eqref{proposed_f} for the one-dimensional truncated standard Gaussian PDF \eqref{stdGaussiantrunc}. Left: second-order convergence rate with respect to $\delta$ with $M=10^7$. Right: half-order convergence rate with respect to $M$ with $\delta=11/2^8$.}
\label{order_analytic}
\end{figure}

\begin{figure}[h!]
\centerline{
\includegraphics[scale=0.35]{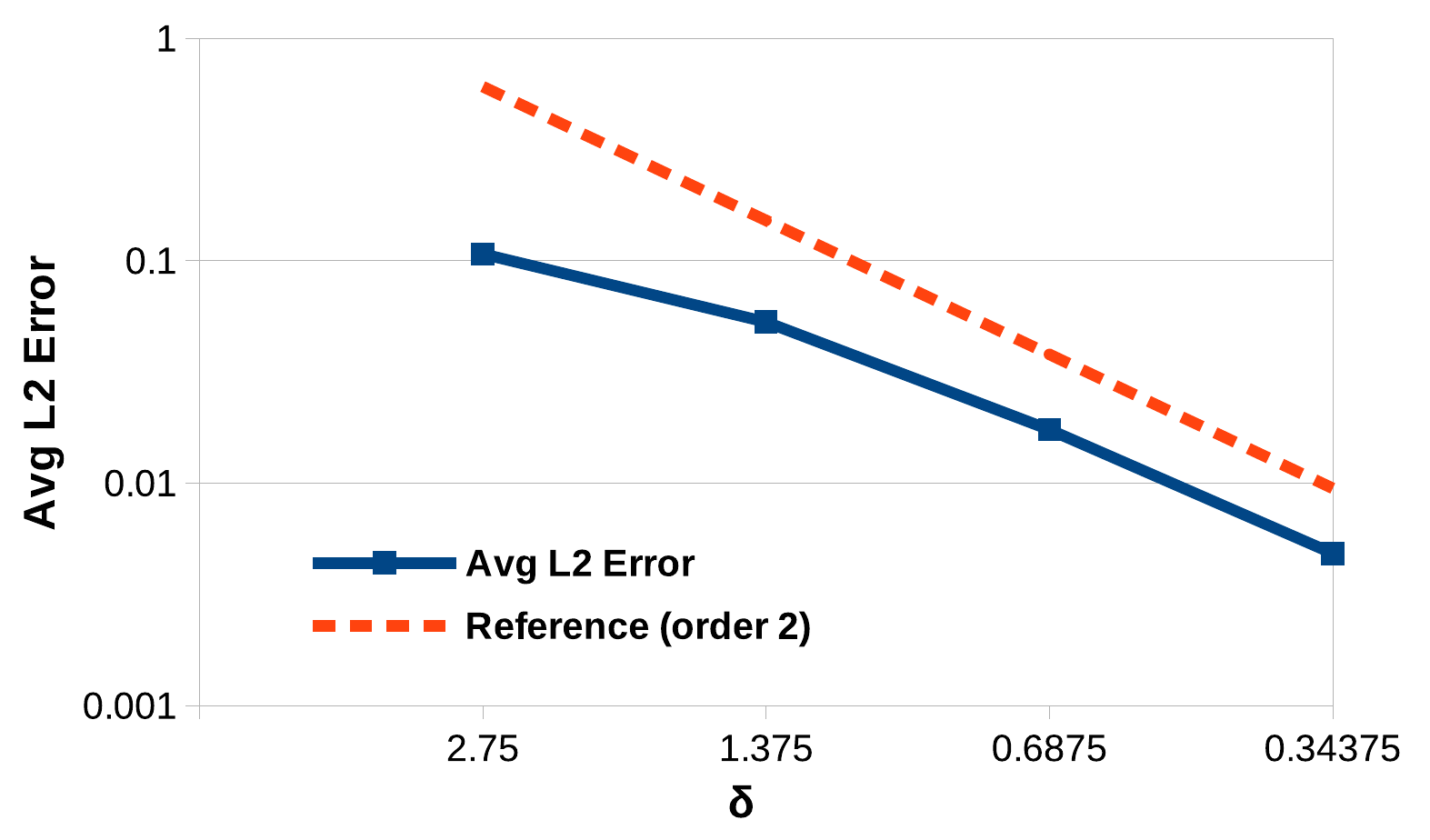} 
\quad
\includegraphics[scale=0.35]{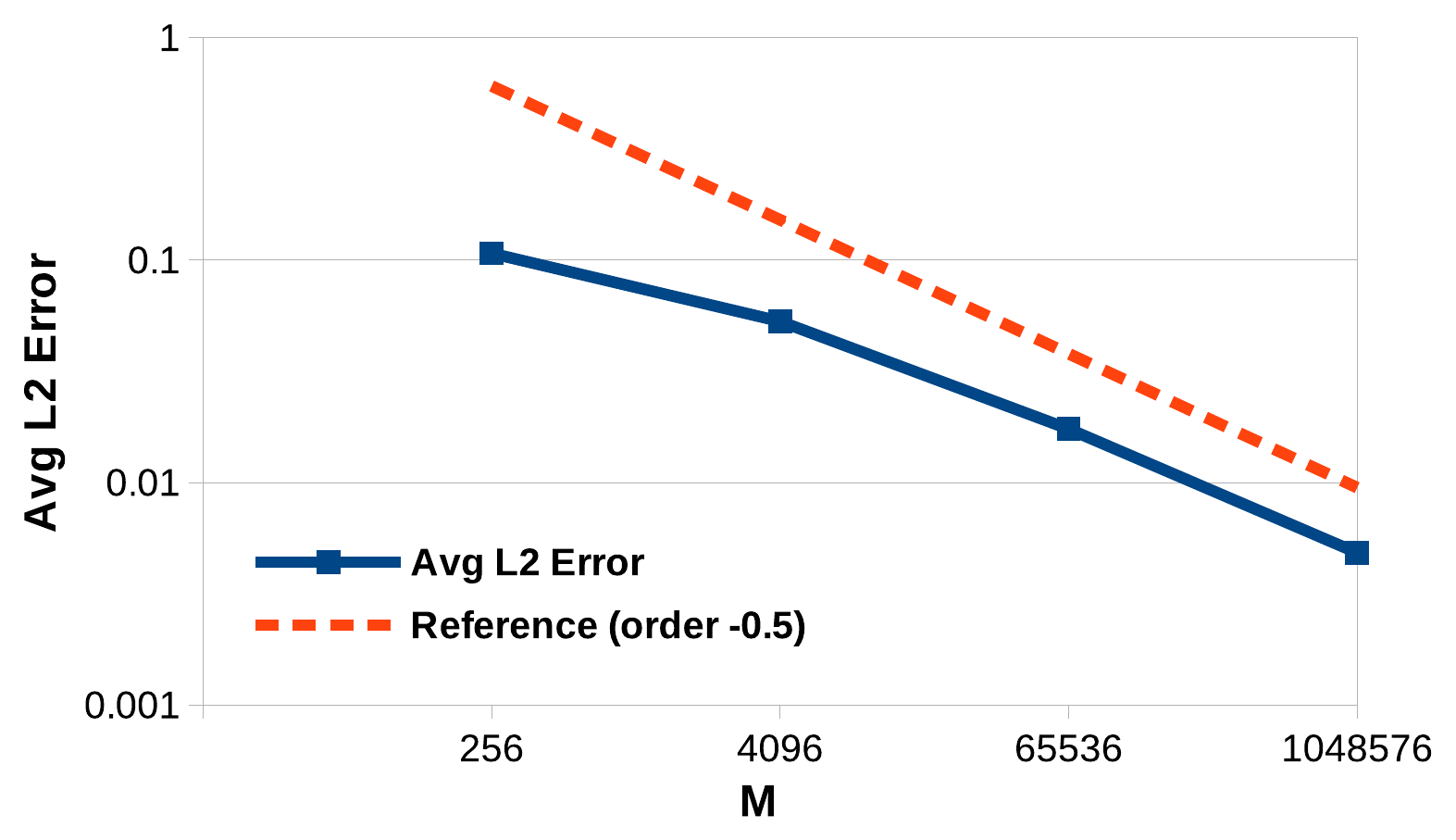}
}
\vskip5pt
\centerline{
\includegraphics[scale=0.35]{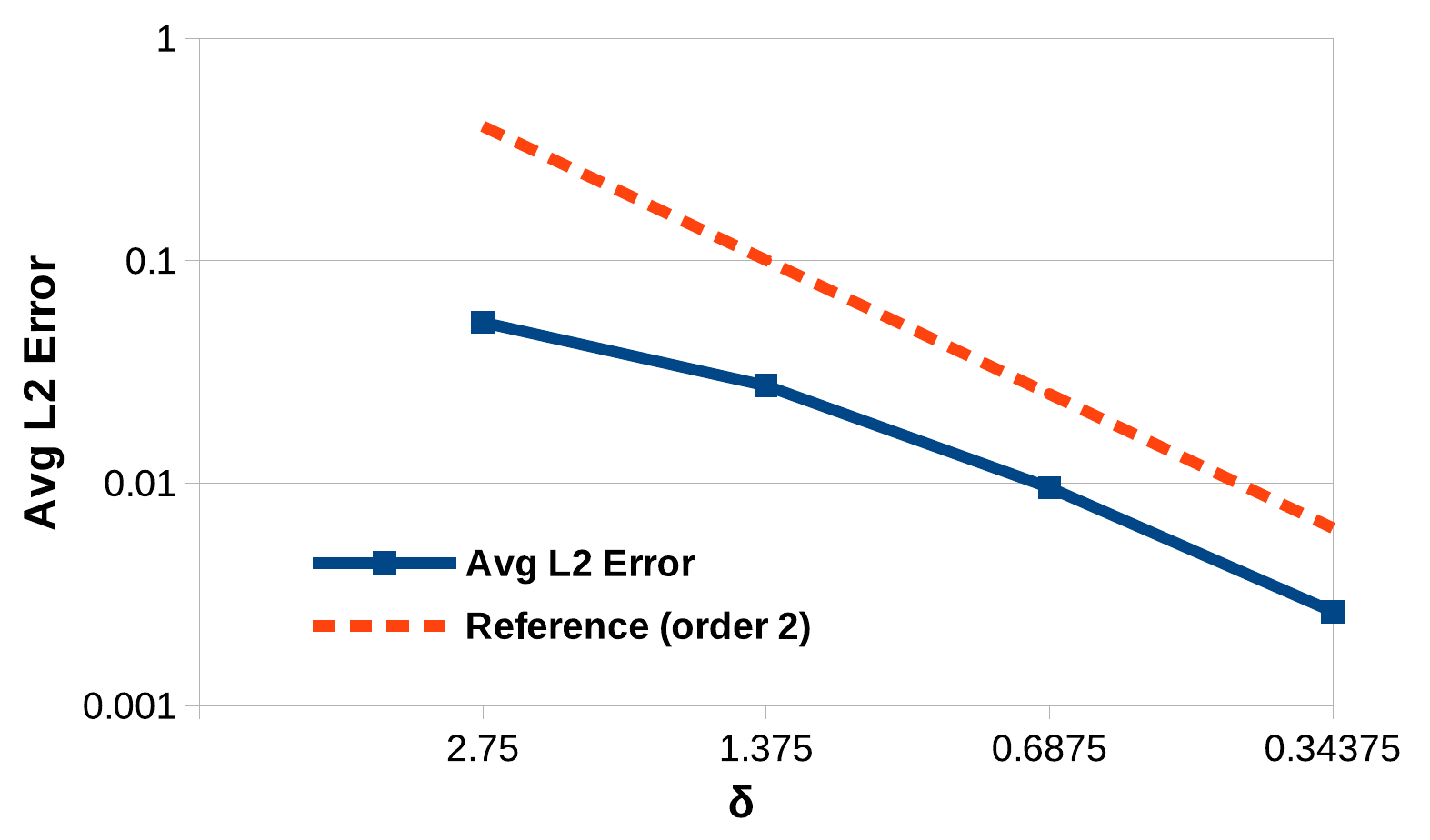} 
\quad
\includegraphics[scale=0.35]{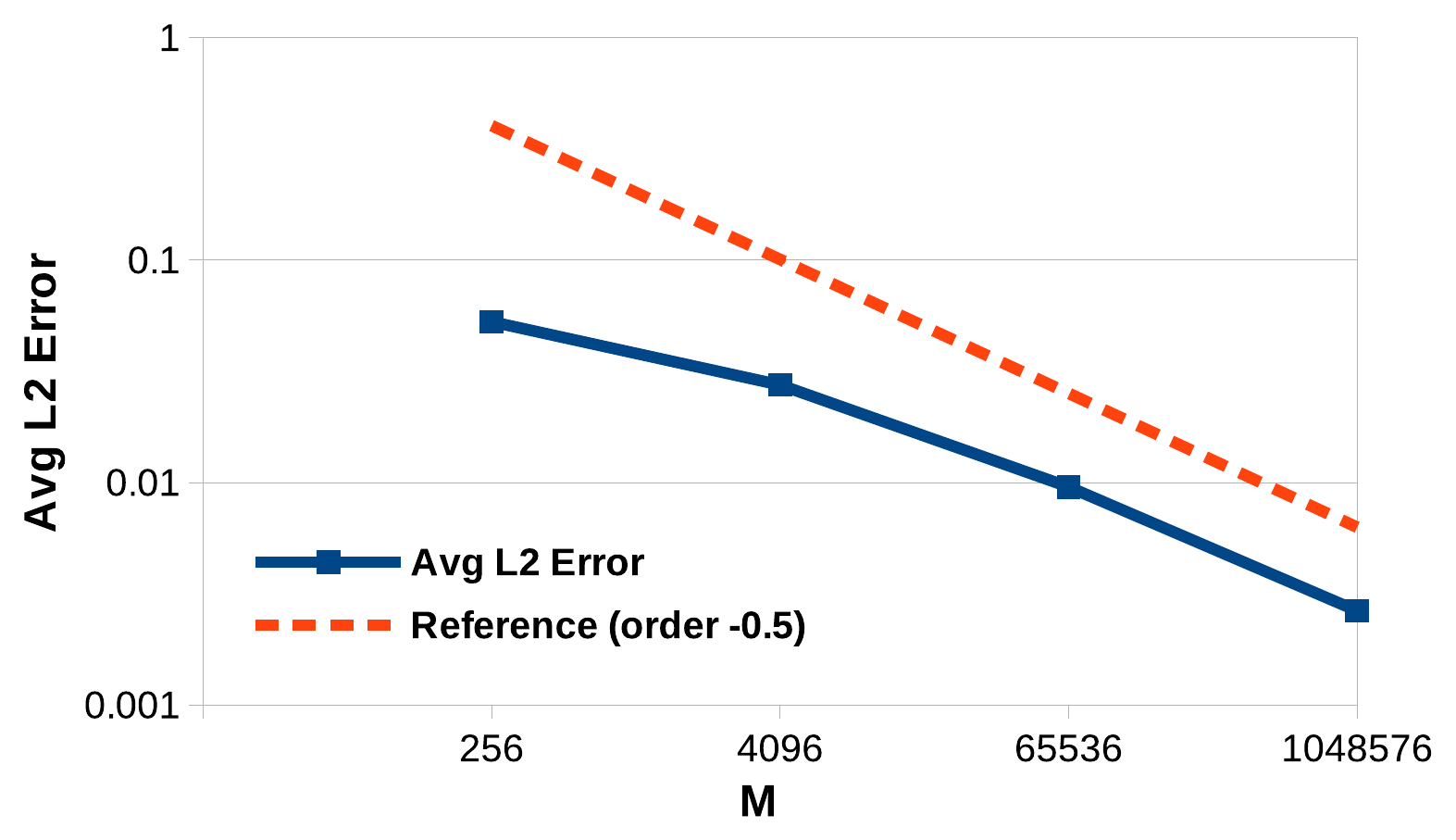}
}
\vskip5pt
\centerline{
\includegraphics[scale=0.35]{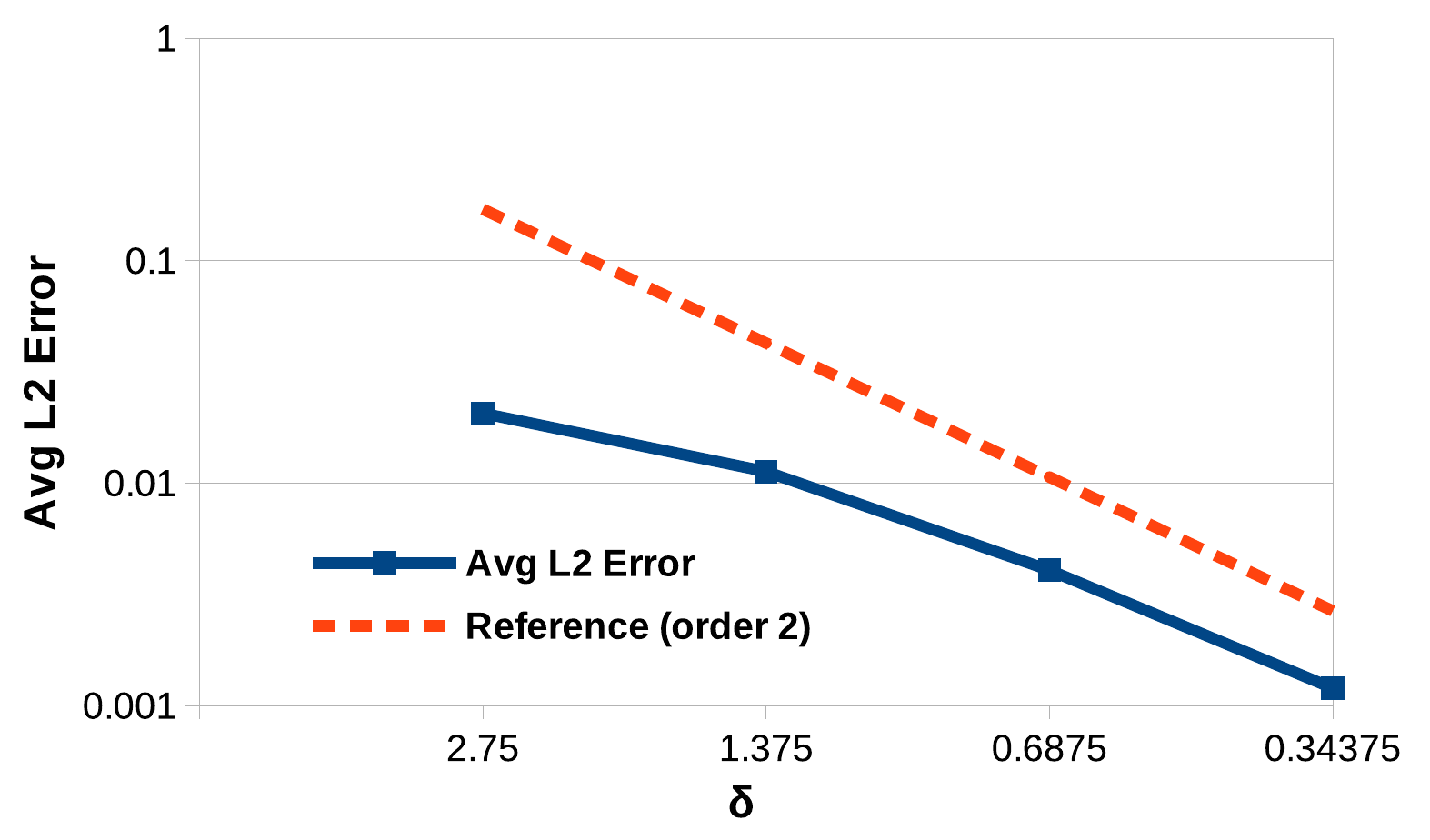} 
\quad
\includegraphics[scale=0.35]{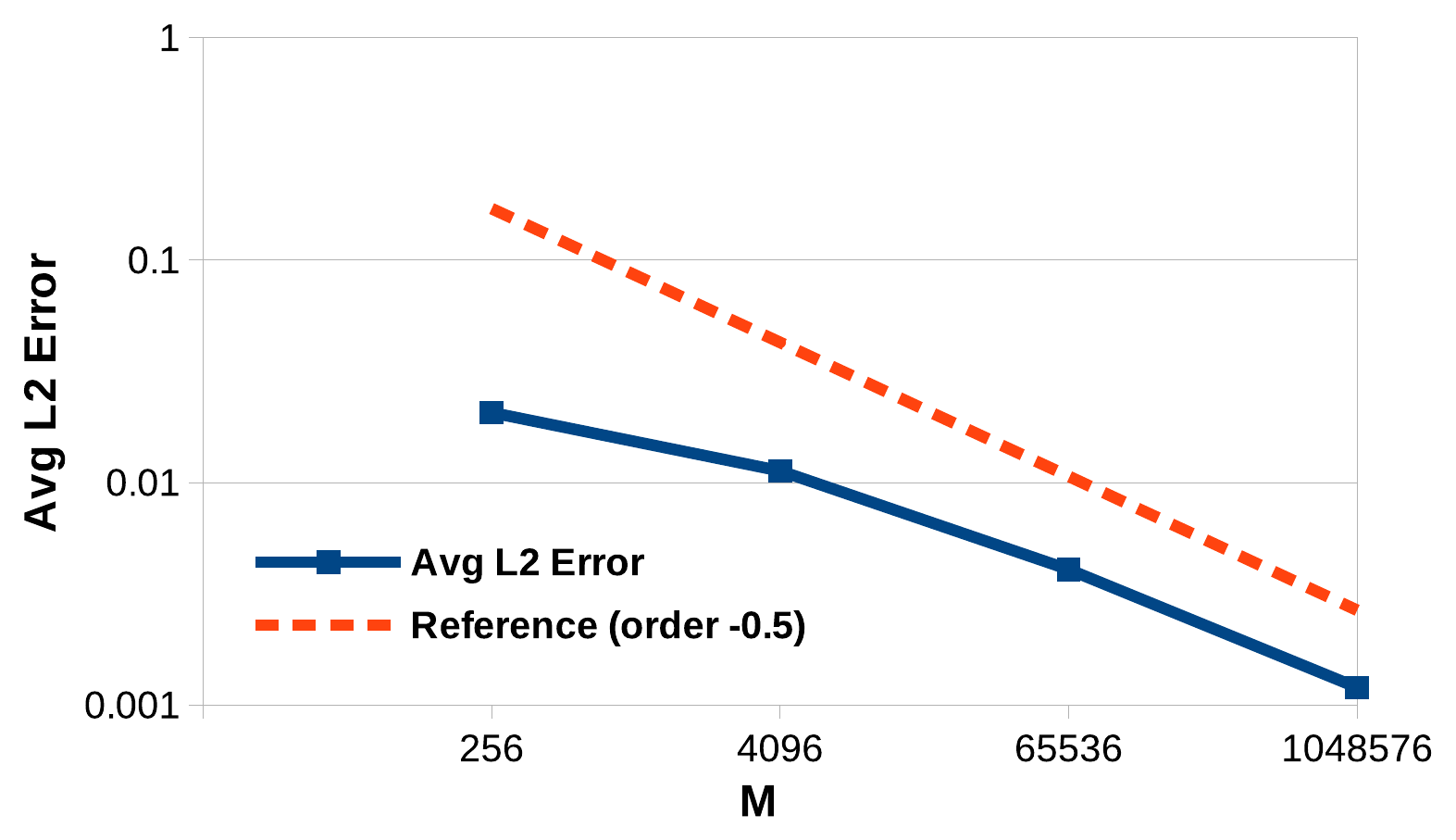}
}
\caption{\em Errors and convergence rates for the approximation \eqref{proposed_f} for the truncated standard Gaussian PDF \eqref{stdGaussiantrunc} with $M$ and $\delta$ related through \eqref{ns1}. Left: convergence rates with respect to $\delta$. Right: convergence rates with respect to $M$. Top to bottom: $N_\Gamma=1,2,3$.
}
\label{order_analytic_2}
\end{figure}

\begin{figure}[h!]
\centerline{
\includegraphics[height=1.5in,width=1.7in]{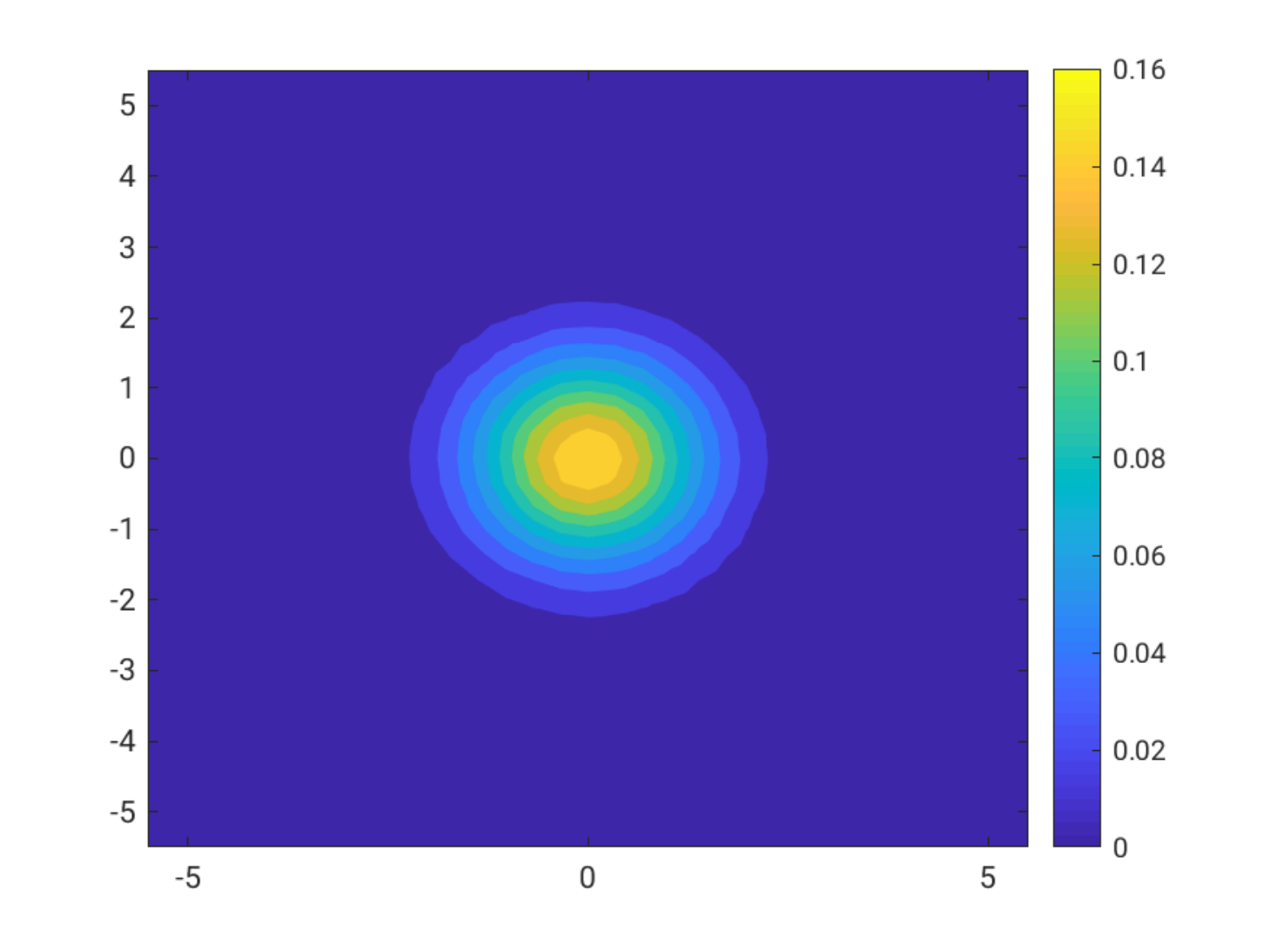}\qquad
\includegraphics[height=1.5in,width=1.33in]{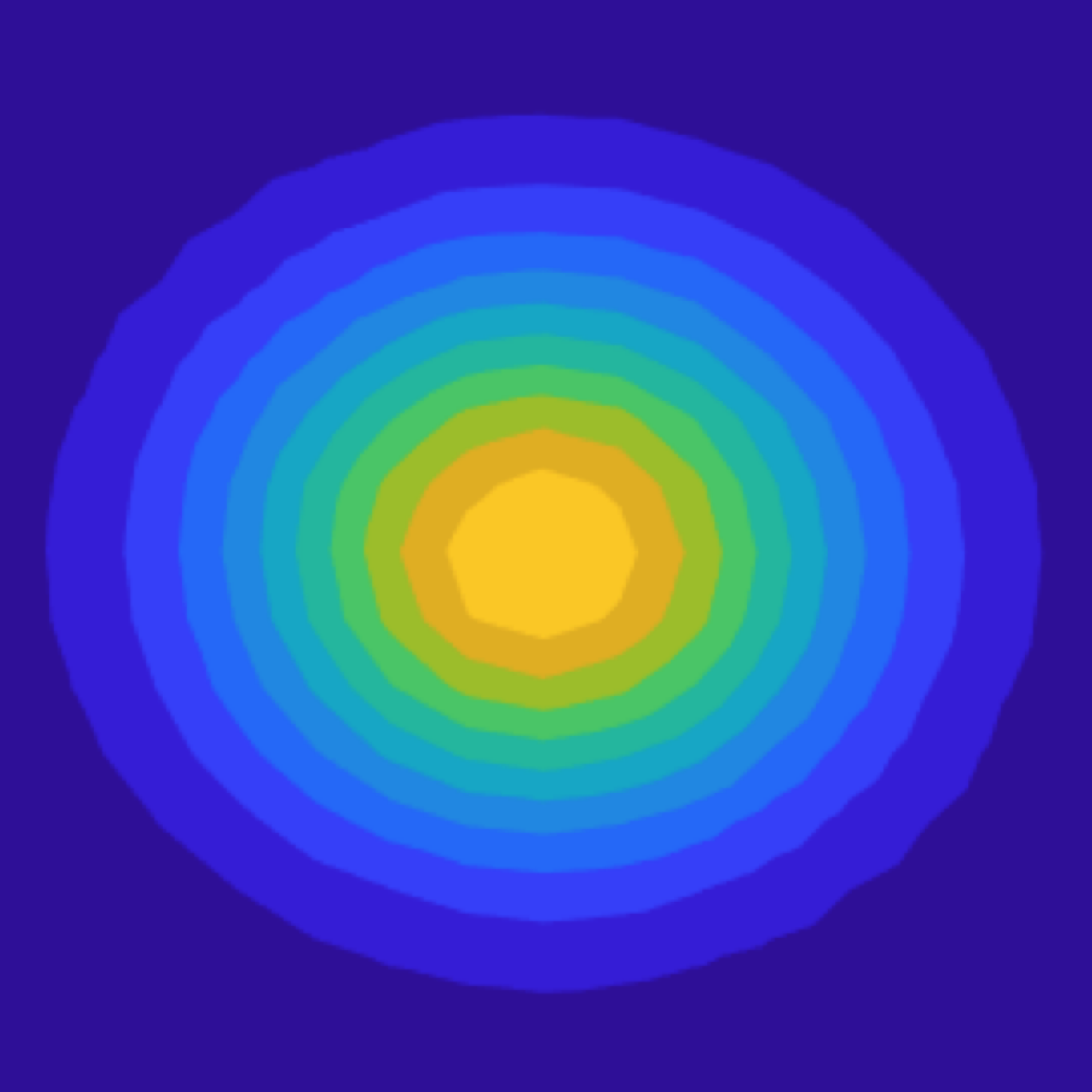}
\includegraphics[height=1.5in,width=1.33in]{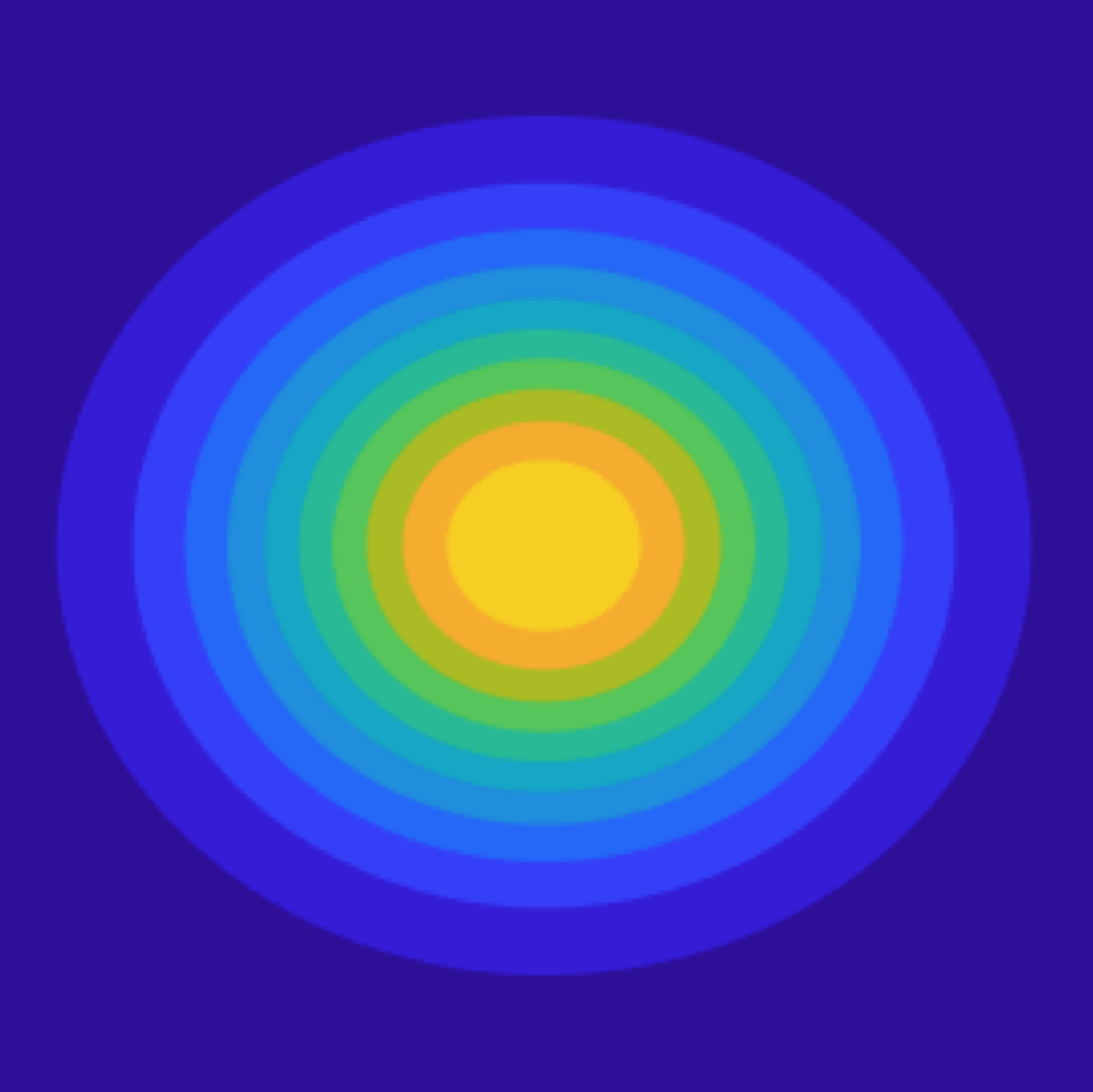}
}
\caption{\em Left: the approximation \eqref{proposed_f} of the bivariate truncated standard Gaussian PDF \eqref{stdGaussiantrunc}. Center: a zoom in of the approximate PDF. Right: a zoom in of the exact PDF. For these plots, $\delta=0.34375=11/2^5$ and $M=1048576=16^5$.}
\label{2D_analytic}
\end{figure}

Computational costs are reported in Table \ref{timeGauss} in which, for each $N_\Gamma=1,2,3$, we choose $k=2,3,4,5$ in \eqref{ns1} to determine $\delta$ and $M$. Reading vertically for each $N_\Gamma$, we see the increase in computational costs due to the decrease in $\delta$ and the related increase in $M$, {although the method scales linearly with respect to the sample size $M$}. Reading horizontally so that $\delta$ and $M$ are fixed, the increase in costs is due to the increasing number of bins and nodes as $N_\Gamma$ increases. We note that our method is amenable to highly scalable parallelization not only as $\delta$ decreases and $M$ increases, but also as $N_\Gamma$ increases so that, through parallelization, our method may prove to be useful in dimensions higher than those considered here. When developing a parallel implementation of our method, using a point locating algorithm such as that of \cite{capodaglio2017particle} to locate a sample on a finite element grid shared by several processors would be crucial to realizing the gains in efficiency due to parallelization.

\begin {table}[!h!]
\caption{\em Computational time (in seconds) for determining the approximation \eqref{proposed_f} of the truncated standard Gaussian PDF \eqref{stdGaussiantrunc}.}
\setlength\tabcolsep{3.25pt} 
\begin{tabular}{|c|c|c|c||c|r||c|r||c|r|} 
\cline{5-10}   
\multicolumn{4}{c|}{} & \multicolumn{6}{c|}{Computation time in seconds}
\\ \hline  
$k$ & $N_\delta$ & $\delta=\frac{11}{N_\delta}$ & $M=2^{4k}$  
    & $N_{bins}$ & ${N_\Gamma}=1$ 
    & $N_{bins}$ & ${N_\Gamma}=2$
    & $N_{bins}$ & ${N_\Gamma}=3$  
\\ 
\hline 
2 & 4  & 2.75    & 256     & 4 & $7.710\mbox{e}{-04}$ & 16 & $3.777\mbox{e}{-02}$ & 64 & $1.190\mbox{e}{-01}$ \\ 
\hline   
3 & 8  & 1.375   & 4096    & 8 & $1.356\mbox{e}{-02}$ & 64 &$5.114\mbox{e}{-01}$ & 512 & $1.955\mbox{e}{-00}$  \\ 
\hline
4 & 16 & 0.6875  & 65536   & 16 &$2.796\mbox{e}{-01}$ & 256 &$8.106\mbox{e}{-00}$ & 4096 & $3.545\mbox{e+01}$  \\ 
\hline
5 & 32 & 0.34375 & 1048576 & 32 &$5.870\mbox{e}{-00}$ & 1024 &$1.281\mbox{e+02}$   & 32768 & $1.013\mbox{e+03}$  \\ 
\hline
\end{tabular}
\label{timeGauss}
\end{table}

\subsection{A smooth PDF with unknown support}
\label{unpdf}
Still considering a known PDF, we now consider a case for which we not only pretend we do not know the PDF, but also we do not know its support. Specifically, we consider the uniform distribution $f(Y)=0.5$ on $[-1,1]$. A univariate distribution suffices for the discussions of this case; multivariate distributions can be handled by the obvious extensions of what is said here about the univariate case. We assume that we know that the support of the known PDF lies within a larger interval $\Gamma$. Of course, we may be mistaken about this so that once we examine the sample set $\{{\bm Y}_m\}_{m=1}^M$, we may observe that some of the samples fall outside of $\Gamma$. In this case we can enlarge the interval $\Gamma$ until we observe that the interval spanned by smallest to largest sample values is contained within the new $\Gamma$. 

We first simply assume that we have determined, either through external knowledge or by the process just described, that the support of the PDF we are considering lies somewhere within the interval $\Gamma=[-1.5,1.5]$. Not knowing the true support, we not only sample in the larger interval $\Gamma$ (so that here we have $(b-a)=3$ and $\delta=3/N_\delta$), but we also build the approximate PDF with respect to $\Gamma$.  We remark that a uniform distribution provides a stern test when the support of the distribution is not known because that distribution is as large at the boundaries of its support as it is in the interior. Distributions that are small near the boundaries of their support, e.g., the truncated Gaussian distribution of Section \ref{tgpdf}, would yield considerably smaller errors and better convergence rates compared to what are obtained for the uniform distribution. Choosing $k=2,3,4,5$ and $r=2$ in \eqref{ns1}, we obtain the errors plotted in Figure \ref{unif2_nogood}. Clearly, the convergence rates are nowhere near optimal. Of course, the reason for this is that by building the approximation with respect to $\Gamma$, we are not approximating the uniform distribution on $[-1,1]$, but instead we are approximating the {\em discontinuous} distribution
$$
      f_{[-1.5,1.5]}(Y) = 
      \left\{\begin{aligned}
      1 & \quad \mbox{for $Y\in[-1,1]$}
      \\
      0 & \quad \mbox{for $Y\in[-1.5,-1]$ and $Y\in[1,1.5]$.}
      \end{aligned}\right.
$$
For comparison purposes we provide, in Figure \ref{unif2}, results for the case where we use the support interval $[-1,1]$ for both sampling and for approximation construction. Because now the PDF is smooth, in fact constant, throughout the interval in which the approximation is constructed, we obtain optimal convergence rates.

\begin{figure}[h!]
\centerline{
\includegraphics[scale=0.37]{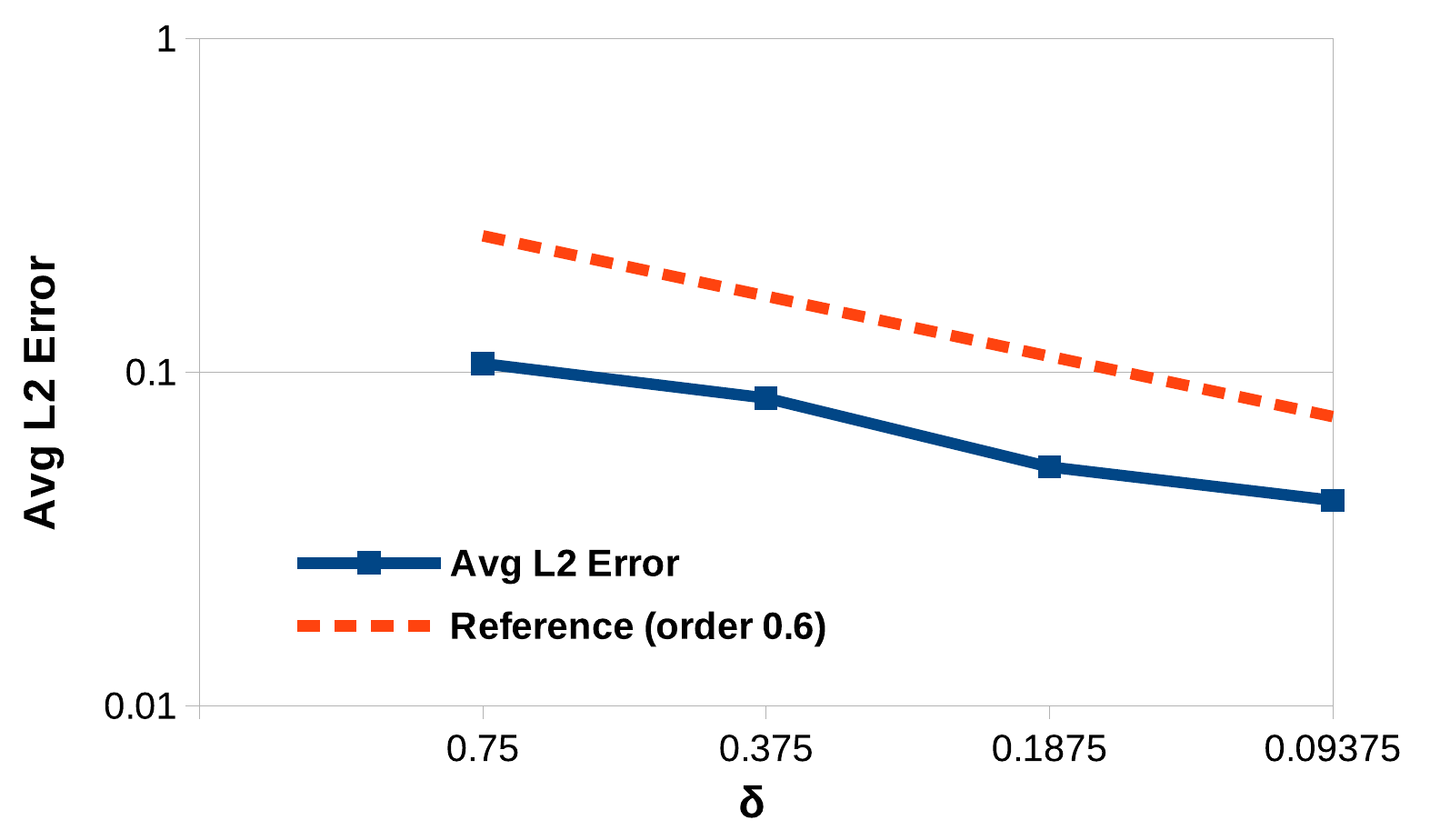}\quad 
\includegraphics[scale=0.37]{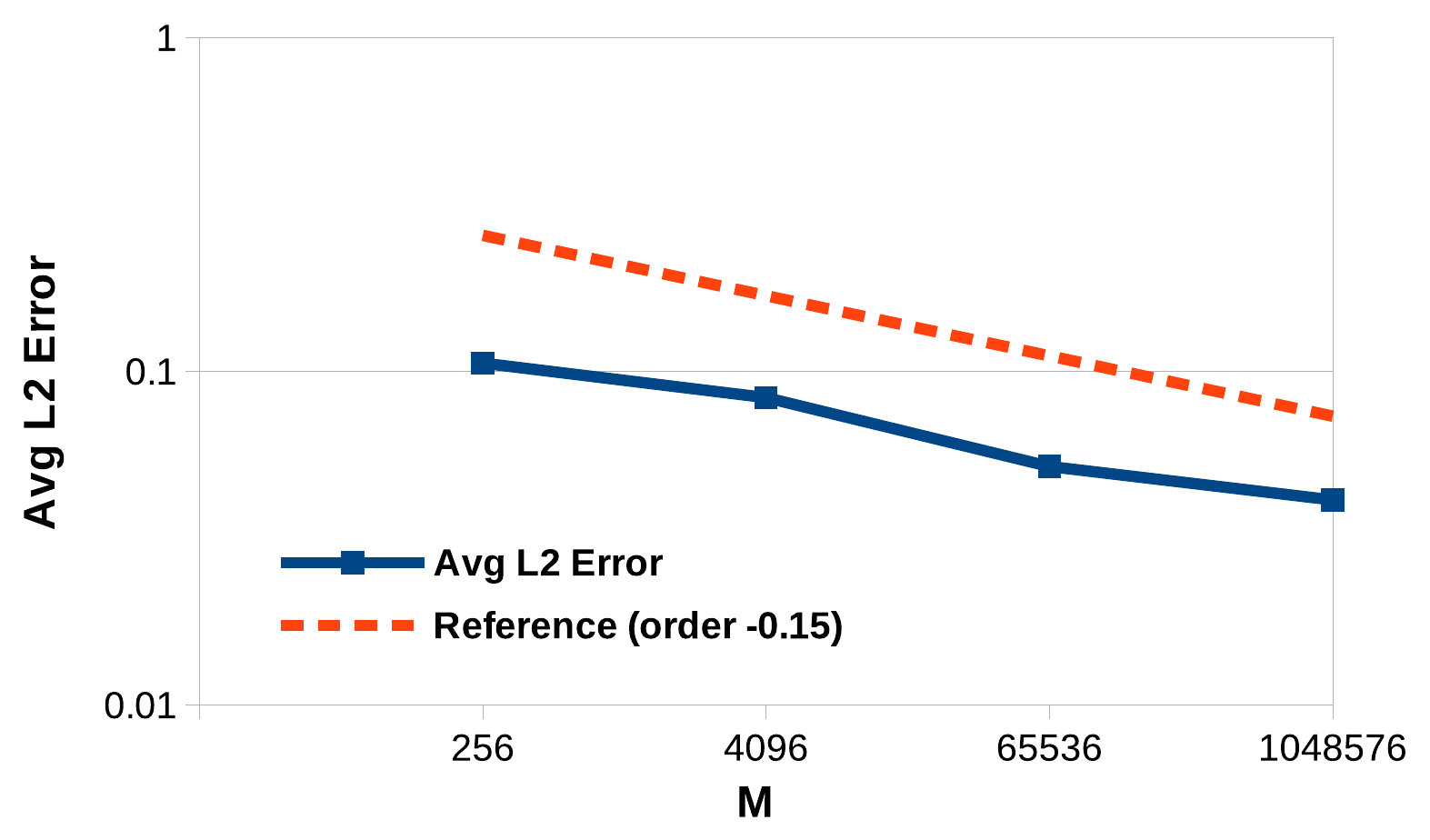}
}
\caption{\em For $M$ and $\delta$ related through \eqref{ns1} with $r=2$, convergence rates with respect to $\delta$ (left) and $M$ (right) for the uniform distribution on $[-1,1]$ but for an approximation built with respect to the larger interval $[-1.5,1.5]$.}
\label{unif2_nogood}
\end{figure}

\begin{figure}[h!]
\centerline{
\includegraphics[scale=0.37]{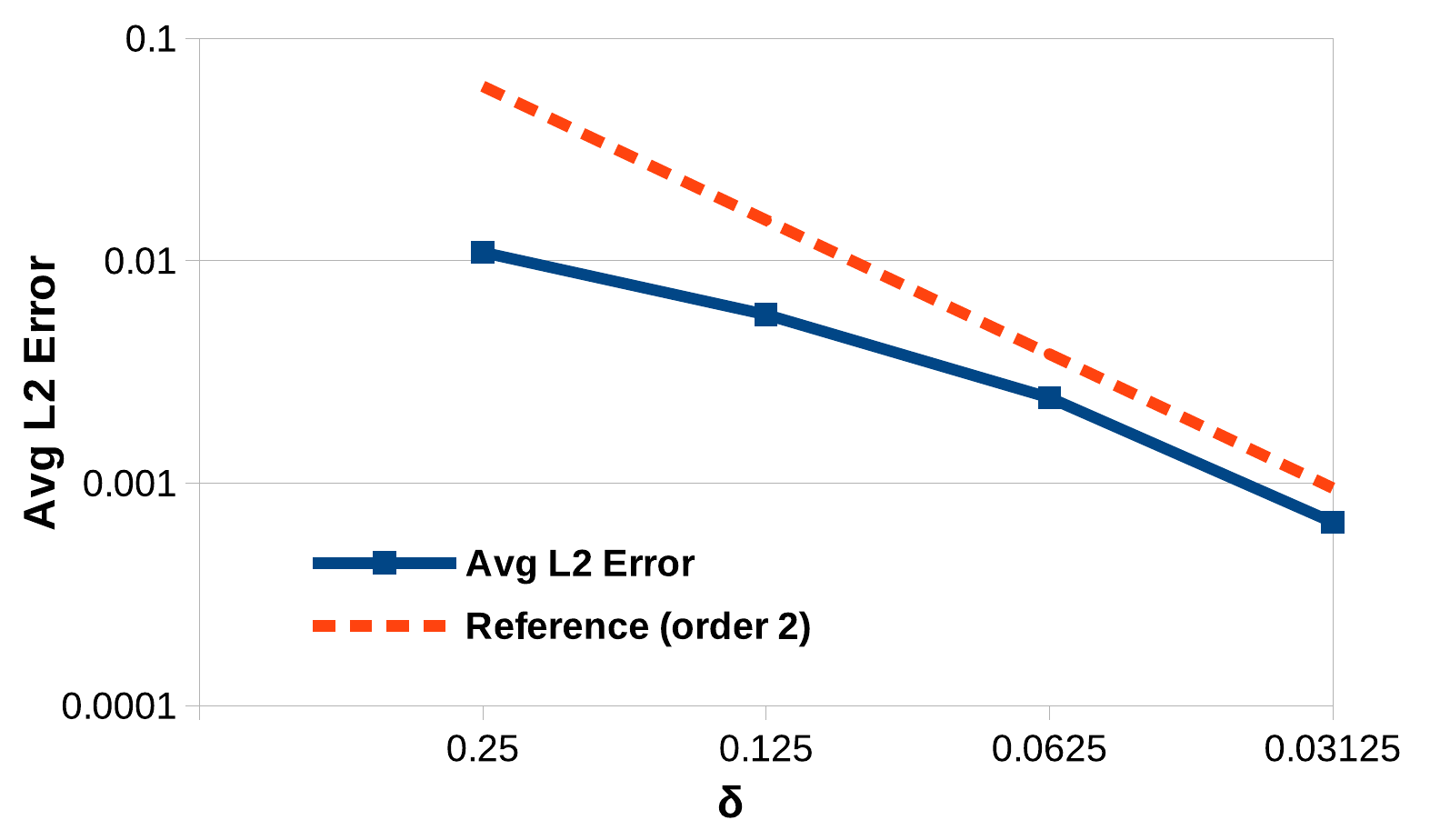}\quad 
\includegraphics[scale=0.37]{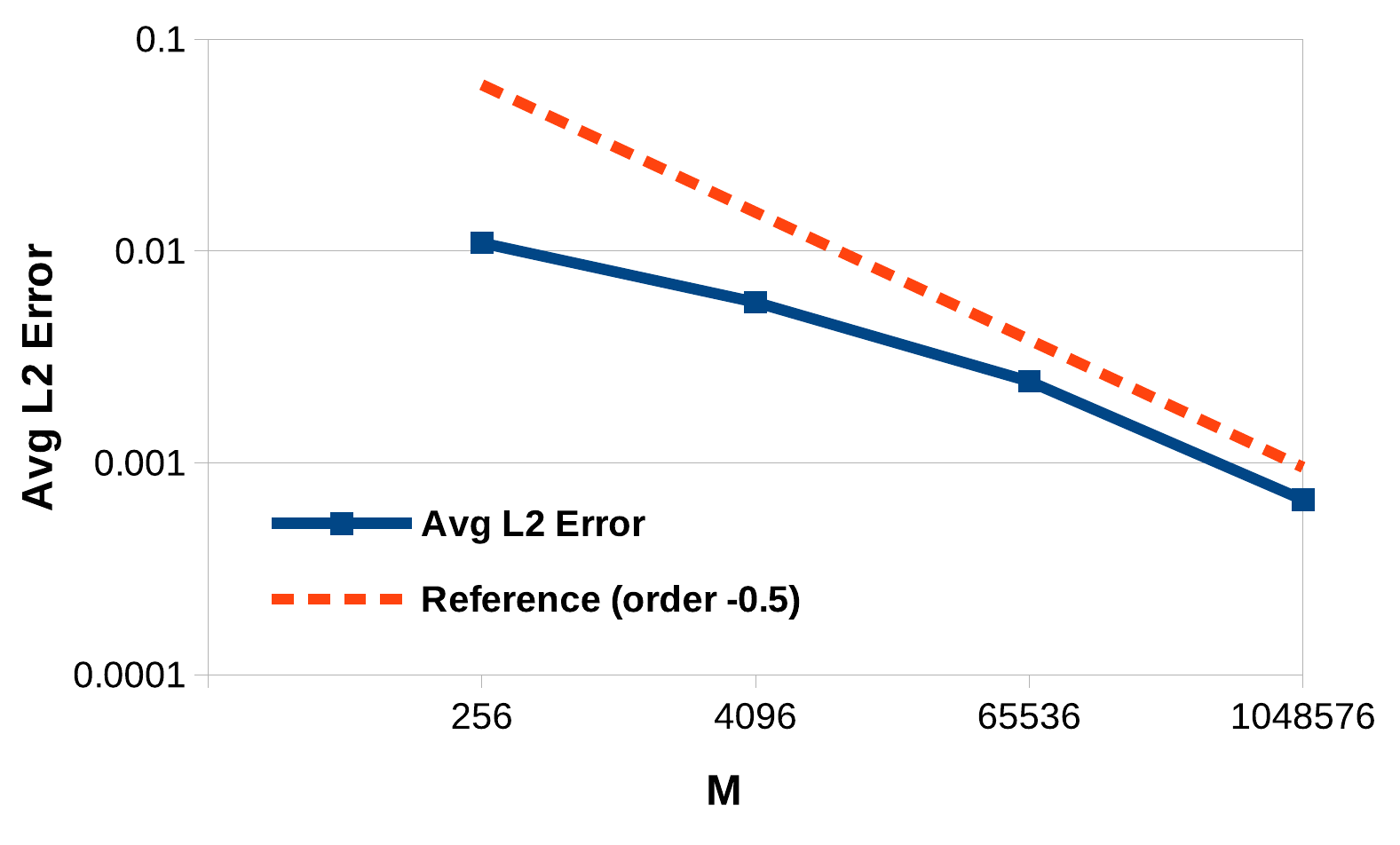}
}
\caption{\em For $M$ and $\delta$ related through \eqref{ns1} with $r=2$, convergence rates with respect to $\delta$ (left) and $M$ (right) for the uniform distribution on $[-1,1]$ for an approximation built with respect to the same interval.
}
\label{unif2}
\end{figure}

One can improve on the results of Figure \ref{unif2_nogood}, even if one does not know the support of the PDF one is trying to approximate, by taking advantage of the fact that the samples obtained necessarily have to belong to the support of the PDF and therefore provide an estimate for that support. For instance, for the example we are considering, one could proceed as follows. 
\begin{enumerate}
\item For a chosen $M$, sample $\{Y_m\}_{m=1}^M$ over $[-1.5,1.5]$.

\item Determine the minimum and maximum values $Y_{min}$ and $Y_{max}$, respectively, of the sample set $\{Y_m\}_{m=1}^M$.

\item Choose the number of bins $N_{bins}$ and set $\delta=(Y_{min}-Y_{max})/N_{bins}$.

\item Build the approximation over the interval $[Y_{min},Y_{max}]$ with a bin size $\delta$.
\end{enumerate}
It is reasonable to expect that as $M$ increases, the interval $[Y_{min},Y_{max}]$ becomes a better approximation to the true support interval $[-1,1]$. Figure \ref{yminymax} illustrates the convergence of $[Y_{min},Y_{max}]$ to $[-1,1]$.  Note that because $[Y_{min},Y_{max}]\subset[-1,1]$, the exact PDF is continuous within $[Y_{min},Y_{max}]$. Thus, it is also reasonable to expect that because the approximate PDF is built with respect an interval which is contained within the support of the exact PDF, that there will be an improvement in the accuracy of that approximation compared to that reported in Figure \ref{unif2_nogood} and, in particular, that as one increases $N_{bins}$ so that $\delta$ decreases and $M$ increases, better rates of convergence will be obtained. Figure \ref{inbetween} corresponds to the application of this procedure and shows the substantially smaller errors and substantially higher convergence rates compared that reported in Figure \ref{unif2_nogood}.

\begin{figure}[h!]
\centerline{
\includegraphics[scale=0.37]{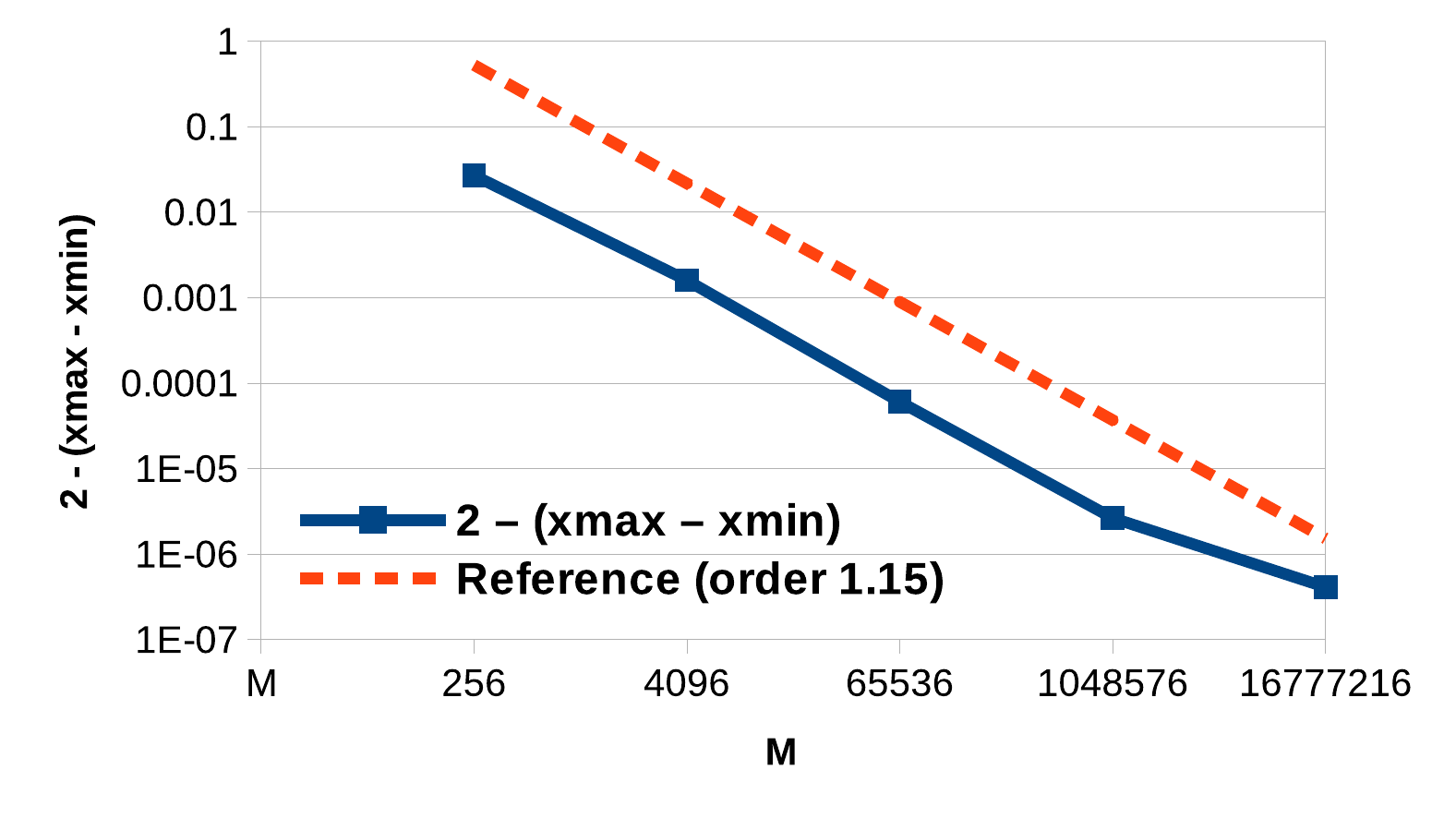} 
}
\caption{\em For the uniform distribution and for $M$ and $\delta$ related through \eqref{ns1} with $r=2$, convergence of the approximate support interval $[Y_{min},Y_{max}]$ to the exact support interval $[-1,1]$.
}
\label{yminymax}
\end{figure}

\begin{figure}[h!]
\centerline{
\includegraphics[scale=0.37]{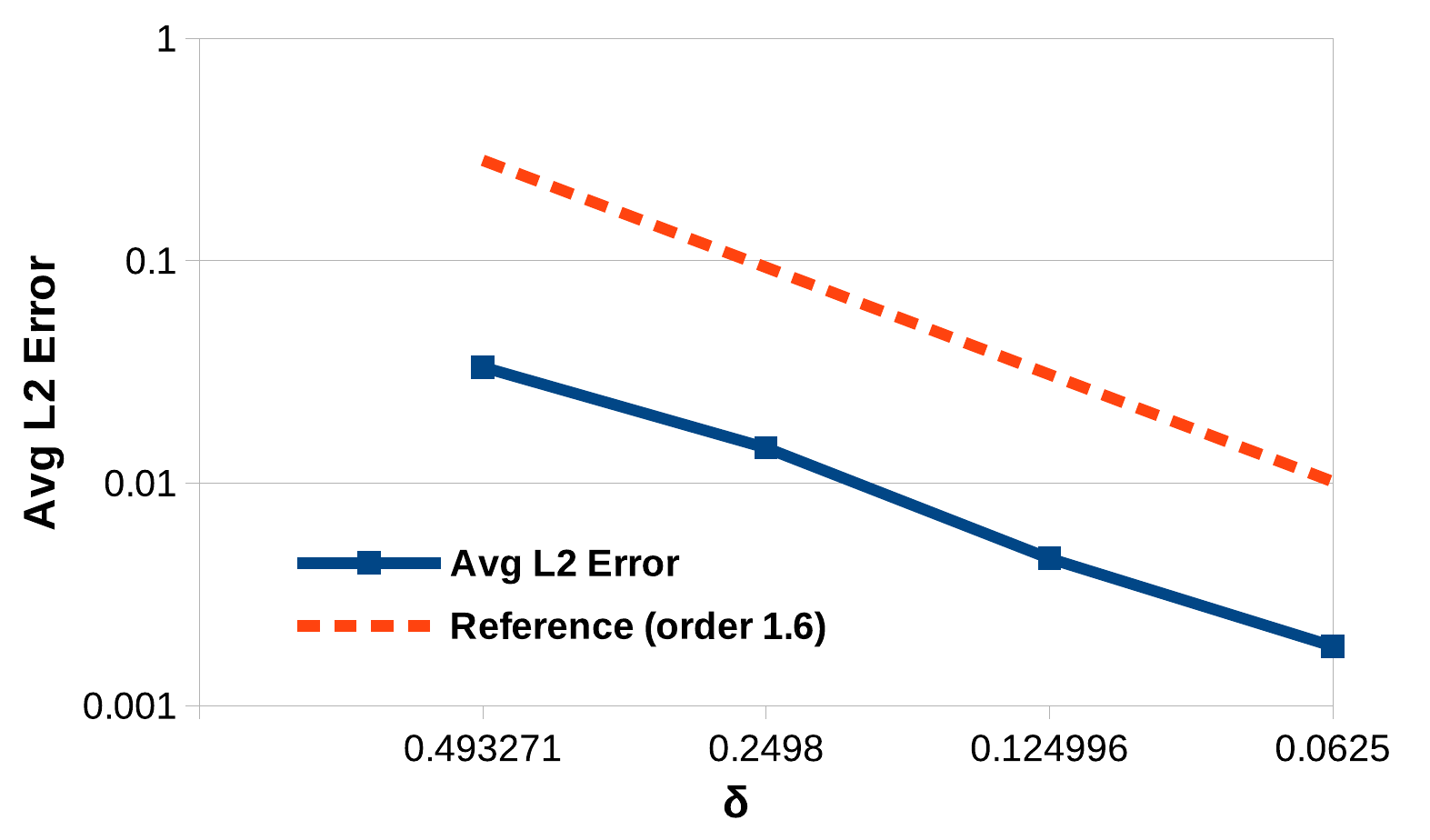}\quad 
\includegraphics[scale=0.37]{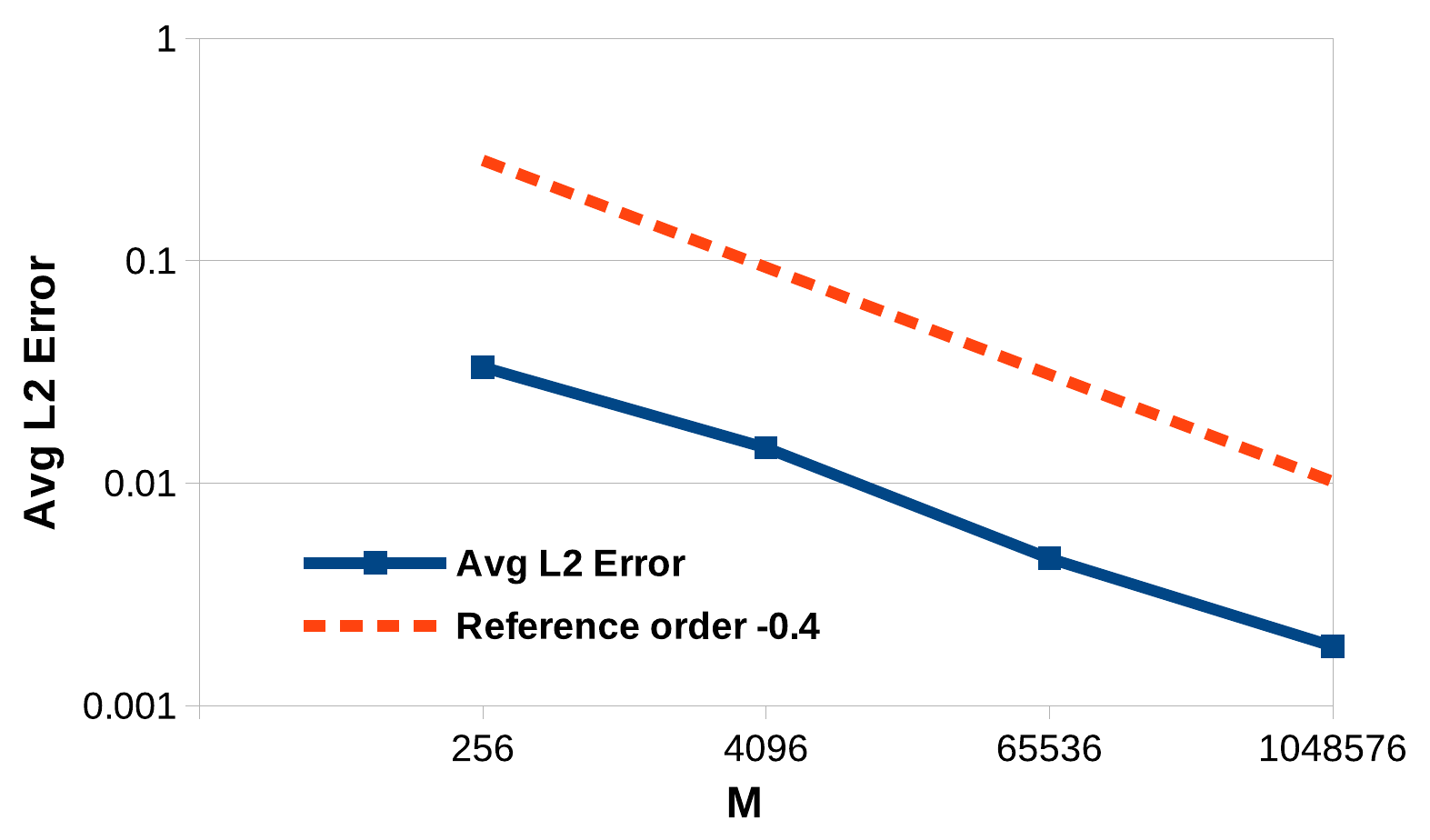}
}
\caption{\em For $M$ and $\delta$ related through \eqref{ns1} with $r=2$, convergence rates with respect to $\delta$ (left) and $M$ (right) for the uniform distribution on $[-1,1]$ with approximations built with respect to the approximate support interval $[Y_{min},Y_{max}]$.
}
\label{inbetween}
\end{figure}

A visual comparisons of the approximations obtained using the smallest $\delta$/largest $M$ pairing corresponding to Figures \ref{unif2_nogood}, \ref{unif2}, and \ref{inbetween} are given in Figure \ref{3ways}. The defects resulting from the use of the interval $[-1.5,1.5]$ for constructing the approximation of a uniform PDF that has support on the interval $[-1,1]$ are clearly evident. On the other had, using the support interval approximation process outlined above results in a visually identical approximation as that obtained using the correct support interval $[-1,1]$. Note that for the smallest value of $\delta$, we have that $Y_{min}$ approximates $-1$ and $Y_{max}$ approximates $1$ to seven decimal places.

\begin{figure}[h!]
\centerline{
\includegraphics[width=1.6in]{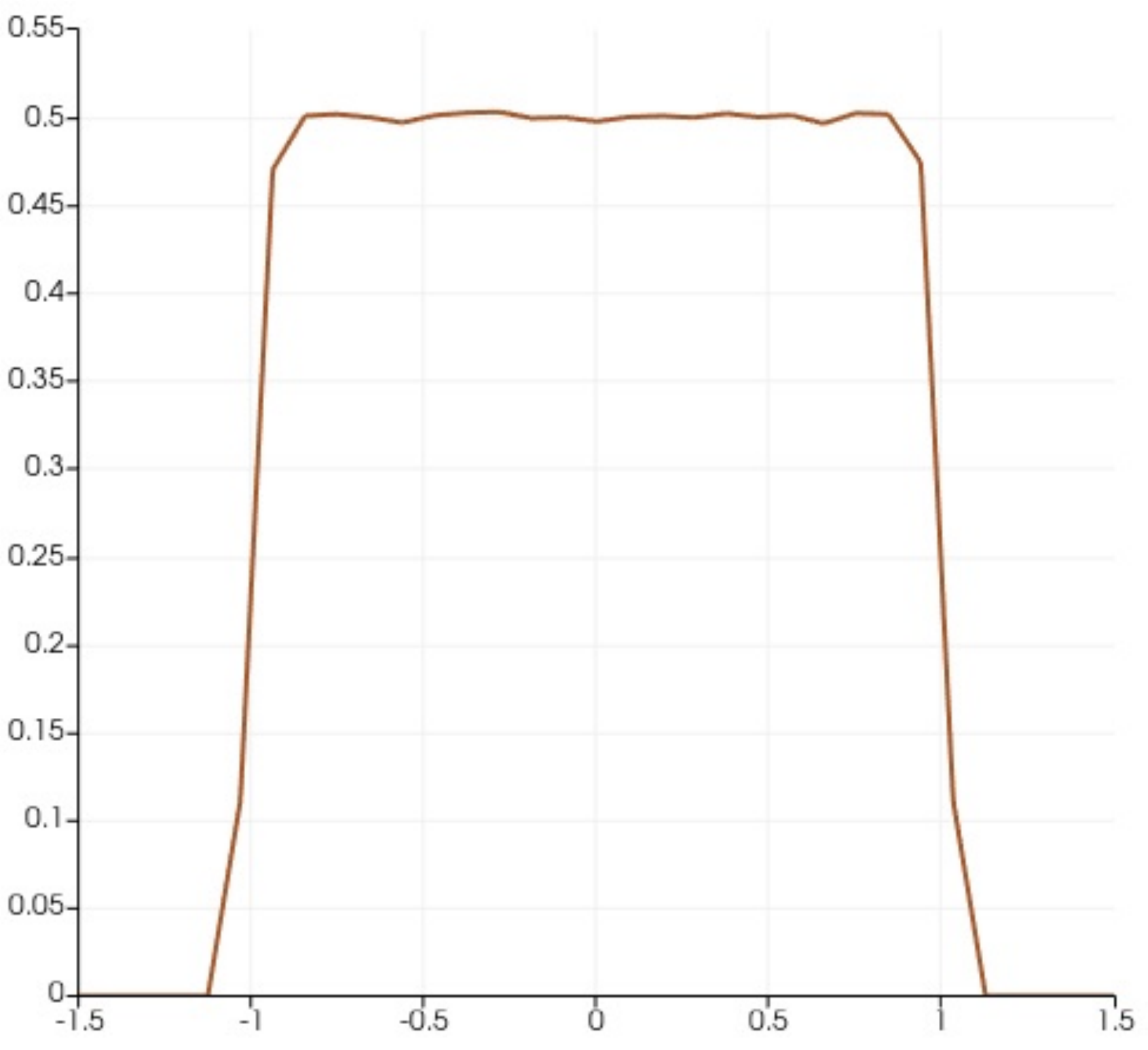}\quad
\includegraphics[width=1.6in]{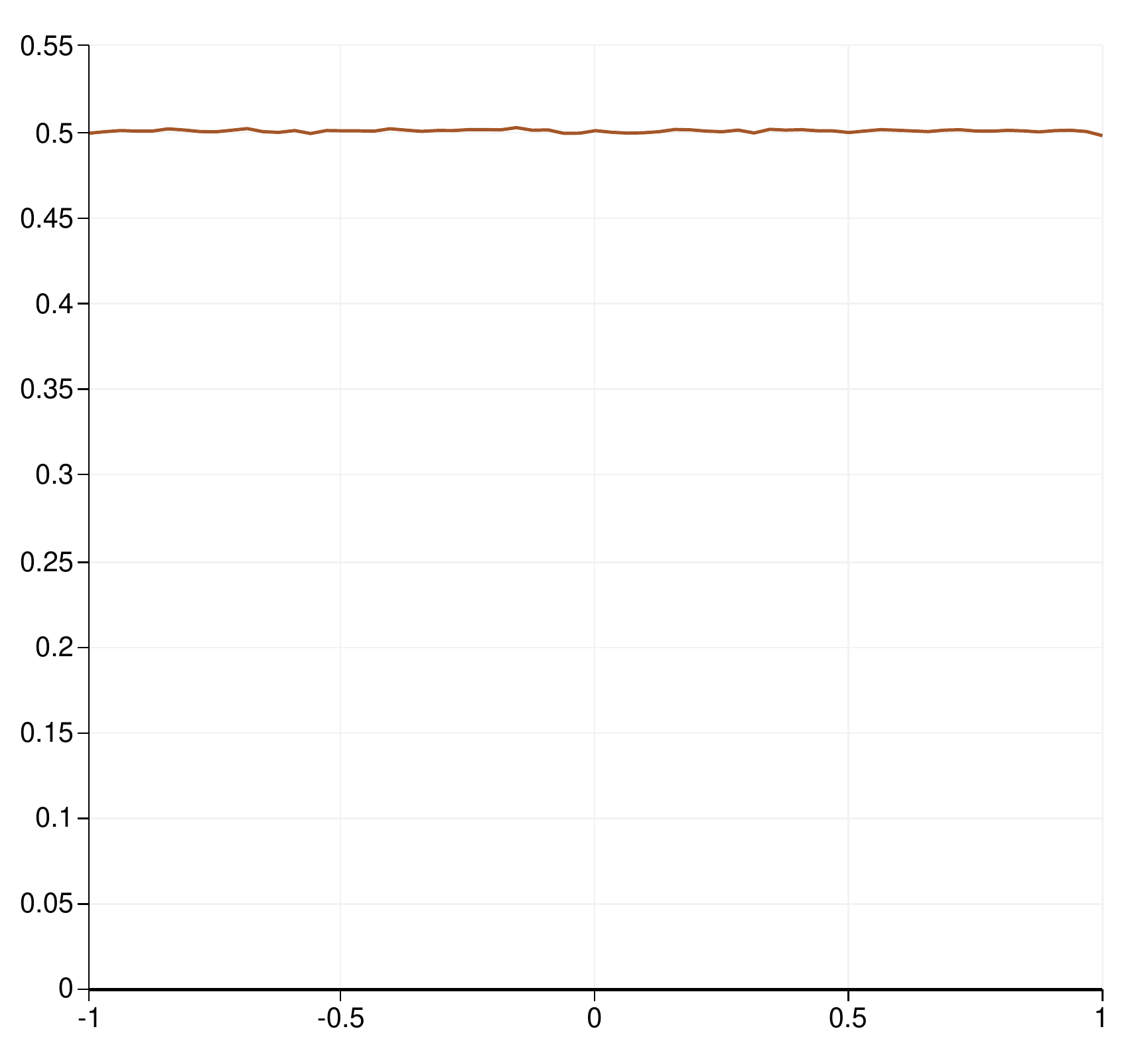}\quad
\includegraphics[width=1.6in]{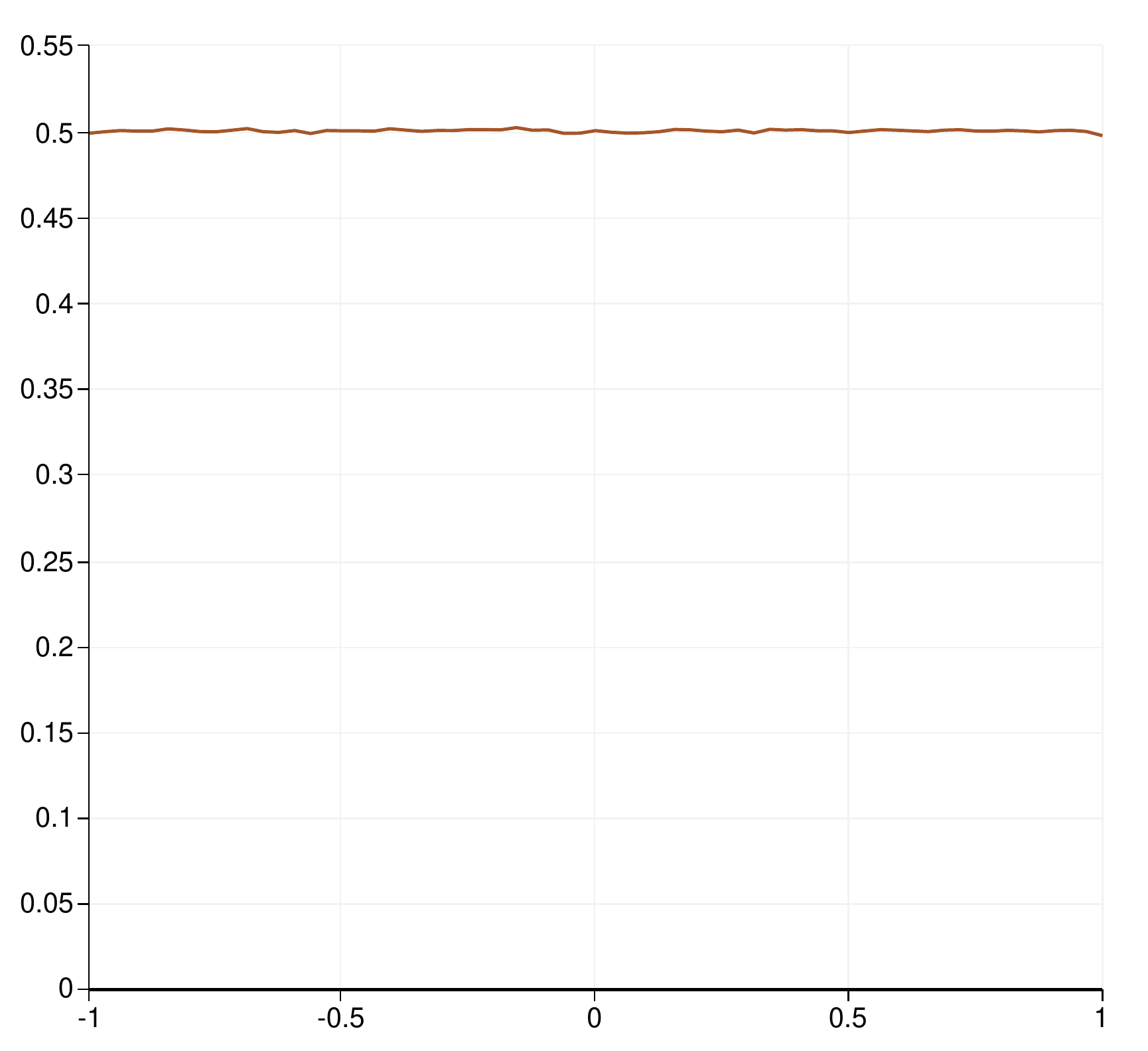}
}
\caption{\em For the uniform distribution on $[-1,1]$, the approximate PDF based on sampling in $[-1.5,1.5]$ (left), $[-1,1]$ (center), and $[Y_{min},Y_{max}]$ (right).}\label{3ways}
\end{figure}

\subsection{A non-smooth PDF}\label{nosmooth}

We next consider the approximation of a non-smooth PDF. Specifically, we consider the centered truncated Laplace distribution
\begin{equation}\label{LaplacePDF}
 f(Y) = \dfrac{1}{3 C_L} \exp{\Big(\dfrac{-|Y|}{1.5}\Big)}
\end{equation}
over $\Gamma=[-5.5,5.5]$, where $C_L = 1 - \exp(-5.5/1.5)$ is a scaling factor that ensures a unitary integral of the PDF over $\Gamma$. Here, the support domain and sampling domain are the same. This distribution is merely continuous. i.e., its derivative is discontinuous at $Y=0$, so one cannot expect optimally accurate approximations. However, as illustrated in Figure \ref{Laplace}, it seems the approximation does converge, but at a lower rate with respect to $\delta$ and at the optimal rate with respect to $M$. The latter is not surprising because Monte Carlo sampling is largely impervious to the smoothness or lack thereof of the function being approximated.

\begin{figure}[h!]
\centerline{
\includegraphics[scale=0.37]{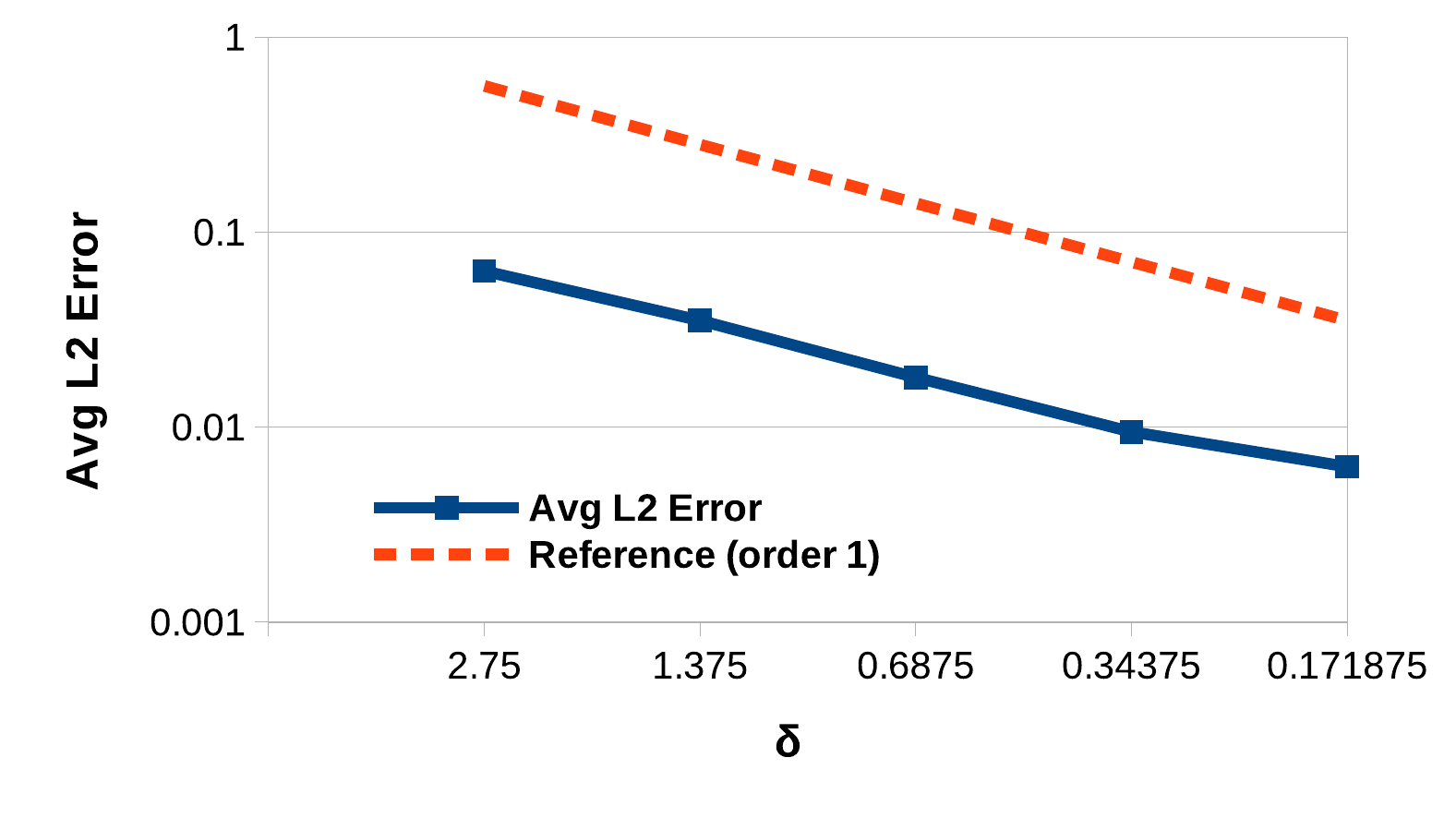}\quad 
\includegraphics[scale=0.37]{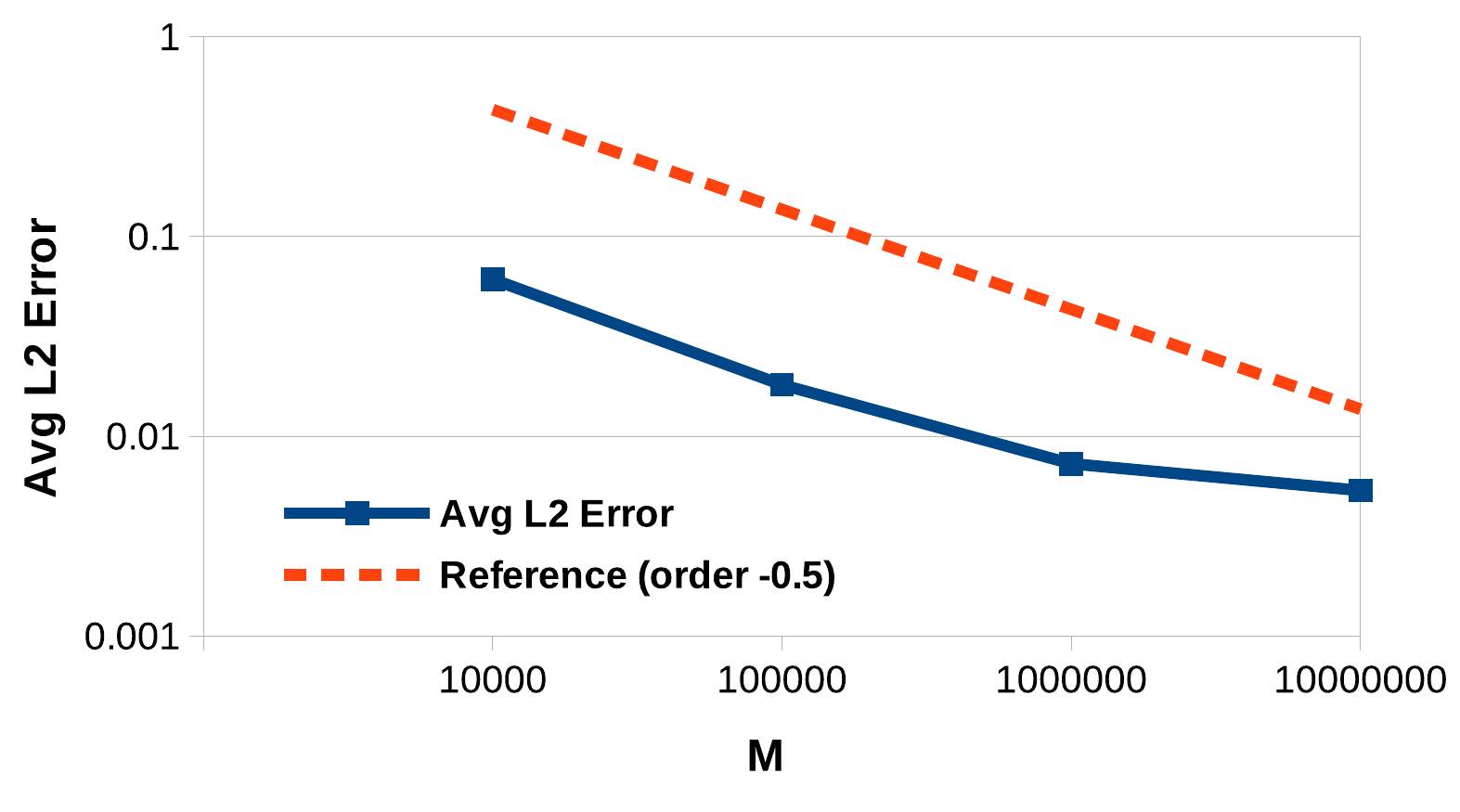}
}
\caption{\em Errors and convergence rates for the approximation \eqref{proposed_f} of the Laplace distribution \eqref{LaplacePDF}. Left: convergence with respect to $\delta$ with $M=10^7$ is fixed. Right: convergence with respect to $M$ with $h=11/2^{12}$ fixed.}
\label{Laplace}
\end{figure}

Whenever there is any information about the smoothness of the PDF, one can choose an appropriate value of $r$ in \eqref{ns1}. Alternately, possibly through a preliminary investigation, one can estimate the convergence rate of the approximation \eqref{proposed_f}. In the case of the Laplace distribution which is continuous but not continuously differentiable, one cannot expect a convergence rate greater than one. Selecting $r=1$ in \eqref{ns1} to relate $M$ and $\delta$, we obtain the results given in Figure \ref{Laplace2} which depicts rates somewhat worse that we should perhaps expect.

\begin{figure}[h!]
 \begin{center}
\includegraphics[scale=0.37]{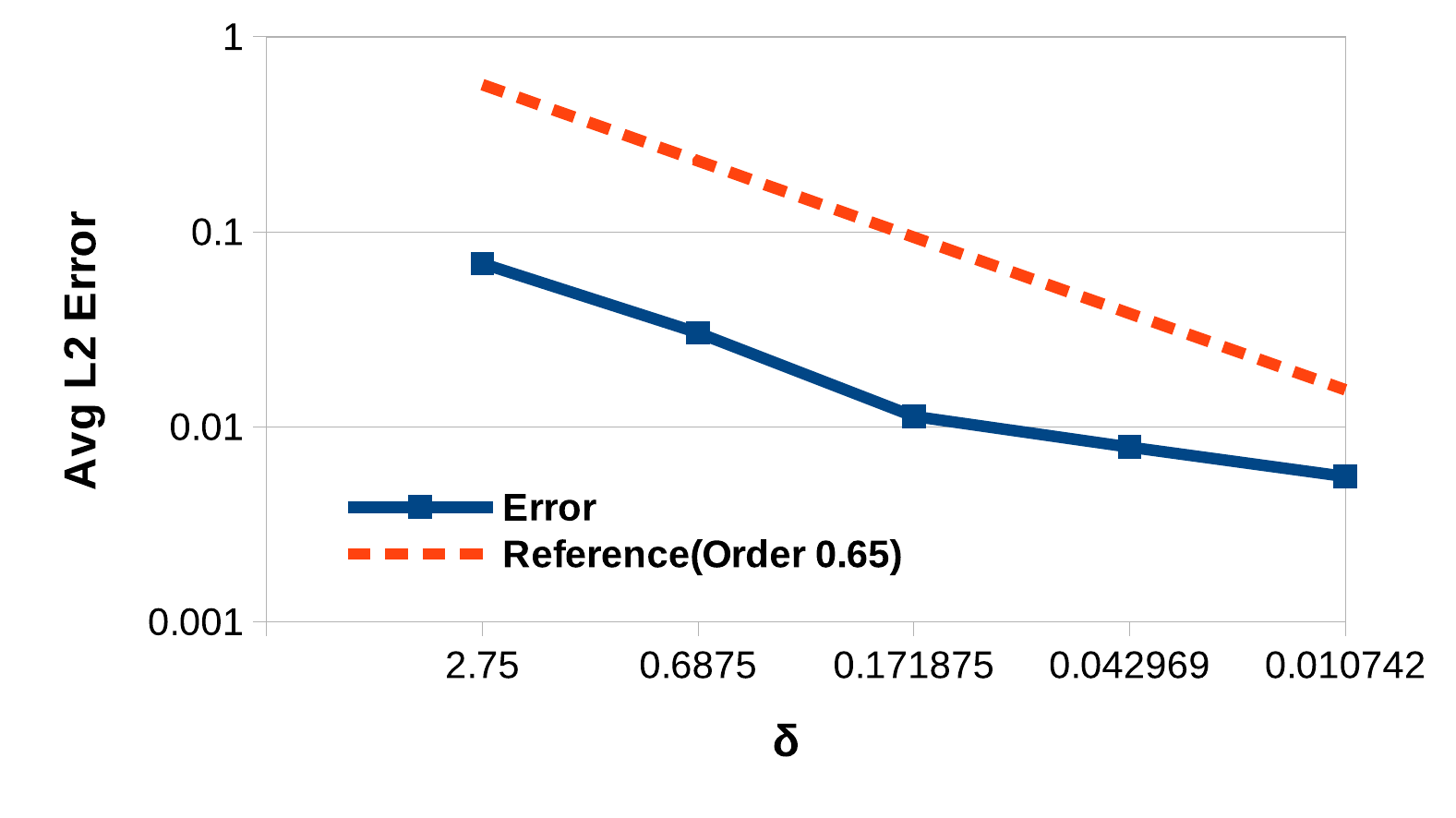}\quad 
\includegraphics[scale=0.37]{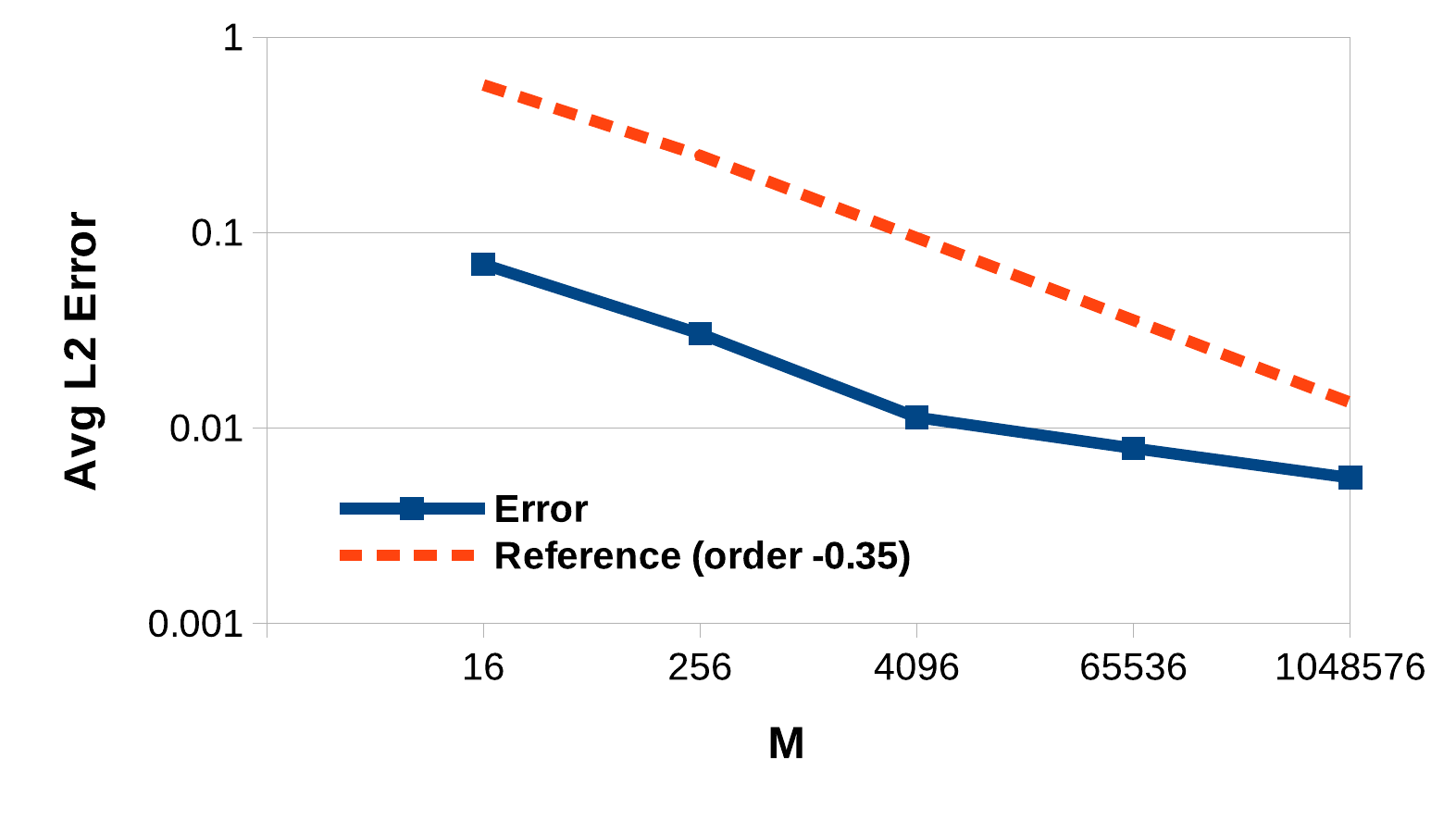}
\caption{\em For $M$ and $\delta$ related through \eqref{ns1} with $r=1$, convergence rates with respect to $\delta$ (left) and $M$ (right) for the Laplace distribution with approximations built with respect to the approximate support interval $\Gamma=[-5.5,5.5]$.}
\label{Laplace2}
\end{center}
\end{figure}

The Laplace distribution, although not globally $C^2$, is piecewise smooth, with failure of smoothness only occurring at the symmetry point of the distribution. For example, for the particular case of the centered distribution \eqref{LaplacePDF}, the distribution is smooth for $Y>0$ and $Y<0$. Thus, in general, one could build two separate, optimally accurate approximations, one for the right of the symmetry point and the other for the left of that point. Of course, doing so requires knowledge of where that point is located. If this information is not available, then one can estimate the location of that point by a process analogous to what we described in Section \ref{unpdf} for distributions whose support is not known a priori. Such a process can be extended to distributions with multiple points at which smoothness is compromised.

\subsection{Bivariate mixed PDF}\label{bivpdf}

We now consider a bivariate PDF in which the random variables $Y_1$ and $Y_2$ are independently distributed according to different PDFs. Specifically, we have that $Y_1$ is distributed according to a truncated Gaussian distribution with zero mean and standard deviation $2$, whereas $Y_2$ is distributed according to a truncated standard Gaussian. We choose $\Gamma=[-5.5,5.5]^2$ so that the joint PDF is given by 
\begin{equation}\label{jointpdf1}
 f({\bm Y}) = \dfrac{1}{\sqrt{8 \pi} C'_{G}} \exp{\Big(-\frac{Y_1^2}{8}\Big)}  \dfrac{1}{\sqrt{2 \pi} C_{G}} \exp{\Big(-\frac{Y_2^2}{2}\Big)},
\end{equation}
where $C_G$ is as in \eqref{stdGaussiantrunc} and $C'_{G} = 1/2(\mbox{erf}(2.75/\sqrt{2}) - \mbox{erf}(-2.75/\sqrt{2}))$. Results for this case are shown in Figure \ref{mixed_rate}, where we observe optimal convergence rates with respect to both $\delta$ and $M$. Visual evidence of the accuracy of our approach is given in Figure \ref{2D_mixed} that shows the approximation of the exact PDF \eqref{jointpdf1} and zoom-ins of the approximate and approximate PDFs. Computational times are very similar to those for $N_\Gamma=2$ in Table \ref{timeGauss} so that they are not provided here. 

\begin{figure}[h!]
\centerline{
\includegraphics[scale=0.37]{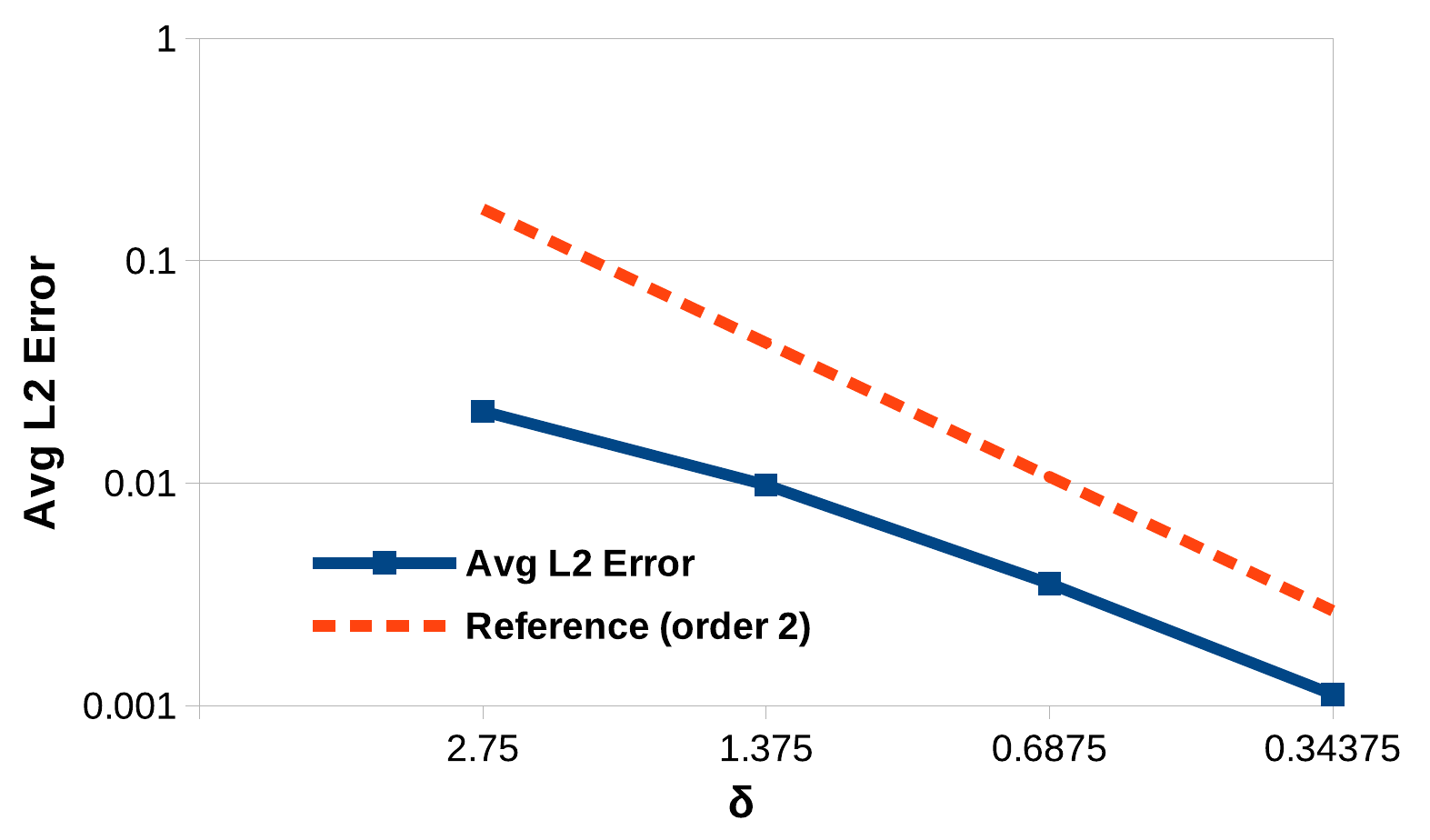}\quad 
\includegraphics[scale=0.37]{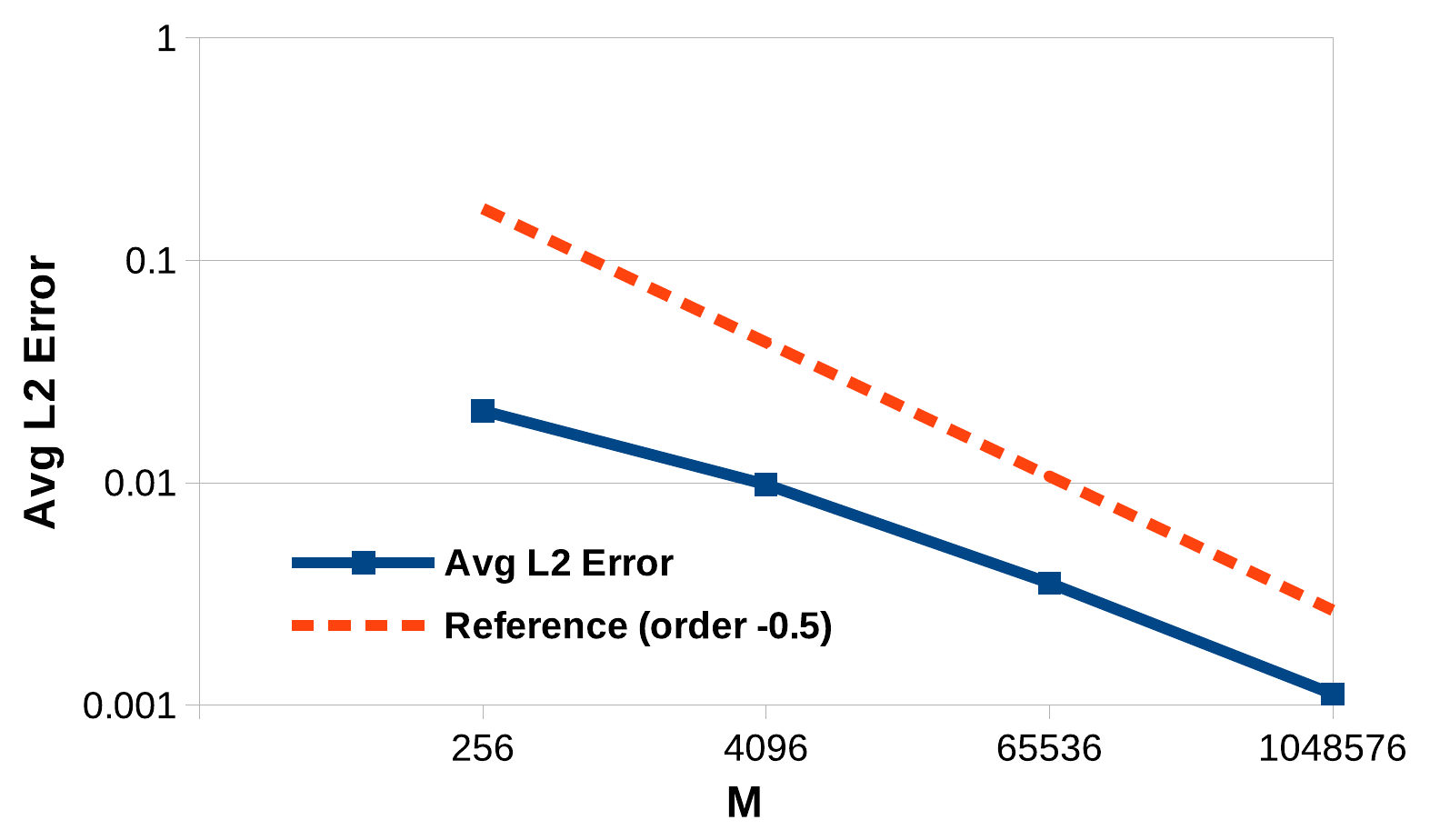}
} 
\caption{\em Errors and convergence rates for the approximation \eqref{proposed_f} of the bivariate mixed-distribution PDF \eqref{jointpdf1}. Left: convergence with respect to $\delta$ with $M=10^7$ is fixed. Right: convergence with respect to $M$ with $h=11/2^8$ fixed.}
\label{mixed_rate}   
\end{figure}

\begin{figure}[h!]
\centerline{
\includegraphics[height=1.7in,width=1.93in]{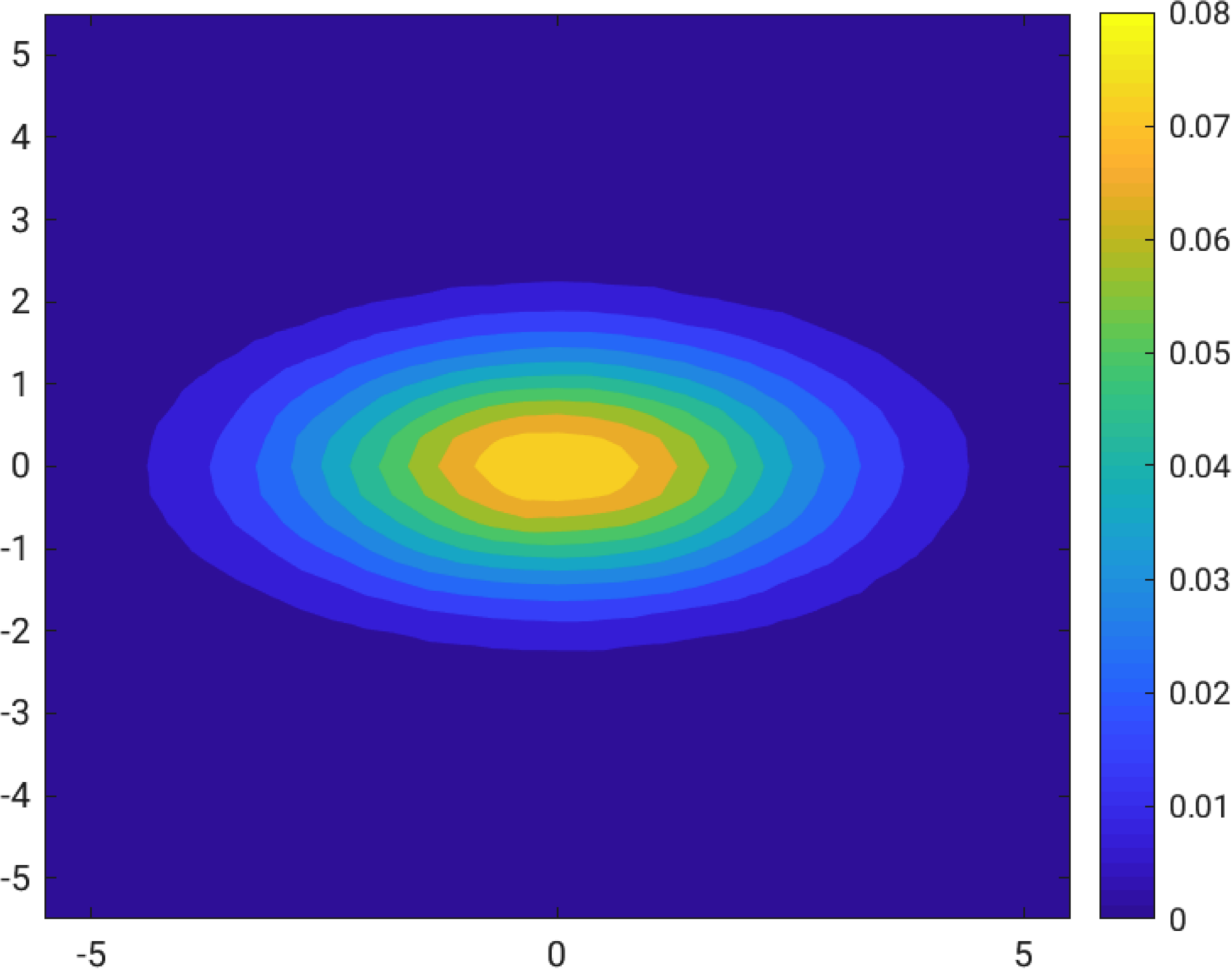}
\qquad\includegraphics[height=1.7in]{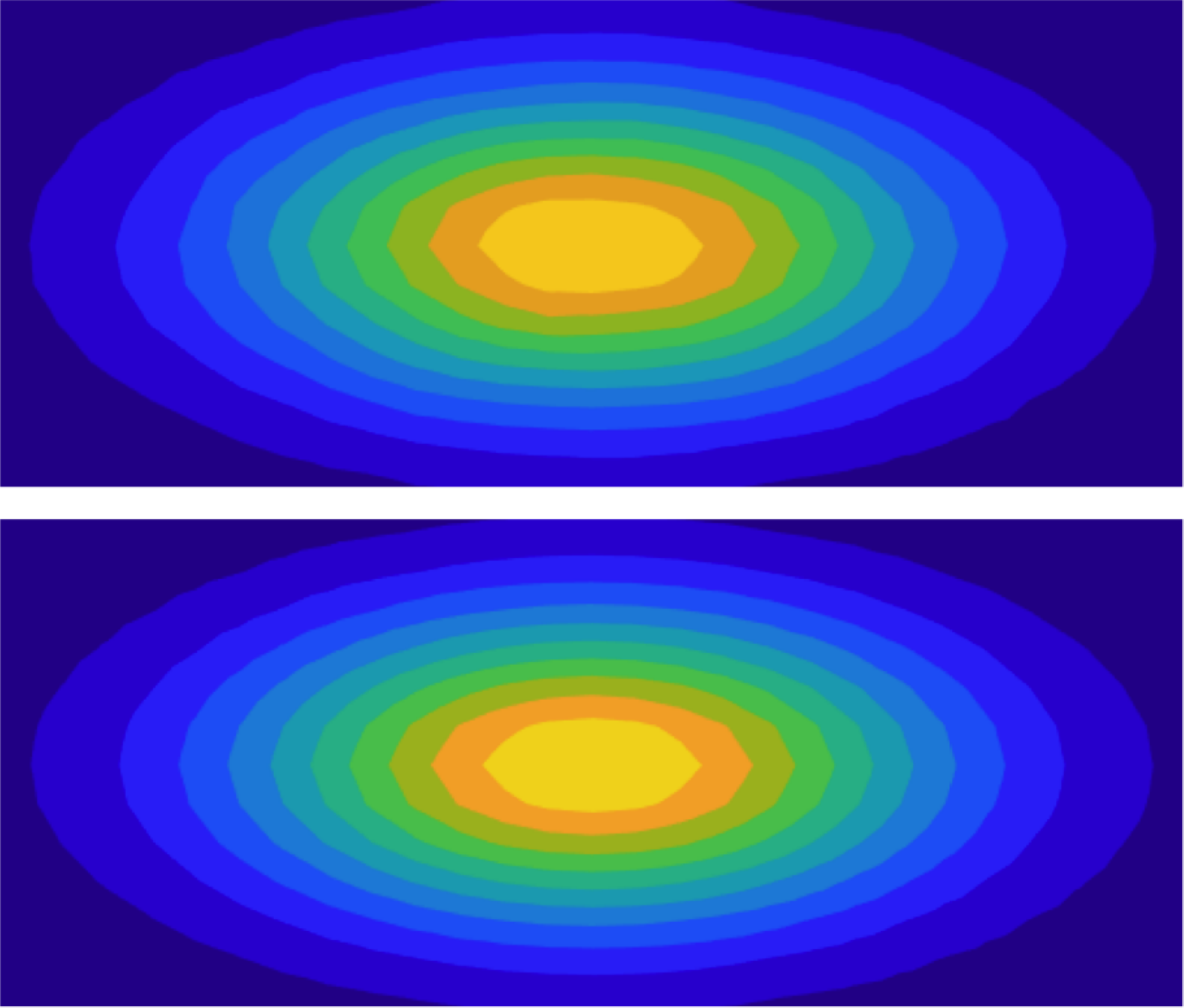}
}
\caption{\em Left: the approximation \eqref{proposed_f} of the bivariate mixed-distribution PDF \eqref{jointpdf1}. Right-top: a zoom in of the exact PDF. Right-bottom: a zoom in of the approximate PDF. For these plots, $\delta=0.34375=11/2^5$ and $M=1048576=16^5$.}
\label{2D_mixed} 
\end{figure}

\section{Application to an unknown PDFs associated with a stochastic PDE}\label{updf}

In this section, we consider the construction of approximations of the PDF of outputs of interest that depend on the solution of a stochastic PDE. In general, such PDFs are unknown a priori.

The boundary value problem considered is the stochastic Poisson problem
\begin{align}\label{poissonSys}
\begin{cases}
- \nabla \cdot \big(\,\kappa(\textbf{x}, {\bm Z}) \,\, \nabla u(\textbf{x}, {\bm Z}) \,\big)  = 1 & \text{for} \quad \textbf{x}\in D ,\,\,{\bm Z}\in \Gamma_{input} \\
 u(\textbf{x}, {\bm Z})=0 & \text{for} \quad \textbf{x}\in\partial D ,\,\,{\bm Z}\in \Gamma_{input},
\end{cases} 
\end{align}
where $D \subset \mathbb{R}^d$ denotes a spatial domain with boundary $\partial D$ and $\Gamma_{input}\subset \mathbb{R}^{N_{\Gamma_{input}}}$ is the sample space for the {\em input} random vector variable ${\bm Z}$ which we assume is distributed according to a {\em known input joint PDF} $f_{input}({\bm Z})$. 

For the coefficient function $\kappa(\textbf{x}, {\bm Z})$, we assume that there exists a positive lower bound $\kappa_{\min}>0$ almost surely on $\Gamma_{input}$ for all $\textbf{x} \in D$. We also assume that $\kappa(\textbf{x}, {\bm Z})$ is measurable with respect to ${\bm Z}$.  It is then known that the system \eqref{poissonSys} is well posed almost surely for ${\bm Z}\in\Gamma_{input}$; see, e.g.,
\cite{babuvska2007stochastic, gunzburger2014stochastic, nobile2008anisotropic, nobile2008sparse} for details. 

\vskip10pt
\paragraph{\textbf{Stochastic Galerkin approximation of the solution of the PDE}}

We assume that we have in hand an approximation $u_{approx}(\textbf{x},{\bm Z})$ of the solution $u(\textbf{x}, {\bm Z})$ of the system \eqref{poissonSys}. Specifically, spatial approximation is effected via a piecewise-quadratic finite element method \cite{brenner2007mathematical,ciarlet}. Because this aspect of our algorithm is completely standard, we do not give further details about how we effect spatial approximation. For stochastic approximation, i.e., for approximation with respect to the parameter domain $\Gamma_{input}$, we employ a spectral method. Specifically, we approximate using global  orthogonal polynomials, where orthogonality is with respect to $\Gamma_{input}$ and the known PDF $f_{input}({\bm Z})$. Thus, if $\{\Phi_j({\textbf x})\}$ denotes a basis for the finite element space used for spatial approximation and $\{\Psi_i({\bm Z})\}$ denotes a basis for the spectral space used for approximation with respect to ${\bm Z}$, with the stochastic Galerkin method (SGM) we obtain an approximation of the form 
\begin{equation}\label{SGMSGM}
\begin{aligned}
 u_{approx}(\mathbf{x}, {\bm Z}) &= \sum_i \sum_{j} U_{i,j} \, \Phi_j(\mathbf{x}) \Psi_i({\bm Z})
\\&= \sum_i u_i(\mathbf{x}) \Psi_i({\bm Z}),
\qquad\mbox{where}\qquad
 u_i(\mathbf{x}) = \sum_{j} U_{i,j} 
\Phi_j(\mathbf{x}) .
\end{aligned}
\end{equation}
Note that once the SGM approximation \eqref{SGMSGM} is constructed, it may be evaluated at any point ${\textbf x}\in D$ and for any parameter vector ${\mathbf Z}\in\Gamma_{input}$. Having chosen the types of spatial and stochastic approximations we use, the approximate solution $u_{approx}(\textbf{x},{\bm Z})$, i.e., the set of coefficients $\{U_{i,j}\}$, is determined by a stochastic Galerkin projection, i.e., we determine an approximation to the Galerkin projection of the exact solution with respect to both the spatial domain $D$ and the parameter domain $\Gamma_{input}$. This approach is well documented so that we do not dwell on it any further; one may consult, e.g., \cite{babuska2004galerkin, babuvska2005solving, capodaglio2018approximation, ghanem1991stochastic, gunzburger2014stochastic}, for details. Note that once the surrogate \eqref{SGMSGM} for the solution of the PDE is built, it can be used through direct evaluation to cheaply determine an approximation of the solution of the PDE for any ${\textbf x}\in\ D$ and any ${\bm Z}\in\Gamma_{input}$ instead of having to do a new approximate PDE solve for any new  choice of ${\textbf x}$ and ${\bm Z}$.

The error in $u_{approx}(\textbf{x},{\bm Z})$ depends on the grid-size parameter $h$ used for spatial approximation and the degree of the orthogonal polynomials used for approximation with respect to the input parameter vector ${\bm Z}$. In practice, these parameters should be chosen so that the two errors introduced are commensurate. However, here, because our focus is on stochastic approximation and because throughout we use the same finite element method for spatial approximation, we use a small enough spatial grid size so that the error due to spatial approximation is, for all practical purposes, negligible compared to the errors due to stochastic approximation.

\vskip10pt
\paragraph{\textbf{Outputs of interest depending on the solution of the PDE}}

In our context, outputs of interest are spatially-independent functionals of the solution $u(\textbf{x},{\bm Z})$ of the system \eqref{poissonSys}. Here, we consider the two specific functionals
\begin{align}\label{QoI}
 Y({\bm Z}) 
 =\sum_i \Big(\dfrac{1}{|D|}\int\limits_{D} u_i(\mathbf{x}) d \mathbf{x}\Big) \Psi_i({\bm Z})
\end{align}
or
\begin{align}\label{QoI2}
 Y({\bm Z}) =\sum_i \Big(\int\limits_{D} u_i^2(\mathbf{x}) d \mathbf{x}\Big) \Psi_i({\bm Z}),
\end{align}
i.e., the average of the approximate solution and one involving the integral of the square of $u_i$, respectively, over the spatial domain $D$. The output of interest $Y$ is a random variable that depends on the random input vector ${\bm Z}$. Note that although we consider scalar outputs of interest, the extension to vector-valued outputs of interest is straightforward. Note that throughout, all outputs of interest are standardized, namely, they are translated by their mean and scaled by their standard deviation. Hence, we seek approximations of the PDFs of standardized outputs of interest.

It is important to keep in mind that we are dealing with two random variables. First, we have the {\em input} random variable ${\mathbf Z}$ having a known PDF $f_{input}({\mathbf Z})$ supported over the known parameter domain $\Gamma_{input}$. Second, we have the {\em output} random variable $Y$ having an unknown PDF $f_{output}(Y)$ supported over an unknown parameter domain $\Gamma_{output}$. Although we do not know the output PDF, in fact that is what we want to construct so that further samples of the output of interest $Y$ can be obtained by simple direct sampling of the PDF $f_{output}(Y)$ for $Y$.

Thus, the task at hand is 
\begin{quote}\normalsize
{\em given the known PDF $f_{input}({\bm Z})$ of the random input ${\bm Z}\in\Gamma_{input}$, determine an approximation of the {\em unknown} PDF $f_{output}(Y)$ of an output of interest $Y({\bm Z})$.}
\end{quote}
To deal with this task, one simply follows the recipe:
\begin{enumerate}
\item construct the stochastic Galerkin approximation $u_{approx}(\textbf{x},{\bm Z})$ given in \eqref{SGMSGM} of the solution $u(\textbf{x}, {\bm Z})$ of the PDE \eqref{poissonSys};
\item choose $M$ samples $\{ {\bm Z}_{m} \}_{m=1}^M$ of the input random vector ${\bm Z}$ according to the known given input PDF $f_{input}({\bm Z})$;
\item determines $M$ samples of the approximate solution $\{u_{approx}(\textbf{x},{\bm Z}_m)\}_{m=1}^M$ of the PDE \eqref{poissonSys} by evaluating \eqref{SGMSGM} at each of the samples ${\bm Z}_m$ chosen in step 2;
\item use the approximate solution samples obtained in step 3 to determine $M$ samples $\{Y_m=Y({\bm Z}_m)\}_{m=1}^M$ of an output of interest from, e.g., \eqref{QoI} or \eqref{QoI2};
\item use the output of interest samples $\{Y_m\}_{m=1}^M$ obtained in step 4 to determine, using \eqref{proposed_f}, an approximation to the output PDF $f_{output}(Y)=f_{output}\big(Y({\bm Z})\big)$.
\end{enumerate}

Of course, because the exact PDF $f_{output}(Y)$ is not known, we cannot use \eqref{al2e} to compute errors. Thus, as a surrogate for the exact PDF, we use a histogram approximation $f^{histo}_{{\widehat M},{\widehat\delta}}(Y)$ obtained with a large number of bins (and therefore a very small ${\widehat\delta}$ and a large number of samples ${\widehat M}$), where ``large'' is relative to what is used in obtaining approximations using, e.g., \eqref{proposed_f}. Thus, we now use
\begin{equation}\label{al2e_histo}
 {\mathcal E}_{f_{approx}} = \Big(\dfrac{1}{M}\sum_{m=1}^M \big(f^{histo}_{{\widehat M},{\widehat\delta}}(Y_m) - {f}_{approx}(Y_m)\big)^2 \Big)^{1/2}
\end{equation}
as a measure of the error in any approximation ${f}_{approx}(Y)$ of ${f}_{output}(Y)$ that involves $M\ll{\widehat M}$ samples and a bin width $\delta\gg{\widehat\delta}$.

\vskip10pt
\paragraph{\textbf{Illustrative results for a specific choice for the coefficient of the PDE}}

For the coefficient function in the PDE \eqref{poissonSys}, we choose 
\begin{eqnarray}\label{coeff}
\kappa(\textbf{x}, {\bm Z}) = \kappa_{\min} \mbox{+} \exp \big(\gamma(\textbf{x}, {\bm Z})\big)
\quad\mbox{with}\quad \gamma(\textbf{x}, {\bm Z})=
\mu\,\, \mbox{+} \sum_{n=1}^{{N_{input}}} \sqrt{\lambda_n} \psi_n({\textbf{x}}) Z_{n},
\end{eqnarray}
where $\{\lambda_n,\psi_n({\textbf{x}})\}$ are the eigenpairs of a given covariance function, with the eigenvalues arranged in non-increasing order and the random variables $\{ Z_n\}_{n=1}^{N_{input}}$ are independent and identically distributed standard Gaussian variables. One recognizes that $\gamma(\textbf{x}, {\bm Z})$ is a truncated Karhunen-Lo\`{e}ve (KL) expansion corresponding to a correlated Gaussian random field with mean $\mu$ \cite{frauenfelder2005finite,li2008fourier, schevenels2004application}.

Here, we consider the specific covariance function
\begin{align}\label{covfunc}
C_{\gamma}(\textbf{x},{\textbf{x}}') = \sigma_{\gamma}^2 \exp\Big[-\frac{1}{L}\Big(\sum_{i=1}^d | x_i - {x}_i'|\Big)\Big],
\end{align}
where $\sigma_{\gamma}^2$ denotes a variance and $0<L \leq \mbox{diam}(D)$ a correlation length. The eigenpairs satisfy the generalized eigenvalue problem
\begin{align}\label{continuousEVP}
 \int_D C_{\gamma}(\textbf{x},{\textbf{x}'})\psi_n(\textbf{x}') d\textbf{x}' = \lambda_n \psi_n({\textbf{x}}).
\end{align}
The eigenpairs $\{\lambda_n,\psi_n\}$ are approximately determined by means of a Galerkin projection as in \cite{capodaglio2018approximation,gunzburger2014stochastic}.

The $N_{input}$ components of the input stochastic variable ${\bm Z}$ are independent and are all distributed according to a standard Gaussian PDF. Thus, in the spectral method discretization with respect to the input stochastic variable ${\bm Z}$, we use tensor products of Hermite polynomials as the orthonormal basis $\{\Psi_j({\mathbf Z})\}$. Due to the orthogonality of these polynomials with respect to the Gaussian PDF, this choice results in very substantial savings in both the assembly and solution aspects of the discretized SGM system. Details can be found in, e.g.,  \cite{capodaglio2018approximation,gunzburger2014stochastic}.

For the numerical tests, we choose the spatial domain $D$ to be the unit square $[0,1]^2$, the correlation length $L=0.1$, $\kappa_{\min}=0.01$, and $\mu = 0$.

\vskip5pt
\underline{\em Output of interest \eqref{QoI}.} Here, we consider the approximation \eqref{proposed_f} of the PDF $f_{output}(Y)$ of the standardized output of interest $Y$ given by \eqref{QoI}. 

We consider $\Gamma_{output}=[-5,3]$, and in \eqref{coeff} and \eqref{covfunc} we set $N_{input}=2$, $\sigma_{\gamma}=1.4$. Because ${\bm Z}=(Z_1,Z_2)$ with $Z_1$ and $Z_2$ being standard Gaussian variables, $\Gamma_{input}=(-\infty,\infty)^{N_{input}}$. A plot of the output of interest \eqref{QoI} as a function of the input variables ${\bm Z}=(Z_1,Z_2)$ is given, for ${\bm Z}\in[-3.5.3.5]^2$, in Figure \ref{qoi1} (left). Plots of the approximate output PDF $f_{\delta,M}(Y)$ determined using \eqref{proposed_f} is given in Figure \ref{trial}. For comparison purposes, a plot of the histogram approximation $f^{histo}_{\widehat{\delta},\widehat{M}}(Y)$ determined using \eqref{histo_def} is also provided in that figure, but with larger sample size $M$ and smaller bin size $\delta$ compared to those used for $f_{\delta,M}(Y)$.
 
\begin{figure}[h!]
\centerline{
\includegraphics[height=1.4in,width=1.6in]{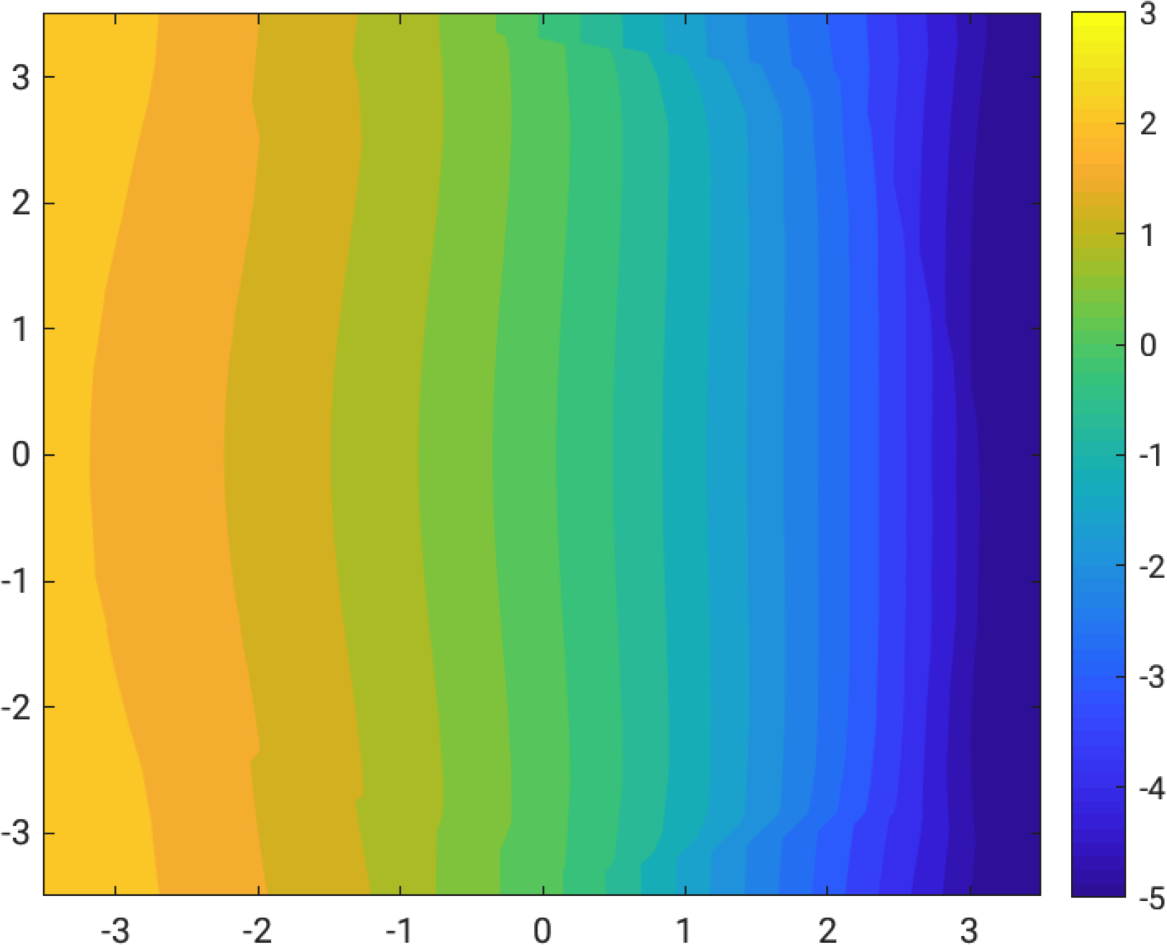}
\quad
\includegraphics[height=1.4in,width=1.6in]{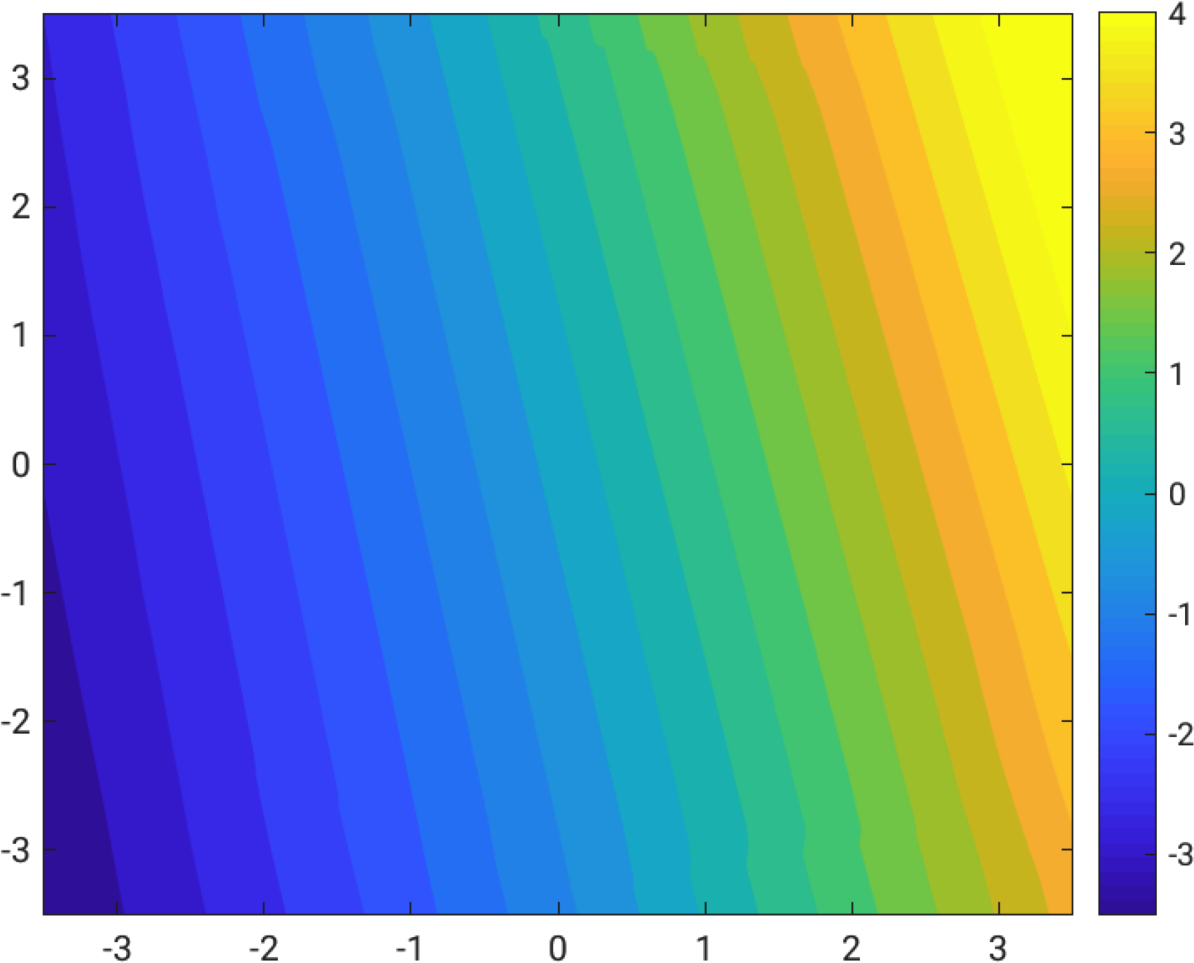}
} 
\caption{\em For the coefficient \eqref{coeff} with $N_{input}=2$, the outputs of interest \eqref{QoI} (left) and \eqref{QoI2} (right) as a function of the input variable ${\bm Z}=(Z_1,Z_2)$.}
\label{qoi1}
\end{figure}

\begin{figure}[h!]
\centerline{\includegraphics[scale=0.5]{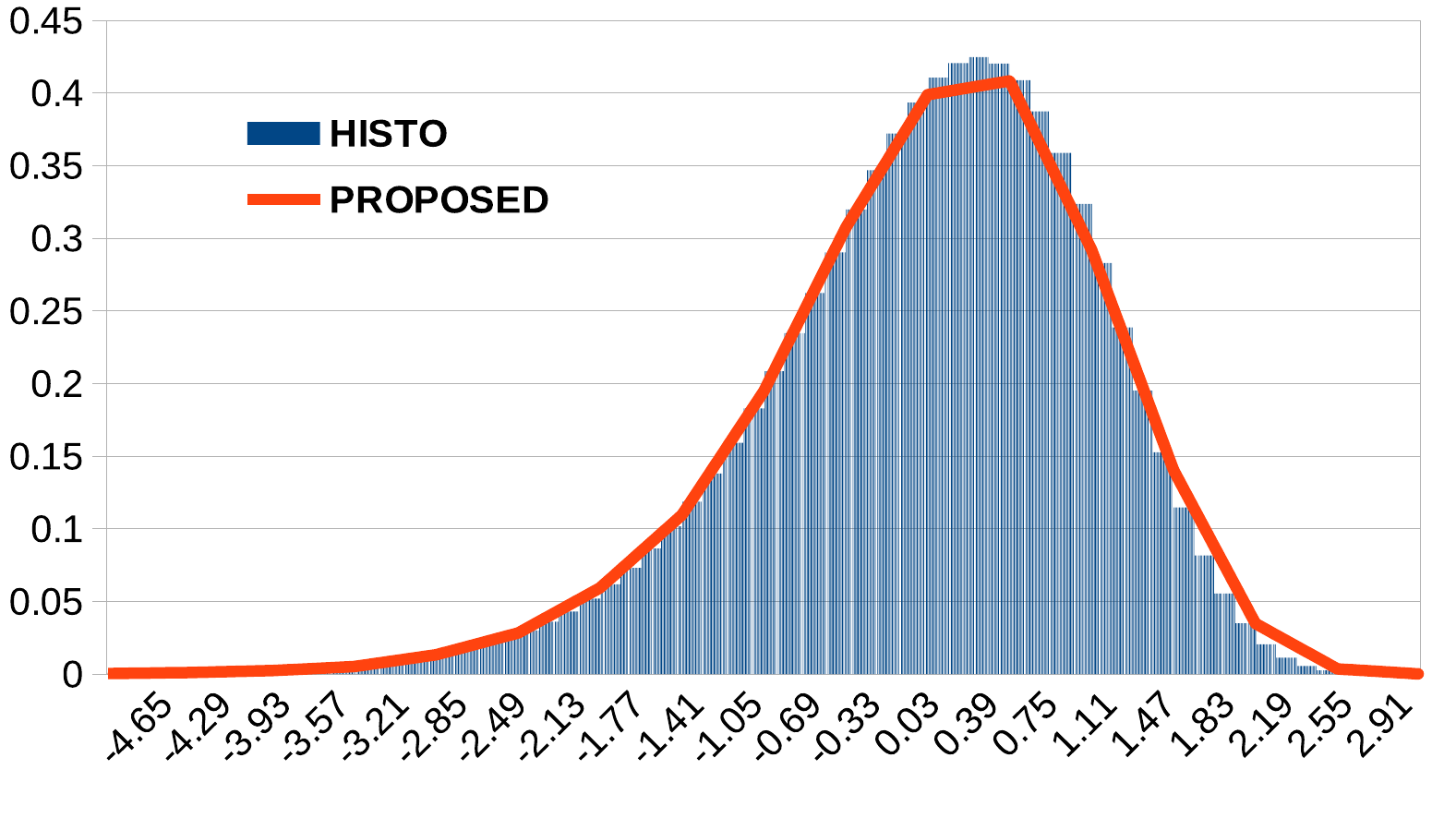}}

\centerline{\includegraphics[scale=0.5]{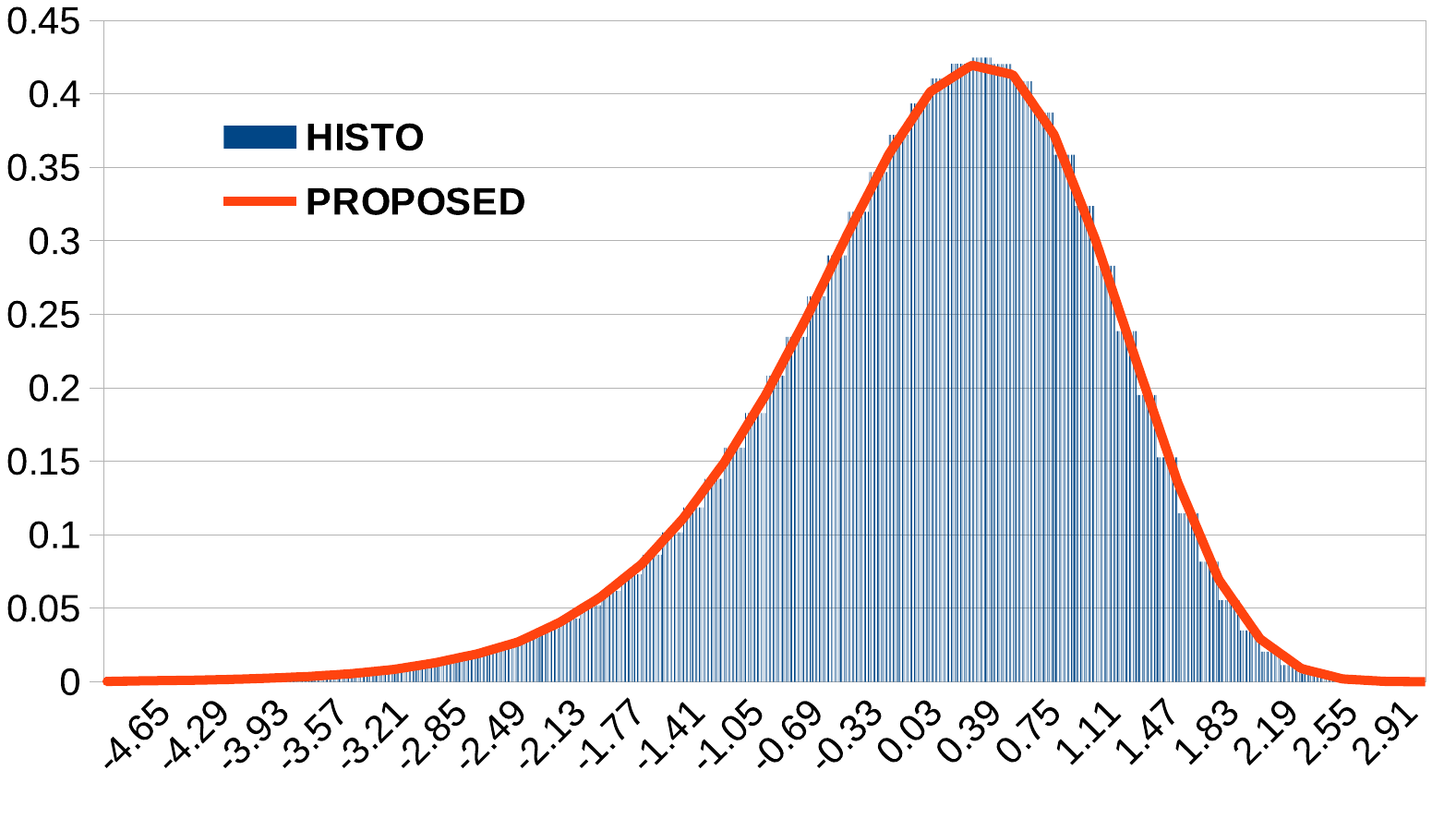}} 
\caption{\em For the coefficient \eqref{coeff} with $N_{input}=2$ and the output of interest \eqref{QoI}, a comparison between the histogram approximation \eqref{histo_def} of the PDF $f_{output}(Y)$ with $\widehat{\delta} = 0.125$ and  $\widehat{M}=16^6$ and the approximation \eqref{proposed_f} with $h=0.5$ and $M=16^4$ (top) and with $h=0.25$ and $M=16^5$ (bottom).}
\label{trial}
\end{figure}

We next examine the convergence behavior of the approximation \eqref{proposed_f} of the output PDF. Because the exact PDF is unknown, we measure the error using \eqref{al2e_histo} with the histogram surrogate $f^{histo}_{\widehat\delta,\widehat M}$ obtained with a bin size of $\widehat \delta = 8/2^8$ and $\widehat M=10^8$ samples. To study the order of convergence with respect to $\delta$, we choose
\begin{align}\label{h_order_SGM}
M = 10^7 \qquad \mbox{and} \qquad \delta=8/2^k, \qquad \mbox{for} \qquad  k=2,3,4,5,
\end{align}
whereas for the convergence with respect to $M$, we choose
\begin{align}\label{M_order_SGM}
\delta = 8/2^7 \qquad \mbox{and} \qquad M=10^k \qquad \mbox{for} \qquad  k=3,4,5,6. 
\end{align}
Note that for these values, we have $M<{\widehat M}$ and $\delta>{\widehat \delta}$. In Figure \ref{order_sgm}, we observe that  the convergence rates with respect to $\delta$ and $M$ are approximately $1.75$, and $0.45$, respectively. Given that, from examining Figure \ref{trial}, the output PDF seems to be $C^2(\Gamma_{output})$, these rates are lower than the values $2$ and $0.5$, respectively, that one might expect. Likely causes of these lower rates are that errors are determined by comparing to a histogram approximation and not to the an exact PDF and also because the values of $\widehat M$ and $\widehat \delta$ used for the histogram surrogate are ``close'' to the corresponding values used to determine the approximation \eqref{proposed_f}.

\begin{figure}[h!]
\centerline{
\includegraphics[scale=0.35]{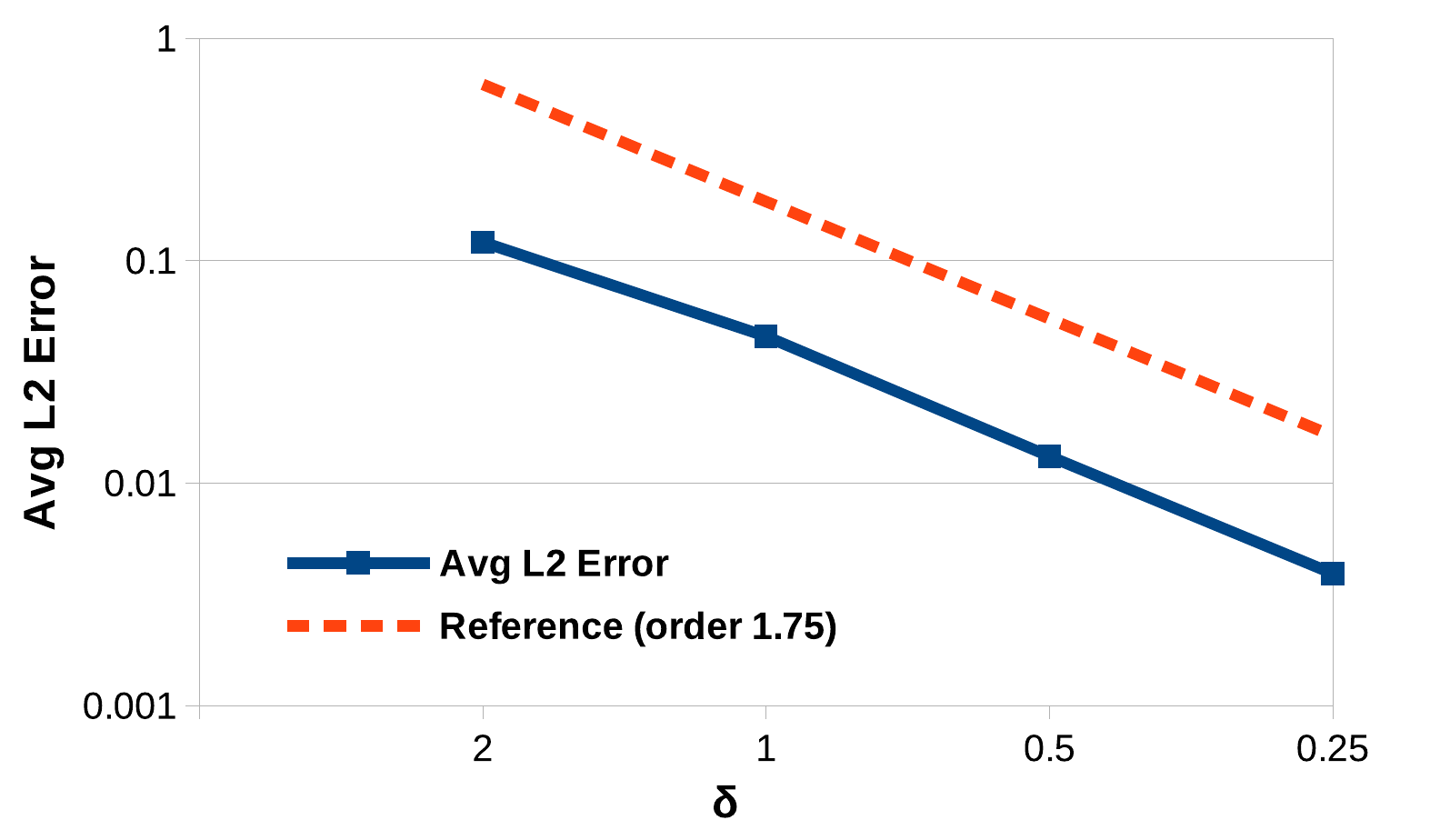} 
\quad
\includegraphics[scale=0.35]{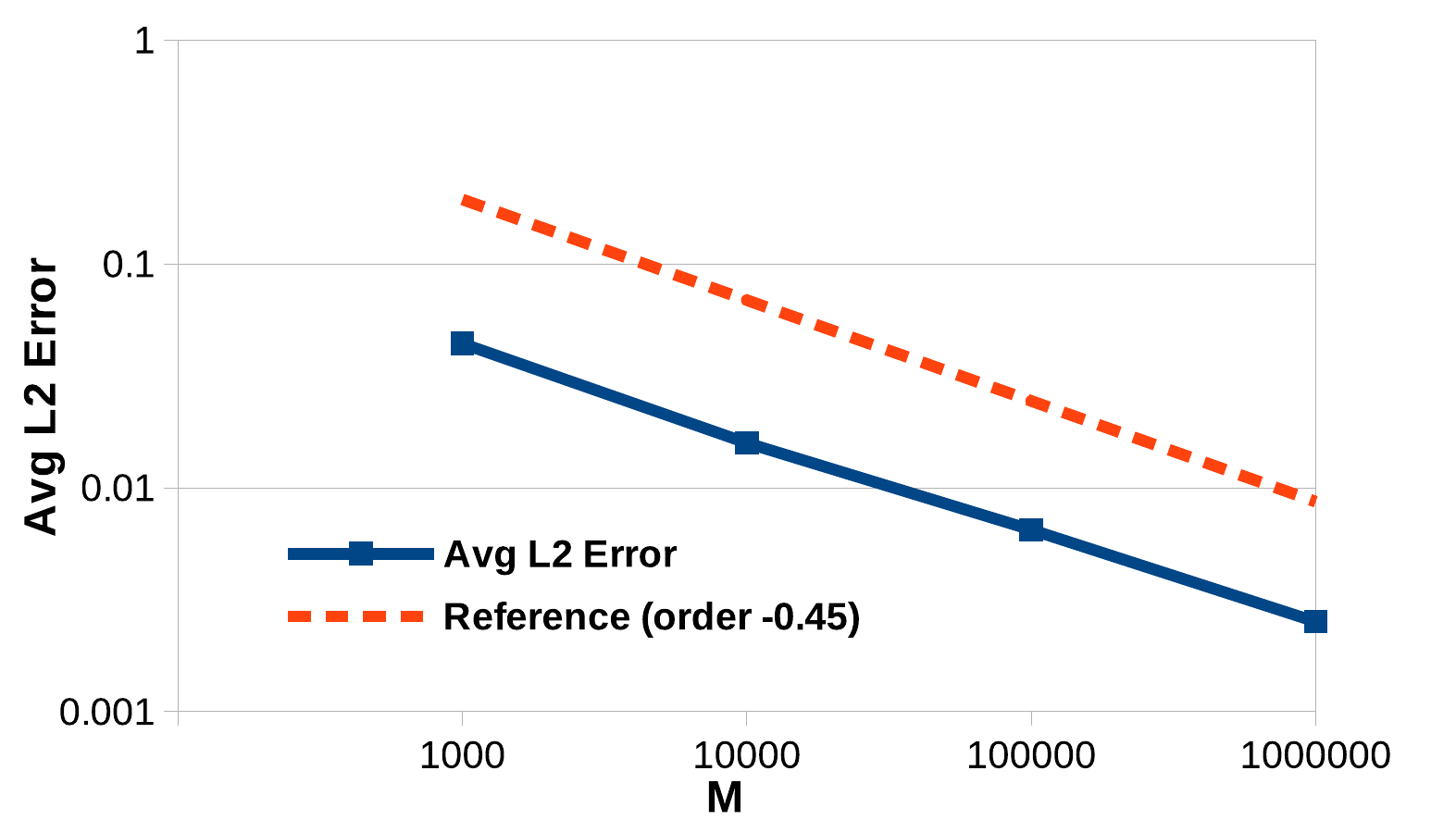}
}
 \caption{\em Errors and convergence rates for the for the approximation \eqref{proposed_f} to the output PDF $f_{output}(Y)$ for the output of interest \eqref{QoI}. Left: convergence with respect to $\delta$ with $M =10^7$ fixed. Right: convergence with respect to $M$ with $\delta = 0.0625$ fixed.}
\label{order_sgm}
\end{figure}

Further results about errors and convergence rates are given in Figure \ref{errSGM} for which \eqref{ns1} with $r=2$ is used to relate $\delta$ and $M$. The histogram used for comparison to estimate errors is obtained with $\widehat\delta = 0.125$ and  $\widehat M=16^6$ samples. The results in this figure are consistent with those of Figure \ref{order_sgm}. 

\begin{figure}[h!]
\centerline{
\includegraphics[scale=0.35]{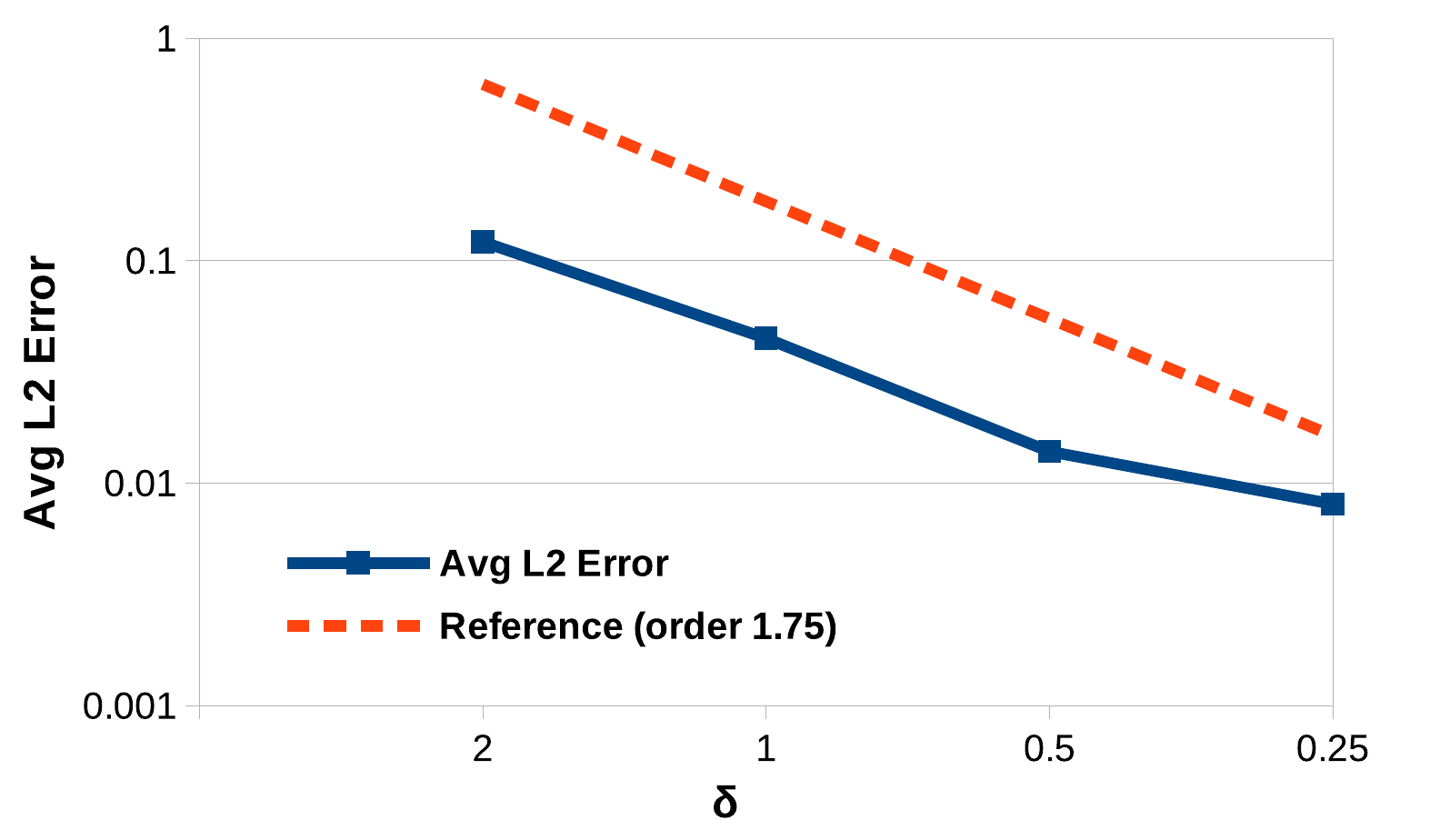}
\quad 
\includegraphics[scale=0.35]{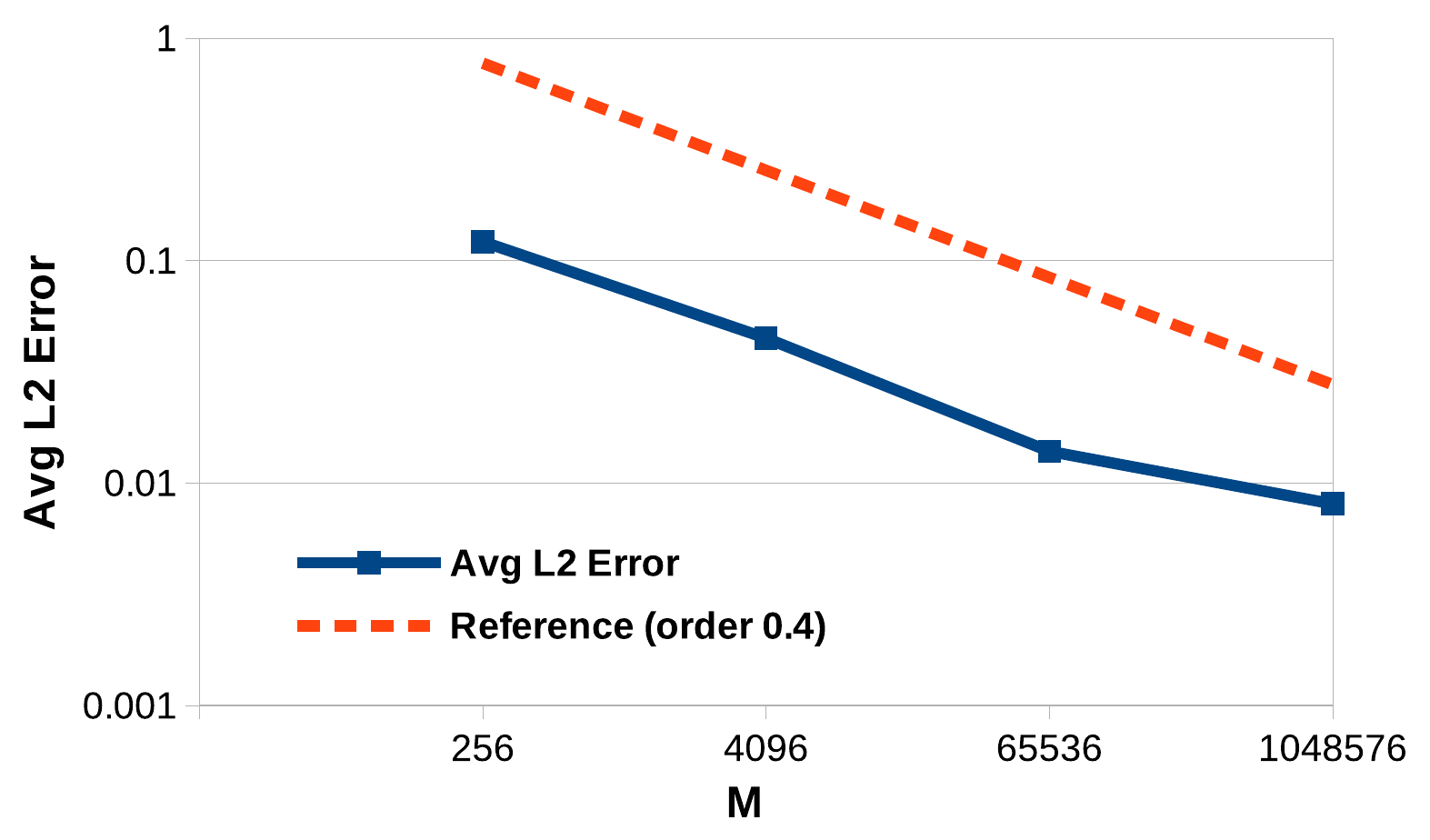}
} 
\caption{\em Errors and convergence rates for the for the approximation \eqref{proposed_f} to the output PDF $f_{output}(Y)$ for the output of interest \eqref{QoI} with $\delta$ and $M$ related through \eqref{ns1} with $r=2$.}
\label{errSGM}
\end{figure}

\vskip5pt
\underline{\em Output of interest \eqref{QoI2}.}
We next consider the approximation \eqref{proposed_f} of the PDF $f_{output}(Y)$ of the standardized output of interest $Y$ given by \eqref{QoI2}. The various inputs are the same as those used for the output of interest \eqref{QoI} except that now $\Gamma_{output} = [-3.5,4]$ and $\sigma_{\gamma}=2$. A plot of this output of interest as a function of the input random variables ${\bm Z}\in[-3.5.3.5]^2$ is given in the right plot of Figure \ref{qoi1}.
Plots of the approximate output PDF $f_{\delta,M}(Y)$ determined using \eqref{proposed_f} is given in Figure \ref{trial2}. For comparison purposes, plots of the histogram approximation $f^{histo}_{\widehat{\delta},\widehat{M}}(Y)$ determined using \eqref{histo_def} are also provided in that figure, but with larger sample size $M$ and smaller bin size $\delta$ compared to those used for $f_{\delta,M}(Y)$. Figure \eqref{errSGM2} provides plots of the errors in the approximation \eqref{proposed_f} determined through comparisons with histogram approximations determined with $\widehat\delta=0.1171875$ and $\widehat M=16^6$. Convergence rates of $1.75$ and $0.4$ are observed with respect to $\delta$ and $M$, respectively.  

\begin{figure}[h!]
\centerline{\includegraphics[scale=0.5]{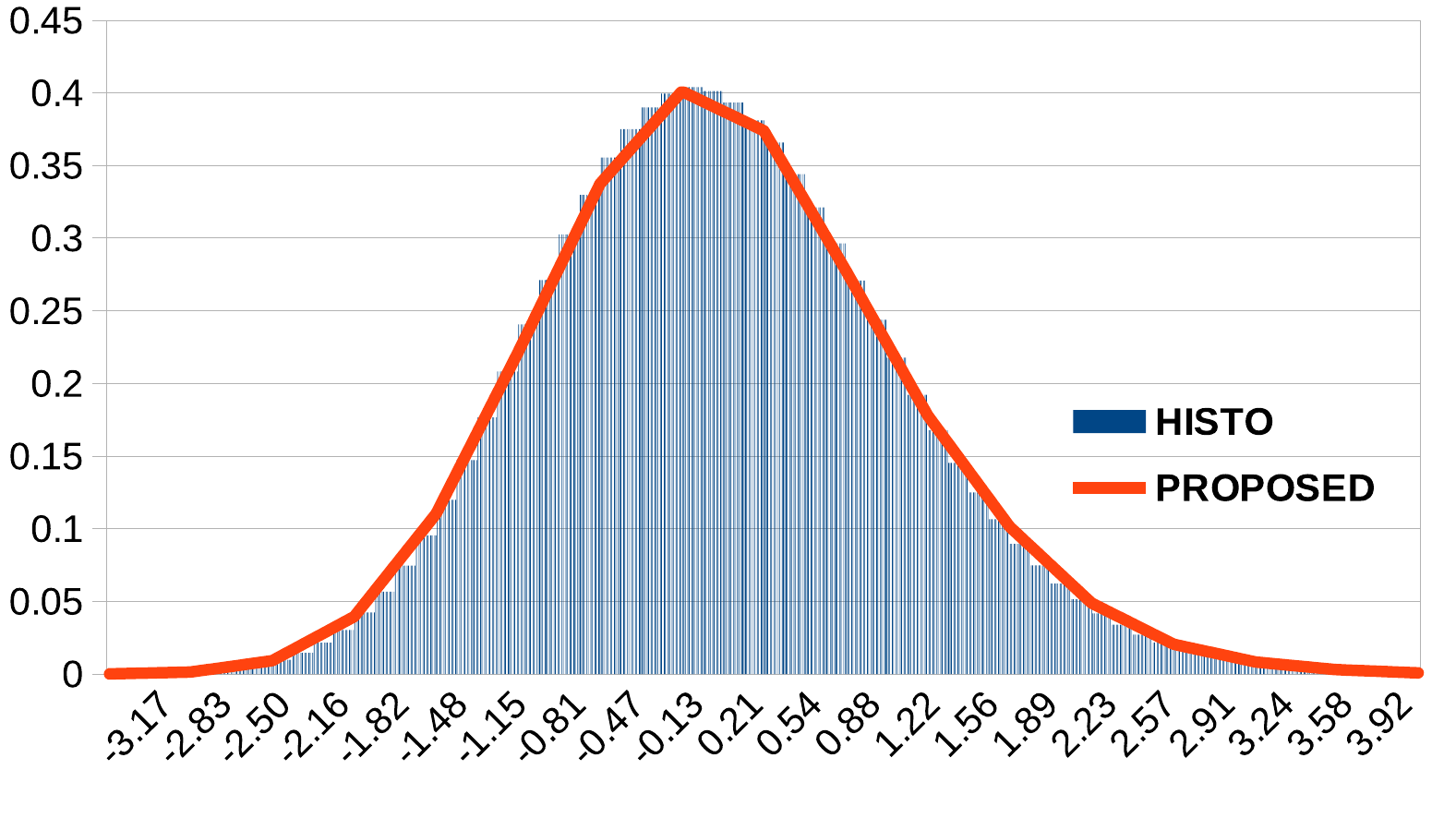}}

\centerline{\includegraphics[scale=0.5]{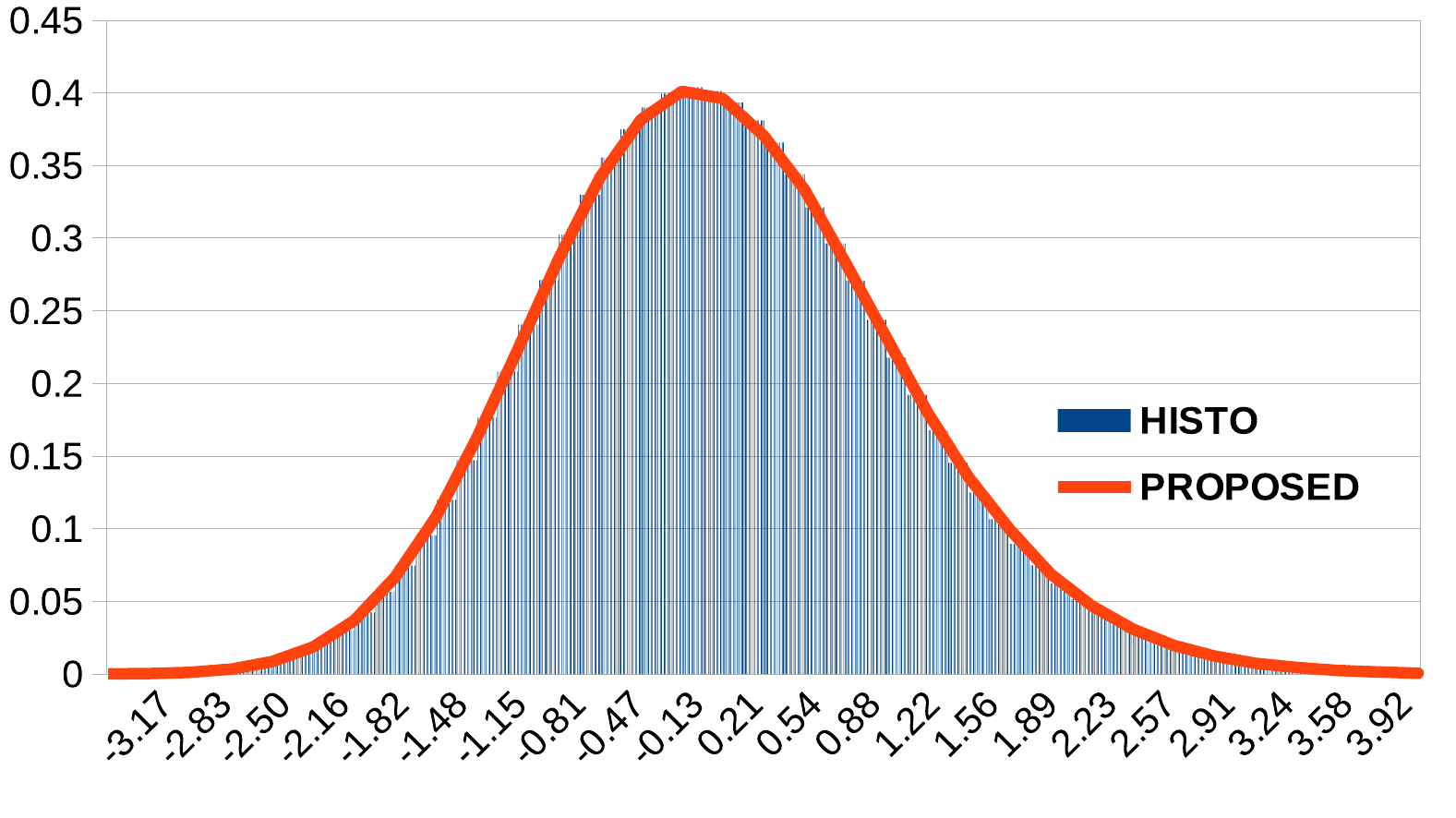}} 
\caption{\em For the coefficient \eqref{coeff} with $N_{input}=2$ and the output of interest \eqref{QoI2}, a comparison between the histogram approximation \eqref{histo_def} of the PDF $f_{output}(Y)$ with $\widehat\delta = 0.1171875$ and  $\widehat M=16^6$ and the approximation \eqref{proposed_f} with $\delta=0.46875$ and $M=16^4$ (top) and with $\delta=0.234375$ and $M=16^5$ (bottom).}
\label{trial2}
\end{figure}

\begin{figure}[h!]
\centerline{
\includegraphics[scale=0.35]{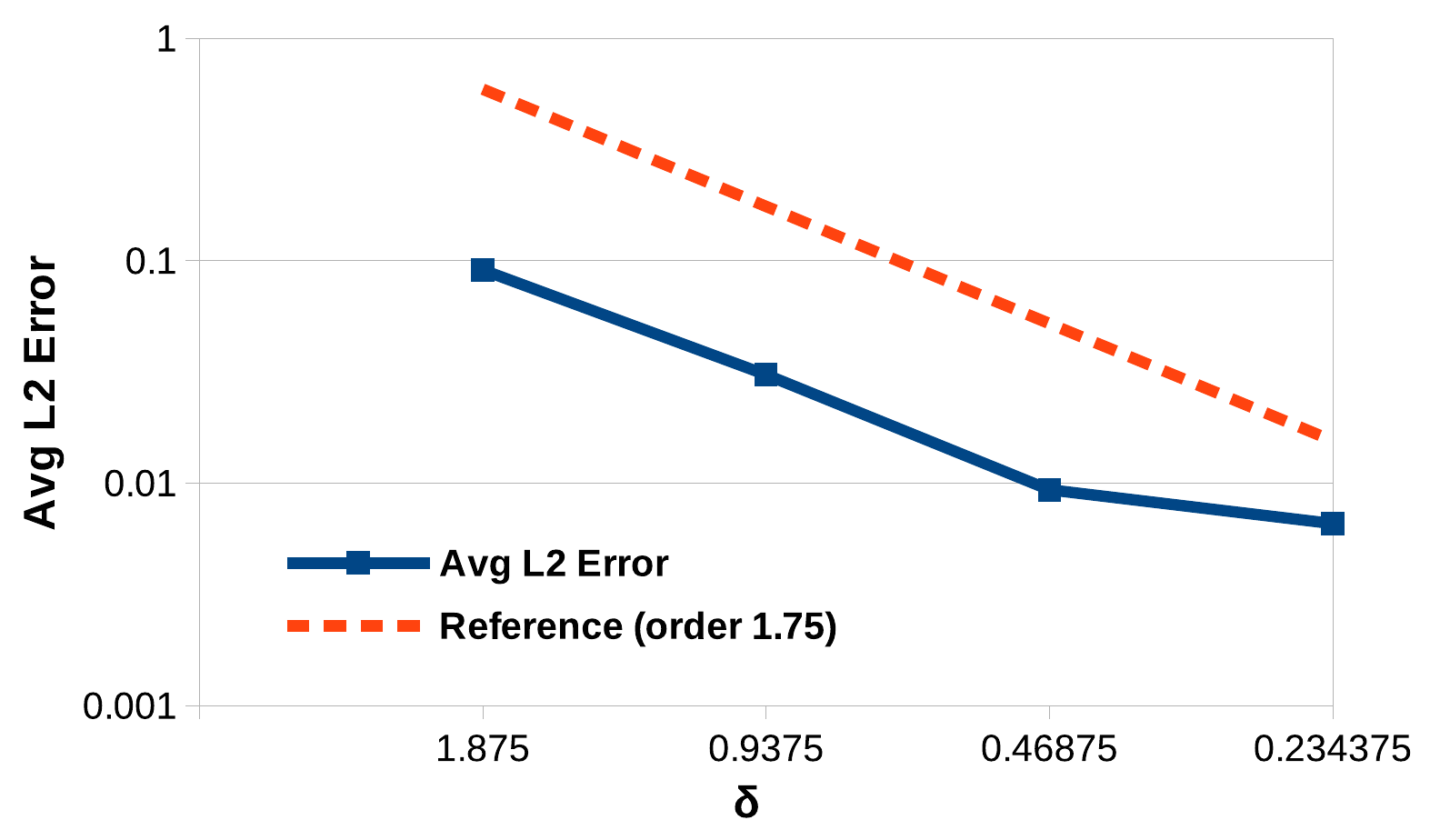}
\quad 
\includegraphics[scale=0.35]{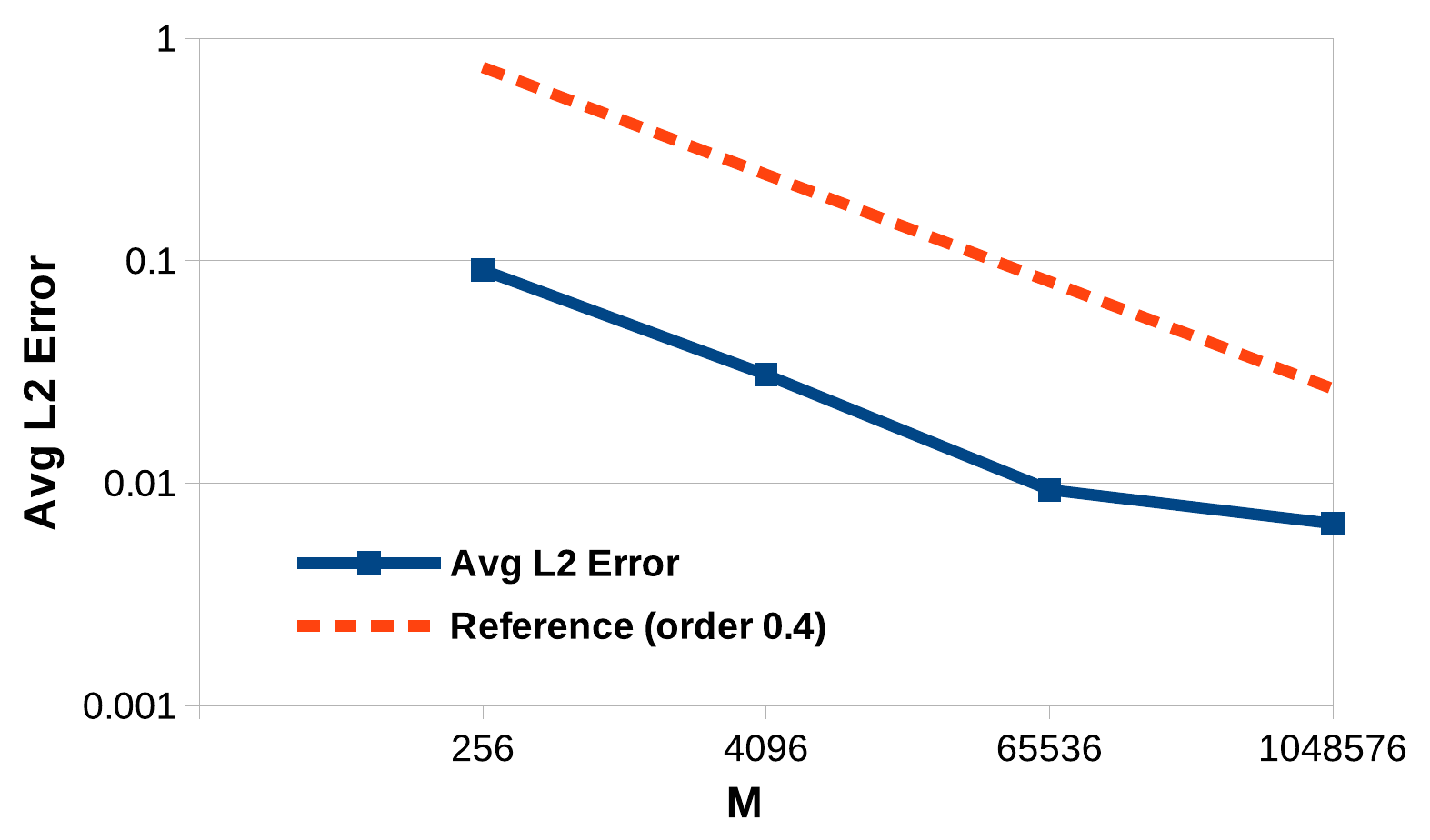}
} 
\caption{\em Errors and convergence rates for the for the approximation \eqref{proposed_f} to the output PDF $f_{output}(Y)$ for the output of interest \eqref{QoI2} with $\delta$ and $M$ related through \eqref{ns1} with $r=2$.}
\label{errSGM2}
\end{figure}

\section{Concluding remarks}\label{conclusions}

A {piecewise-linear density estimation method} \eqref{proposed_f} for the approximation of the PDF associated with a given set of samples is presented. The approximation is {naturally} scalable with respect to the sample size, is intrinsically positive, and has a unitary integral. It is also consistent, meaning that it converges to the exact PDF in the $L^2$ norm if the sample size goes to infinity and the bin size goes to zero. The construction of the approximation does not require the solution of a linear system and is fast even for a large number of samples. The computational time has been shown to scale linearly with the sample size. {Moreover, the binning size is the only smoothing parameter involved, and it can be related to the sample size by a simple rule.}

Future work would involve {strategies to extend the method to higher dimensions and decrease the computational time. For instance, the finite element basis functions may be replaced with sparse-grid basis functions in the way done in \cite{peherstorfer2014density}, in order to achieve scalability also with respect to the sample dimension.}  Computational speed-ups could be obtained by considering parallelization. Another means to speed up the computation would be to use adaptive refinement to coarsen the binning subdivision near the tails of the distribution. With this technique, fewer bins would have to be checked by the point locating algorithm of \cite{capodaglio2017particle} that we employ to locate a given point in the binning subdivision shared by several processors. In addition, the Monte Carlo sampling used in our method can be replaced by, e.g., quasi-Monte Carlo or sparse-grid sampling, resulting in efficiency gains.


\begin{thebibliography}{10}

\bibitem{andronova2001objective}
Natalia~G Andronova and Michael~E Schlesinger.
\newblock Objective estimation of the probability density function for climate
  sensitivity.
\newblock {\em Journal of geophysical research: atmospheres},
  106(D19):22605--22611, 2001.

\bibitem{babuvska2007stochastic}
Ivo Babu{\v{s}}ka, Fabio Nobile, and Raul Tempone.
\newblock A stochastic collocation method for elliptic partial differential
  equations with random input data.
\newblock {\em SIAM Journal on Numerical Analysis}, 45(3):1005--1034, 2007.

\bibitem{babuska2004galerkin}
Ivo Babuska, Ra{\'u}l Tempone, and Georgios~E Zouraris.
\newblock Galerkin finite element approximations of stochastic elliptic partial
  differential equations.
\newblock {\em SIAM Journal on Numerical Analysis}, 42(2):800--825, 2004.

\bibitem{babuvska2005solving}
Ivo Babu{\v{s}}ka, Ra{\'u}l Tempone, and Georgios~E Zouraris.
\newblock Solving elliptic boundary value problems with uncertain coefficients
  by the finite element method: the stochastic formulation.
\newblock {\em Computer methods in applied mechanics and engineering},
  194(12-16):1251--1294, 2005.

\bibitem{botev2010kernel}
Zdravko~I Botev, Joseph~F Grotowski, Dirk~P Kroese, et~al.
\newblock Kernel density estimation via diffusion.
\newblock {\em The annals of Statistics}, 38(5):2916--2957, 2010.

\bibitem{botev2011generalized}
Zdravko~I Botev and Dirk~P Kroese.
\newblock The generalized cross entropy method, with applications to
  probability density estimation.
\newblock {\em Methodology and Computing in Applied Probability}, 13(1):1--27,
  2011.

\bibitem{brenner2007mathematical}
Susanne Brenner and Ridgway Scott.
\newblock {\em The mathematical theory of finite element methods}, volume~15.
\newblock Springer Science \& Business Media, 2007.

\bibitem{capodaglio2017particle}
Giacomo Capodaglio and Eugenio Aulisa.
\newblock A particle tracking algorithm for parallel finite element
  applications.
\newblock {\em Computers \& Fluids}, 159:338--355, 2017.

\bibitem{capodaglio2018approximation}
Giacomo Capodaglio, Max Gunzburger, and Henry~P Wynn.
\newblock Approximation of probability density functions for {SPDE}s using
  truncated series expansions.
\newblock {\em arXiv preprint arXiv:1810.01028}, 2018.

\bibitem{ciarlet}
Philippe Ciarlet.
\newblock {\em The finite element method for elliptic problems}.
\newblock SIAM, 2002.

\bibitem{criminisi2012decision}
Antonio Criminisi, Jamie Shotton, Ender Konukoglu, et~al.
\newblock Decision forests: A unified framework for classification, regression,
  density estimation, manifold learning and semi-supervised learning.
\newblock {\em Foundations and Trends in Computer Graphics and Vision},
  7(2--3):81--227, 2012.

\bibitem{fan1994fast}
Jianqing Fan and James~S Marron.
\newblock Fast implementations of nonparametric curve estimators.
\newblock {\em Journal of computational and graphical statistics}, 3(1):35--56,
  1994.

\bibitem{frauenfelder2005finite}
Philipp Frauenfelder, Christoph Schwab, and Radu~Alexandru Todor.
\newblock Finite elements for elliptic problems with stochastic coefficients.
\newblock {\em Computer methods in applied mechanics and engineering},
  194(2-5):205--228, 2005.

\bibitem{gerber2014predicting}
Matthew~S Gerber.
\newblock Predicting crime using twitter and kernel density estimation.
\newblock {\em Decision Support Systems}, 61:115--125, 2014.

\bibitem{ghanem1991stochastic}
Roger~G Ghanem and Pol~D Spanos.
\newblock Stochastic finite element method: response statistics.
\newblock In {\em Stochastic Finite Elements: A Spectral Approach}, pages
  101--119. Springer, 1991.

\bibitem{gunzburger2014stochastic}
Max~D Gunzburger, Clayton~G Webster, and Guannan Zhang.
\newblock Stochastic finite element methods for partial differential equations
  with random input data.
\newblock {\em Acta Numerica}, 23:521--650, 2014.

\bibitem{hall1996accuracy}
Peter Hall and Matt~P Wand.
\newblock On the accuracy of binned kernel density estimators.
\newblock {\em Journal of Multivariate Analysis}, 56(2):165--184, 1996.

\bibitem{hegland2009finite}
Markus Hegland, Giles Hooker, and Stephen Roberts.
\newblock Finite element thin plate splines in density estimation.
\newblock {\em ANZIAM Journal}, 42:712--734, 2009.

\bibitem{heidenreich2013bandwidth}
Nils-Bastian Heidenreich, Anja Schindler, and Stefan Sperlich.
\newblock Bandwidth selection for kernel density estimation: a review of fully
  automatic selectors.
\newblock {\em AStA Advances in Statistical Analysis}, 97(4):403--433, 2013.

\bibitem{izenman1991review}
Alan~Julian Izenman.
\newblock Review papers: Recent developments in nonparametric density
  estimation.
\newblock {\em Journal of the American Statistical Association},
  86(413):205--224, 1991.

\bibitem{li2008fourier}
CF~Li, YT~Feng, DRJ Owen, DF~Li, and IM~Davis.
\newblock A {F}ourier--{K}arhunen--{L}o{\`e}ve discretization scheme for
  stationary random material properties in {SFEM}.
\newblock {\em International journal for numerical methods in engineering},
  73(13):1942--1965, 2008.

\bibitem{lopez2015multi}
Unai Lopez-Novoa, Jon S{\'a}enz, Alexander Mendiburu, Jose Miguel-Alonso,
  I{\~n}igo Errasti, Ganix Esnaola, Agust{\'\i}n Ezcurra, and Gabriel
  Ibarra-Berastegi.
\newblock Multi-objective environmental model evaluation by means of
  multidimensional kernel density estimators: Efficient and multi-core
  implementations.
\newblock {\em Environmental Modelling \& Software}, 63:123--136, 2015.

\bibitem{nobile2008anisotropic}
Fabio Nobile, Raul Tempone, and Clayton~G Webster.
\newblock An anisotropic sparse grid stochastic collocation method for partial
  differential equations with random input data.
\newblock {\em SIAM Journal on Numerical Analysis}, 46(5):2411--2442, 2008.

\bibitem{nobile2008sparse}
Fabio Nobile, Ra{\'u}l Tempone, and Clayton~G Webster.
\newblock A sparse grid stochastic collocation method for partial differential
  equations with random input data.
\newblock {\em SIAM Journal on Numerical Analysis}, 46(5):2309--2345, 2008.

\bibitem{peherstorfer2014density}
Benjamin Peherstorfer, Dirk Pfl{\"u}ge, and Hans-Joachim Bungartz.
\newblock Density estimation with adaptive sparse grids for large data sets.
\newblock In {\em Proceedings of the 2014 SIAM international conference on data
  mining}, pages 443--451. SIAM, 2014.

\bibitem{schevenels2004application}
Mattias Schevenels, Geert Lombaert, and Geert Degrande.
\newblock Application of the stochastic finite element method for {G}aussian
  and non-{G}aussian systems.
\newblock In {\em ISMA2004 International Conference on Noise and Vibration
  Engineering}, pages 3299--3314, 2004.

\bibitem{silverman2018density}
Bernard~W Silverman.
\newblock {\em Density estimation for statistics and data analysis}.
\newblock Routledge, 2018.

\bibitem{turlach1993bandwidth}
Berwin~A Turlach.
\newblock Bandwidth selection in kernel density estimation: A review.
\newblock In {\em CORE and Institut de Statistique}. Citeseer, 1993.

\bibitem{xie2008kernel}
Zhixiao Xie and Jun Yan.
\newblock Kernel density estimation of traffic accidents in a network space.
\newblock {\em Computers, environment and urban systems}, 32(5):396--406, 2008.

\bibitem{xu2015estimating}
Xiaoyuan Xu, Zheng Yan, and Shaolun Xu.
\newblock Estimating wind speed probability distribution by diffusion-based
  kernel density method.
\newblock {\em Electric Power Systems Research}, 121:28--37, 2015.

\bibitem{zivkovic2006efficient}
Zoran Zivkovic and Ferdinand Van Der~Heijden.
\newblock Efficient adaptive density estimation per image pixel for the task of
  background subtraction.
\newblock {\em Pattern recognition letters}, 27(7):773--780, 2006.

\end{thebibliography}
\bibliographystyle{plain}

\end{document}